\documentclass[10pt]{amsart} 

\newtheorem{definition}{Definition}[section]
\newtheorem{lemma}[definition]{Lemma}
\newtheorem{theorem}[definition]{Theorem}
\newtheorem{proposition}[definition]{Proposition}
\newtheorem{corollary}[definition]{Corollary}
\newtheorem{remark}[definition]{Remark} 
\numberwithin{equation}{section}
\newcommand \be     {\begin{equation}}
\newcommand \ee     {\end{equation}}
\newcommand \RR     {\mathbb{R}}
\newcommand \RN     {\mathbb{R}^N}
\newcommand \eps    {\epsilon}  
\newcommand \del    {{\partial}}

\newcommand \Bzero 	{\Bcal_{\delta_0}}
\newcommand \Bone 	{\Bcal_{\delta_1}}
\newcommand \Btwo 	{\Bcal_{\delta_2}}

\newcommand \Acal   {{\mathcal A}} 
\newcommand \Bcal   {{\mathcal B}} 
\newcommand \Ccal   {{\mathcal C}} 
\newcommand \Dcal   {{\mathcal D}} 
\newcommand \Jcal   {{\mathcal J}} 
\newcommand \Fcal   {{\mathcal F}}
\newcommand \Hcal   {{\mathcal H}}
\newcommand \Ical   {{\mathcal I}}
\newcommand \Kcal   {{\mathcal K}}
\newcommand \Lcal   {{\mathcal L}}
\newcommand \Ncal   {{\mathcal N}}
\newcommand \Rcal   {{\mathcal R}}
\newcommand \Scal   {{\mathcal S}} 
\newcommand \Tcal   {{\mathcal T}}
\newcommand \Ucal   {{\mathcal U}}
\newcommand \Wcal   {{\mathcal W}}
\newcommand \lam    \lambda 
\newcommand \Lam    \Lambda 
\newcommand \lamb   {\overline{\lambda}}  
\newcommand \Lamb   {\overline{\Lambda}}  
\newcommand \lb {\overline l} 
\newcommand \vs     {{\mathbf s}}
\newcommand \va     {{\mathbf a}}
\newcommand \vv     {{\mathbf v}}
\newcommand \vw     {{\mathbf w}}
\newcommand \ubf    {{\mathbf u}}
\newcommand \vn     {{\mathbf n}}
\newcommand \fbf    {{\mathbf f}}
\newcommand \gbf    {{\mathbf g}}

\begin{document}
\bibliographystyle{plain}
\title[Why many theories of shock waves are necessary]
{Why many theories of shock waves are necessary. Kinetic relations for nonconservative systems
}
\author[C. Berthon, F. Coquel, and P.G. L{\tiny e}Floch]{Christophe Berthon$^1$, Fr\'ed\'eric Coquel$^2$,
\\ 
\and 
\\
Philippe G. L{\smaller e}Floch$^2$
}
\thanks{Completed on May 30, 2010. Revised on February 15, 2011. To appear in Proc. Royal Soc. Edinburgh.
\newline 
$^1$ Laboratoire de Math\'ematiques Jean Leray, Centre National de la Recherche Scientifique,  
Universit\'e de Nantes, 2 rue de la Houssini\`ere, BP 92208, 44322 Nantes, France.
\newline 
Email: {\tt Christophe.Berthon@math.univ-nantes.fr.}
\newline
$^2$ Laboratoire Jacques-Louis Lions, Centre National de la Recherche Scientifique,   
Universit\'e Pierre et Marie Curie (Paris 6), 75252 Paris, France.
\newline 
Email: {\tt Coquel@ann.jussieu.fr, pgLeFloch@gmail.com.}
\newline 
\textit{\ AMS Subject Class.} 35L60, 76B15, 74N20. 
\textit{Key Words and Phrases.} Nonconservative hyperbolic system, kinetic relation, Riemann problem, 
multi-pressure Navier-Stokes equations, traveling wave.}
\date{}

\begin{abstract}
For a class of nonconservative hyperbolic systems of partial differential equations
endowed with a strictly convex mathematical entropy, 
we formulate the initial value problem by supplementing the equations with a kinetic relation
prescribing the rate of entropy dissipation across shock waves. 
Our condition can be regarded as a generalization to nonconservative systems of 
a similar concept introduced by Abeyaratne, Knowles, and Truskinovsky 
for subsonic phase transitions and 
by LeFloch for nonclassical undercompressive shocks to nonlinear hyperbolic systems. 
The proposed kinetic relation for nonconservative systems turns out to be equivalent,
for the class of systems under consideration at least, 
to Dal~Maso, LeFloch, and Murat's definition based on a prescribed family of Lipschitz continuous paths. 
In agreement with previous theories, the kinetic relation should be derived from a phase plane analysis 
of traveling solutions associated with an augmented version of the nonconservative system.   
We illustrate with several examples that nonconservative systems arising in the 
applications fit in our framework, and for a typical model of turbulent fluid dynamics, 
we provide a detailed analysis of the existence and properties of traveling waves which yields the 
corresponding kinetic function.  
\end{abstract}
\maketitle


\section{Introduction}
\label{1-0} 

Certain nonlinear hyperbolic models arising in continuum physics and, especially, models 
describing complex fluid flows do not take the standard form of conservation laws 
but, instead, are nonlinear hyperbolic systems in nonconservative form
\be
\del_t u + A(u) \, \del_x u = 0, \qquad \, x \in \RR, \quad t \geq 0. 
\label{1.1}
\ee 
Here, $u=u(x,t) \in \Omega$ is an unknown field taking values in a (convex and open) domain $\Omega \subset \RN$,  
while the matrix-valued field $A=A(u)$ is given and, for each state $u$, admits 
$N$ real and distinct eigenvalues. 
It is well-known that nonlinear hyperbolic equations do not admit smooth solutions since propagating discontinuities 
arise in finite time even from smooth initial data. For conservative systems, weak solutions in the sense of distributions are sought for. However, for nonconservative systems \eqref{1.1}, the distributional definition does not apply.
A suitable notion of weak solution was proposed Dal~Maso, LeFloch, and Murat \cite{DLM}
and the nonlinear stability of such solutions was investigated therein. Nonconservative hyperbolic systems 
have been the subject of active research in the past fifteen years. The theory 
covers the definition of weak solutions 
\cite{Volpert,LeFloch-CPDE,LeFloch-IMA,DLM,LeFloch-JHDE}, 
the existence of solutions to the Riemann problem \cite{LeFloch-CPDE,DLM}
the initial value problem \cite{LeFloch-CPDE,LeFlochLiu,CrastaLeFloch},  
the uniqueness of bounded variation solutions \cite{BLP,LeFloch-book}, 
and their approximation via finite difference schemes \cite{HouLeFloch,CLMP,LM}.  
In addition, many nonconservative models arising in continuum mechanics have been 
systematically investigated, 
as such model play an important role in the modeling of multi-phase flows and turbulent fluid dynamics
\cite{AudebertCoquel,BerthonCoquel-2006,BerthonCoquel1, BerthonCoquel2,ChalonsCoquel-2006, ChalonsCoquel-2007}. 

Building upon the above works, our purpose in the present paper
is to consider a restricted class of nonconservative systems of the form \eqref{1.1}, 
characterized by the property that a large family of additional entropy functions (conservation laws) 
is also available. In other words, the systems to be considered below {\sl formally} have a conservative form 
if nonlinear combinations of the given equations are allowed. However, the physical modeling
dictate that nonconservative equations should be kept and it is precisely under these conditions that 
a ``kinetic relation'', as we propose in the present paper, should enter into play. 

The kinetic relation were initially introduced by LeFloch~\cite{LeFloch-review} for hyperbolic 
systems of conservation laws in order handle nonclassical undercompressive shocks, 
following earlier works by Abeyaratne, Knowles \cite{AbeyaratneKnowles}, and Truskinovsky 
\cite{Truskinovsky} for subsonic phase transitions. 
See \cite{LeFloch-ARMA,LeFloch-review, LeFloch-book, LeFloch-rev2} for details.  

The concept of a kinetic relation for nonconservative systems discussed now 
was actually introduced first by the authors in an unpublished manuscript~\cite{BerthonCoquelLeFloch}. 
Later on, this concept was investigated numerically 
by Aubert, Berthon, Chalons, and Coquel \cite{AudebertCoquel,BerthonCoquel2,ChalonsCoquel-2007}, 
and the control of the numerical dissipation of finite difference schemes was extensively addressed. 
Our purpose in the present paper is to provide the required theoretical framework
and demonstrate that the kinetic relation provides an efficient tool to handle 
complex fluids.

Recall that the design and the properties of difference schemes suitable for the numerical approximation of 
nonconservative systems \eqref{1.1} is very challenging. 
The main source of difficulty lies in the fact that shock waves to nonconservative systems are 
small-scale dependent and the dissipation terms induced by the numerical discretization tend to 
drive the propagation of the shocks. This phenomena was rigorously analyzed for scalar equations 
by Hou and LeFloch \cite{HouLeFloch}. On the other hand we emphasize that Glimm scheme and front-tracking algorithms 
do not contain any numerical dissipation and, actually, have been proven to converge to the correct 
solutions \cite{LeFloch-CPDE,LeFlochLiu,LeFloch-book}. The method
based on the kinetic relation proposed in the present paper
allows one to extend to nonconservative systems the conclusions made 
 for nonclassical shocks in \cite{HayesLeFloch1.5, LM} (and the references cited therein).

We begin with a general discussion of nonconservative hyperbolic systems arising in continuum physics 
in order to motivate our general approach 
proposed in the forthcoming section and developed on selected examples in the rest of this paper. 
The models of interest here naturally stand in a nonconservative form, and this is a direct consequence of 
simplifying assumptions 
which
are made in the derivation of these models; these assumptions are also necessary if a tractable model is to be found. 
Such assumptions typically originate in averaging procedures that intend to bypass the description 
of intricate mechanisms taking place at microscopic scales. 
The small scale fluctuations that are thought to be of lesser interest induce dissipative and/or relaxation phenomena at the macroscopic level, and can also be accounted for as source-terms. 

Most (if not all) nonconservative hyperbolic models arising in the applications
admit (several distinct) entropy balance laws 
which are consistent with the underlying dissipative and relaxation mechanisms. 
These additional balance equations, as we will show, provide a natural approach to formulating 
additional generalized jump conditions built from entropy rate productions.
Moreover, these entropy functions are sufficient in number to allow 
for a complete set of jump relations.

The objectives and results in this paper are as follows: 

First of all, as mentioned above, we restrict attention to a class of nonconservative systems
(defined in Section~\ref{RI-0} below)  
which encompasses, however, most of the models encountered in the applications. 
To motivate the proposed class of systems, 
we observe that,  in the applications we have in mind (e.g.~multi-phase and multi-fluid models): 
(1)   all but one of the equations \eqref{1.1} can be rewritten in a 
conservative form and, 
moreover, 
(2) the system \eqref{1.1} is endowed with a mathematical entropy, i.e.~a 
(strictly convex) nonlinear function $U=U(u)$ corresponding to an additional conservation law
satisfied by all smooth solutions. 

 For such systems, the concept of weak solutions introduced by Dal~Maso, LeFloch, and Murat \cite{DLM} 
can be simplified. Therein, a family of Lipschitz continuous paths was necessary to uniquely define 
the nonconservative product $A(u) \, \del_x u$ 
associated with the vector-valued field $u$. 
In contrast, for our particular class of nonconservative systems, one nonconservative product 
between scalar-valued functions, only, must be defined. 
This structure allows us to simply supplement the model \eqref{1.1} with an additional algebraic
scalar equation which, for each shock wave,
determines the {\sl entropy dissipation rate} associated with the entropy $U$. 
We call this additional jump condition a {\sl kinetic relation} 
and the entropy dissipation function a {\sl kinetic function.}  
In Section~\ref{RI-0} below, a precise definition is given. 
The main result of this section is a proof of the existence
of a solution to the Riemann problem for \eqref{1.1} which satisfies the prescribed kinetic relation. 
Our proof is a generalization of an argument given in Dal Maso, LeFloch and Murat \cite{DLM} 
in the setting of general families of paths.

It is remarkable that many models of interest arising in the applications 
take the form considered in Section~\ref{RI-0} below, and 
this will be illustrated in the following Section~\ref{PH-0}. 
In Section~\ref{MP-NS}, we focus on a model of particular importance, which arises in turbulent fluid dynamics. 
Taking into account the dissipation terms induced by the physical modeling, the existence and properties of 
associated traveling waves are established. In Section~ we characterize the right-hand states of traveling waves
with fixed left-hand states, which leads us to the desired kinetic relation. 
In turn, this provides us with the kinetic function 
needed to apply the general theory in Section~\ref{RI-0}.


\section{Nonconservative systems in fluid dynamics}
\label{PH-0}

To show the structure of the nonconservative systems of interest, 
it is worth to begin with the {\sl shallow water equations with topography} 
\be
\label{eq1sh}
\aligned
\del_t \rho + \del_x (\rho v) & = 0,
\\
\del_t (\rho v) + \del_x \big( \rho v^2 + g \, \frac{\rho ^2}{2} \big) - g \rho \, \del_x a & = 0,
\endaligned
\ee
where $\rho$ and $v$ are the mass density and the velocity of the fluid, respectively, 
and the (prescribed) topography function $a:\RR \to \RR_+$ depends on the spatial variable $x$
and is assumed to be solely piecewise Lipschitz continuous. Here, $g$ is the gravity constant. 
The product $g \rho \, \del_x a$ is nothing but a nonconservative product which is not defined in a classical sense
at points of discontinuity.  

By setting $\ubf := (\rho, \rho v)$, weak solutions should obey the following entropy inequality 
\be
\label{eqenerstar}
\aligned
& \del_t \Ucal(\ubf,a) + \del_x \Fcal(\ubf,a) \leq 0,
\\ 
& \Ucal(\ubf,a) := \rho E(v) + \rho a, \qquad e'(\rho)= \frac{p(\rho)}{\rho^2}, 
\\
& \Fcal(\ubf, a) := \rho \, \frac{v^3}{2} + \rho e(\rho) \, v + p(\rho)v + \rho v \, a.
\endaligned
\ee

Another model with a closely related structure is 
\be
\label{eq2s}
\aligned
\del_t (a\rho) + \del_x (a \rho v) =0,
\\
\del_t (a\rho v) + \del_x (a \rho v^2 + a \, p(\rho) ) - p(\rho) \, \del_x a =0,
\endaligned
\ee
which describes {\sl one-dimensional nozzle flows}  
as well as {\sl compressible flows in porous media.}  
Again, the function $a: \RR \to \RR$ is solely piecewise Lipschitz continuous,  
and denotes here the nozzle cross-section or the porosity function, respectively. 

By setting $\ubf := (a \rho, a\rho v)$, weak solutions to \eqref{eq2s} should obey the entropy inequality 
\be\label{eqentrop}
\aligned
& \del_t \Ucal(\ubf,a) + \del_x \Fcal(\ubf,a) \leq 0,
\\
& \Ucal(\ubf,a) = a^2 \rho \frac{v^2}{2} + a \rho \, e(\rho),
\\
&\Fcal(\ubf,a) = \big( \Ucal(\ubf,a) + p(\rho) \big) \, v.
\endaligned
\ee

The systems \eqref{eq1sh} and \eqref{eq2s} and closely related models with source-terms 
have received considerable attention over the past decade, 
from, both, analytical and numerical standpoints. We refer the reader to  \cite{LeFloch-IMA} (connection with the theory of nonconservative systems), 
\cite{GreenbergLeroux, GreenbergLeroux2, AudusseBouchutBristeauKleinPerthame, CLMP, Bouchut,GHS,Jin} 
(approximation by finite difference or finite volume schemes), 
and \cite{Liu82, Liu87, IsaacsonTemple, GoatinLeFloch, LT1,LT2} (construction of a Riemann solver). In particular, we refer the reader 
to Bouchut \cite{Bouchut} for a comprehensive review and to the references therein. 

We observe here that both models \eqref{eq1sh} and \eqref{eq2s} fall within the following class
of nonconservative hyperbolic models with singular source-term 
\be
\label{eq3s}
\del_t \ubf + \del_x \fbf(\ubf,a) - \gbf(\ubf,a)\del_x a=0, 
\ee
where $a$ is a given (piecewise Lipschitz continuous) function of the spatial variable $x$
and the unknown map $\ubf$ takes values in a convex and open domain $\Omega_{\ubf}\subset\RN$, 
while $\fbf: \Omega_{\ubf} \times \RR \to \RN$ and $\gbf:\Omega_{\ubf}\times\RR\to\RN$ are 
given smooth mappings.

Motivated by the structure of the above two examples, especially the entropy inequalities 
\eqref{eqenerstar} and \eqref{eqentrop} and in order to develop a general theory 
we assume that the hyperbolic system \eqref{eq3s} is endowed with 
a (sufficiently smooth) entropy function $\Ucal:\Omega_{\ubf} \times \RR \to \RR$ 
and a corresponding entropy flux $\Fcal:\Omega_{\ubf}\times\RR\to\RR$, 
so that solutions to \eqref{eq3s} satisfy the entropy inequality 
\be
\label{eq5s}
\del_t\Ucal(\ubf,a) + \del_x\Fcal(\ubf,a) \leq 0.
\ee
The principal examples of interest arising in the form \eqref{eq3s} in the applications do admit such an entropy.

The above class is known to include, after the seminal work by Greenberg and Leroux 
\cite{GreenbergLeroux}, the class of hyperbolic systems with source terms:
\be
\label{eq4s0}
\del_t \ubf+\del_x \fbf(\ubf) = \gbf(\ubf), 
\ee
which, by introducing the (rather trivial function) $a(x)=x$,  indeed take the form (cf.~\eqref{eq3s})
\be
\label{eq4s}
\del_t \ubf + \del_x \fbf(\ubf) - \gbf(\ubf) \, \del_x a=0. 
\ee
This nonconservative reformulation is useful for designing
``well-balanced schemes'', which properly account for the competition between the source term 
and the differential hyperbolic operator in the large time asymptotic $t \to +\infty$. 
(See \cite{GreenbergLeroux,GreenbergLeroux2,Bouchut}.)  
The (somewhat artificial but useful) system \eqref{eq4s}
admits a conserved entropy in the scalar case $n=1$, {\sl provided} the source $g$ does not vanish, 
namely it suffices to define 
$\Ucal'(u):=1/g(u)$ and $\Fcal'(u):= f'(u)/g(u)$.  


As advocated by LeFloch \cite{LeFloch-IMA} for the equations for nozzle flows \eqref{eq2s}, 
the prescribed function $a$, being independent of the time variable, can be regarded 
as an independent unknown of the following extended version of \eqref{eq3s}  
in the extended variable $u := (\ubf, a)$
\be
\label{eq5s1}
\aligned
\del_t \ubf + \del_x \fbf(\ubf,a) - \gbf(\ubf,a) \, \del_x a
& = 0, 
\\
\del_t a & = 0. 
\endaligned
\ee

Assuming from now on that the matrix $D_{\ubf}\fbf(\ubf,a)$ is diagonalizable for all $\ubf\in\Omega_{\ubf}$ 
and $a\in\RR$  
with real eigenvalues 
$\lambda_1(\ubf,a)$,\ldots, $\lambda_n(\ubf,a)$
and a full set of eigenvectors $r_1(\ubf,a)$,\ldots, $r_n(\ubf,a)$, 
it is obvious that \eqref{eq5s1} 
admits  
the same eigenvalues plus $0$ (with multiplicity $1$).
Moreover, it admits a full set of eigenvectors {\sl if and only if}
$\lambda_j(\ubf,a)\not=0$ for all $j=1,\ldots, n$. 
In general, \eqref{eq5s1} is solely weakly hyperbolic and, due to possible resonance effects, 
difficulties arise even in tackling the simplest initial value problem, i.e.~the
Riemann problem; see the pioneering work of Isaacson and Temple \cite{IsaacsonTemple}, 
as well as Goatin and LeFloch \cite{GoatinLeFloch} for a general Riemann solver. 
In the rest of this paper, our assumptions will explicitly exclude the resonance effect 
 in solutions under consideration.

While a rigorous definition of nonconservative products will wait until the following section, we can here 
already provide some preliminary discussion, based on an observation by LeFloch \cite{LeFloch-IMA}
for the nozzle flow equations and on the presentation in Bouchut \cite{Bouchut} for more general systems. 

With this non-resonance assumption enforced, we then observe from \eqref{eq5s1} that the variable $a$ is a 
Riemann invariant associated with the eigenvalue $\lambda_{n+1}(\ubf,a):= 0$. 
In other words, $a$ stays constant across waves associated with any other (non-vanishing) eigenvalue 
and, consequently,
the nonconservative product $\gbf(\ubf,a)\del_x a$ only needs to be defined for 
$(n+1)$-contact discontinuities. 

The entropy inequality \eqref{eq5s} should be satisfied as an equality in 
the sense of distributions across standing waves, hence  
\be
\label{eq6s}
\Fcal(\ubf_+,a_+)-\Fcal(\ubf_-,a_-) = 0.
\ee
From the physical viewpoint, we can further investigate the validity of \eqref{eq6s}, obtained
as a direct consequence of the augmented form \eqref{eq5s1}. To that purpose, we
specialize \eqref{eq6s} first to the case of the shallow water equations \eqref{eq1sh}
\be
\label{eq7s}
\frac{m^2}{2\rho_+^2}+e(\rho_+)+\frac{p(\rho_+)}{\rho_+} + a_+ =
\frac{m^2}{2\rho_-^2} + e(\rho_-)+\frac{p(\rho_-)}{\rho_-} + a_-,
\ee
where $m = : \rho_- v_- = \rho_+ v_+$ denote the mass, and second in the case of the nozzle flow equations \eqref{eq2s} 
\be\label{eq8s}
\frac{m^2}{2a_+^2\rho_+^2}+e(\rho_+)+\frac{p(\rho_+)}{\rho_+} =
\frac{m^2}{2a_+^2\rho_-^2} + e(\rho_-)+\frac{p(\rho_-)}{\rho_-},
\ee
in which $m := a_- \rho_- v_- = a_+ \rho_+ v_+$. 
The above equations can be implicitly solved in $\rho_+$ away from resonance (see \cite{Bouchut,GoatinLeFloch,LT1,LT2} 
for details) and stand at the very basis of the design of well-balanced schemes.

Furthermore, in concrete experiments with fluid flows, for instance in nozzles, 
it is observed that an abrupt change (modeled therefore by a discontinuity) 
in either the topography function,
the duct cross-section, or the porosity function, generally 
produces fine-scale features in solutions which may enter in competition with 
complex dissipation phenomena such as friction. 
To account for such dissipation mechanisms, the entropy law \eqref{eqenerstar} or \eqref{eqentrop}
cannot be any longer expressed as a conservation law across the standing wave. 
The associated entropy dissipation rates are the so-called ``singular
loss of momentum'' used by engineers and  
well-documented in the applied literature. It is necessary, 
on the ground of physical experiments, to replace \eqref{eq8s} by the more general condition 
\be
\label{eq9s}
\aligned
& (a\rho v)_+ - (a\rho v)_- = 0
\\
& \Jcal = - (a\rho v)_- \, \kappa(\ubf_-,a_-), 
\endaligned
\ee
with 
$$
\Jcal := (a\rho v)_+ \, \Big( 
\frac{v_+^2}{2} +e(\rho_+)+\frac{p(\rho_+)}{\rho_+} 
\Big) 
- 
(a\rho v)_- \, 
\Big(
\frac{v_-^2}{2} +e(\rho_-)+\frac{p(\rho_-)}{\rho_-}
\Big), 
$$
where the prescribed function $\kappa:\Omega_{\ubf} \times \RR \to
\RR_+$ defines mathematically the singular loss of momentum. 
The extension relative to \eqref{eq7s} is completely similar.


We continue this section with a more sophisticated model of compressible flows,
describing {\sl multi-fluid mixtures,}
where the variable $a$ introduced previously now stands for a fluid mass fraction. 
Following the celebrated review by Steward and Wendroff \cite{SW}, (stratified) multi-fluid models may be 
regarded as two distinct fluids evolving
with distinct velocities and distinct thermodynamic properties, 
each propagating within ``nozzles'' whose cross sections  
denoted by $a \in (0,1)$ and $1-a(x,t)$, respectively depend on the
spatial as well as the time variables (see \cite{warnecke} for instance). 
For notational convenience, $a$ is traditionally denoted by $\alpha_1$ (void fraction of the fluid $1$) 
and $(1-a)$ by $\alpha_2$ (void fraction of the fluid $2$),
with 
$$
\alpha_1(x,t) + \alpha_2(x,t) = 1.
$$

Furthermore, the evolution of $a$ is now described 
either via an algebraic closure equation (based on the isobaric assumption; see \cite{SW} for details) 
or
by considering it as an independent variable governed by a supplementary evolution equation.  
From the point of view of the present paper and in order to avoid instability issues due
to lack of hyperbolicity of the model, 
we adopt the second strategy,
following here Ransom and Hicks \cite{RH} and Baer and Nunziato \cite{BaerNunziato}. 
This approach was extensively investigated in recent years; see Gallouet, H\'erard, and Seguin \cite{GHS}, 
Berthon and Nkonga \cite{BerthonNkonga}, and the references therein. 
  
In turn, the multi-fluid model under consideration takes the form
\be
\label{bifluid}
\aligned
\del_t \alpha_1 + V_I(\ubf) \del_x\alpha_1 
& = \lambda(p_2-p_1),
\\
\del_t (\alpha_1\rho_1) + \del_x (\alpha_1\rho_1 \, u_1)  
& =0,
\\
\del_t (\alpha_1\rho_1 \, u_1) + \del_x(\alpha_1\rho_1 \, u_1^2 + \alpha_1 \, p_1) 
   - P_I(\ubf) \, \del_x\alpha_1 
& = 
        \lambda \, (u_2-u_1) + \eps \, \del_x(\mu_1 \, \del_x u_1),
        \\
\del_t (\alpha_2 \rho_2) + \del_x( \alpha_2\rho_2 \, u_2)  
& =0,
\\
\del_t (\alpha_2\rho_2 u_2) + \del_x(\alpha_2\rho_2 u_2^2 + \alpha_2 \, p_2) - P_I(\ubf) \, \del_x\alpha_2 
& = 
        - \lambda \, (u_2-u_1) + \eps \, \del_x(\mu_2 \, \del_x u_2),
\endaligned
\ee
where $\ubf := (\alpha_1,\alpha_1 \rho_1,\alpha_1\rho_1 u_1,\alpha_2 \rho2,\alpha_2\rho_2 u_2)$
is the vector-valued unknown. 
Here, 
the barotropic pressure laws $p_i=p_i(\rho_i)$ are
 assumed to satisfy the monotonicity condition $p_i'(\rho_i)>0$.
The relaxation parameter $\lambda>0$ may take arbitrarily large values, 
depending of the multi-fluid flow regime under consideration, 
while 
$\eps>0$ (the inverse of a Reynolds number) is usually small. 
Moreover, the (smooth) functions $V_I:\Omega_{\ubf}\to\RR$ and $P_I:\Omega_{\ubf}\to \RR$
represent the interfacial velocity and interfacial pressure, respectively. 
Following the original work by Ransom and Hicks \cite{RH}, one can set {\sl for instance}
\be
\label{choic}
V_I(\ubf) := \frac{1}{2}(u_1+u_2), 
\qquad
P_I(\ubf) := \frac{1}{2}(p_1+p_2). 
\ee

It turns out that, independently of the precise form of the constitutive equations,
the system \eqref{bifluid} admits five real eigenvalues, i.e. 
$$
V_I(\ubf), \quad u_i\pm c_i(\rho_i), 
$$
where $c_i^2(\rho_i) :=p'(\rho_i)>0$ (as well as a basis of right eigenvectors) 
{\sl if and only if}
\be
\label{rvectors}
|V_I(\ubf)-u_i|\not= c_i(\rho_i),\quad i=1,2.
\ee
In other words, like the (much simpler) model \eqref{eq5s1}, the principal (first-order) part of \eqref{bifluid} 
is only weakly hyperbolic if \eqref{rvectors} is violated. 
Here again, we tacitly assume that solutions under consideration do not develop
resonance phenomena.

One key constraint that arises in choosing the required closure laws for $V_I(\ubf)$ and $P_I(\ubf)$ 
is the existence of a mathematical entropy pair associated with \eqref{bifluid}. 
Interestingly, 
the total energy 
$$
\Ucal := \alpha_1\rho_1 E_1(\ubf) + \alpha_2\rho_2 E_2(\ubf)
$$
with $E_i(\ubf) :=\frac{u_i^2}{2}+e_i(\rho_i)$ is an entropy for \eqref{bifluid} if and only if
the interfacial closure laws $V_I(\ubf)$ and $P_I(\ubf)$ satisfy the 
{\sl interfacial compatibility condition}  
\be
\label{eqdeuxetoiles}
V_I(\ubf)(p_2-p_1) + P_I(\ubf) (u_2-u_1) = p_2 u_1 - p_1 u_2
\ee
for all states under consideration (see \cite{GHS} and \cite{cemracs} for instance). 
Indeed, under the assumption \eqref{eqdeuxetoiles},
smooth solutions of \eqref{bifluid} satisfy the entropy balance law 
\be
\label{en7} 
\aligned
& \del_t \Ucal(\ubf) + \del_x \Fcal(\ubf) = -\lambda(u_2-u_1)^2 - \lambda(p_2-p_1)^2 - \Dcal, 
\\
& \Ucal(\ubf) := (\alpha_1\rho_1 E_1(\ubf) + \alpha_2\rho_2 E_2(\ubf))
\\
& \Fcal(\ubf) := \Big( (\alpha_1\rho_1 E_1(\ubf)+\alpha_1 p_1)u_1 +
          (\alpha_2\rho_2 E_2(\ubf)+\alpha_2 p_2)u_2 \Big),
          \\
& \Dcal(\ubf) :=  \eps\mu_1(\del_x u_1)^2 + \eps\mu_2(\del_x u_2)^2
             - \eps \, \del_x(\mu_1\alpha_1\del_x u_1 + \mu_2\alpha_2\del_x u_2). 
\endaligned
\ee
The dissipation $\Dcal$ formally converges to a non-positive measure when $\eps \to 0$
and/or $\lambda \to +\infty$, so that in this limit we do have the entropy inequality 
\be
\label{ineq6}
\del_t \Ucal(\ubf) + \del_x \Fcal(\ubf) \leq 0. 
\ee


We conclude this section with another setting for complex
compressible materials which naturally gives rise to hyperbolic
equations with viscous perturbations in non conservation form. The
models under consideration may be regarded as natural extensions of the
usual Navier-Stokes equations. Such extensions make use of $N$
independent internal energies $(e_i)_{1\le i\le N}$ for governing $N$
independent pressure laws $(p_i(\tau,e_i))_{1\le i\le N}$. These PDE
models take the generic form:
\be
\label{s1_bis}
\aligned
\del_t \rho + \del_x(\rho u)
& = 0,
\\
\del_t(\rho u) + \del_x \Big( \rho u^2 + \sum_{i=1}^N p_i(\tau,e_i) \Big)
& =
    \eps \, \del_x \Big(\sum_{i=1}^{n}\mu_i(\tau,e_i)\del_x u \Big),
       \\
\del_t(\rho e_i) +\del_x(\rho u e_i) + p_i(\tau,e_i)\del_x u
& = 
   \eps \, \mu_i(\tau,e_i)(\del_x u)^2,
\endaligned
\ee
where $\rho>0$ denotes the density, $u\in{\mathbb R}$ is the velocity, and
$\tau=1/\rho>0$ is the specific volume. Here, $\eps>0$ stands the
inverse of the Reynolds number.

Several models from the Physics actually enter the proposed framework
and can be distinguished according to the precise definition of the
constitutive closure laws for the pressures and the
viscosities. Precise assumptions on the required state laws will be
addressed in Section~\ref{MP-NS} devoted to the analysis of the
traveling wave solutions of (\ref{s1_bis}).

Models from the plasma physics where the temperature of the electron
gases must be distinguished from the temperature of the other heavy
species, typically take the form (\ref{s1_bis}) with $N=2$ (see
\cite{CoquelMarmignon} for a presentation). Models from the physics of
compressible turbulent flows can also be seen to fall within the frame
of PDEs (\ref{s1_bis}). We refer the reader to
\cite{BerthonCoquel-2006,BerthonCoquel1,BerthonCoquel2,ChalonsCoquel-2006,ChalonsCoquel-2007}
for the mathematical and the numerical analysis of several models
ranging from two distinct internal energies, the so-called laminar and
turbulent ones, to $N>2$ different energies to account for a refined
description of the turbulent energy cascade. We also emphasize
that multi-fluid models, as those studied in \cite{BerthonNkonga},
enter the proposed framework.

In most if not all the applications to complex compressible materials,
the inverse of the Reynolds number $\eps$ modulating the strenght of
the viscous perturbation is a small parameter. Solutions of interest
therefore exhibit stiff zones of transitions, namely viscous shock
layers and boundary layers. Viscous shocks cannot be properly resolved
for mesh refinements of practical interest and we are thus led to
study the limit $\eps\to 0^+$ in the system (\ref{s1_bis}).

There exists several ways to tackle the system (\ref{s1_bis}) in the limit
$\eps\to 0^+$ depending on suitable change of variables. It can be
seen that (\ref{s1_bis}) can recast equivalently as:
\be\label{eq1_bis}
\Acal_0(\vw^\eps)\del_t \vw^\eps +\Acal_1(\vw^\eps)\del_x\vw^\eps = 
   \eps\del_x(\Dcal(\vw^\eps)\del_x \vw^\eps),
\ee
with $\Acal_0$ regular, or
\be\label{eq2_bis}
\del_t \ubf^\eps + \del_x \Fcal(\ubf^\eps) = 
   \eps \, \Rcal(\ubf^\eps,\del_x\ubf^\eps,\del_{xx}\ubf^\eps).
\ee
Namely in (\ref{eq1}), the diffusive operator writes in conservation
form while $\Acal_0(\vw)$ and $\Acal_1(\vw)$ are not Jacobian matrices
of some flux function. By contrast in (\ref{eq2_bis}), the first order
operator stands in conservation form but not the regularization
terms. 

The precise definitions of the change of unknown $\vw$ and $\ubf$ is addressed in
Section~\ref{MP-NS}. We just highlight at this stage that
concerning the equivalent form (\ref{eq1_bis}) and provided that suitable
estimates on the sequence of solutions $\vw_\eps$ hold true, the
right-hand side is expected to vanish in the limit $\eps\to 0^+$ in
the usual sense of the distributions. By contrast the left-hand side in non
conservation form may be handled thanks to the theory of family of
paths introduced by LeFloch \cite{LeFloch-IMA}, Dal Maso, LeFloch and
Murat \cite{DLM}. As far as the next equivalent form (\ref{eq2_bis} is
considered, the left-hand side now stands in conservation form and can
be treated in the usual sense of the distributions. In opposition, the
right-hand side cannot any longer be expected to converge to $0$,
generally speaking, but merely to a bounded Borel measure concentrated
on the shocks of the limit solutions. The next section provides a
convenient framework for handling the required passage to the limit
in the PDEs (\ref{eq2_bis}).


\section{Kinetic relations for nonconservative systems}
\label{RI-0}

Having in mind the examples described in the previous section, we present one of 
the main contributions of the present paper, i.e.~the {\sl concept of kinetic relations for nonconservative systems,}
 which 
allows us to 
rigorously define certain nonconservative products arising in the applications.
 
Recall that weak solutions to nonconservative systems are defined in the class of
functions with bounded 
variation (BV). By standard regularity theorems, such functions can be handled essentially 
as if they were piecewise Lipschitz continuous. Throughout the present paper and for simplicity in the presentation, 
we restrict attention to piecewise Lipschitz continuous functions and refer to \cite{DLM} for details
of the the DLM theory. 

For simplicity in the presentation, we restrict attention to solutions defined in a neighborhood of a constant state in $\RN$
which can be normalized to be the origin. 
We denote by $\Bzero$ the ball centered at the origin and of small radius $\delta_0>0$. 
Dal~Maso, LeFloch and Murat's definition is based on prescribing a  
{\sl family of Lipschitz continuous paths} $\phi=\phi(s;u_0, u_1) \in \Bzero$ ($s \in [0,1]$),  
which allows one to connect any two points $u_0, u_1$ in $\Bone$ for some $\delta_1 \leq \delta_0$. 
In particular, it is assumed that 
\be 
\phi(0; u_0, u_1) = u_0, \qquad  \phi(1; u_0, u_1) = u_1. 
\label{2.0b} 
\ee
(See \cite{DLM,LeFloch-JHDE} for the precise conditions, omitted in this short review.) 
As proposed in LeFloch \cite{LeFloch-IMA}, this family of paths should be determined from 
traveling wave solutions of an augmented model.

Indeed, it has been recognized that weak solutions $u$ of \eqref{1.1} depend on the effect of small scales 
that have been neglected at the hyperbolic level of modeling, but are taken into account in 
the augmented version 
\be
\del_t u + A(u) \, \del_x u = R(u, \eps \, u_x, \eps^2 u_{xx}, \cdots), 
\label{2.0}
\ee 
where $R^\eps =0$ if $\eps= 0$. The family of paths determined by traveling wave trajectories 
precisely yields the ``missing information'' required to set-up the hyperbolic theory. 

A (piecewise Lipschitz continuous) function $u=u(x,t)$ is called a {\sl weak solution} of 
the nonconservative system \eqref{1.1} 
if $u$ satisfies the equations \eqref{1.1} in a classical sense in the regions where it is Lipschitz continuous and, 
additionally, the following generalization of the Rankine-Hugoniot jump relation 
holds along every curve of discontinuity of $u$. Precisely, 
for any shock wave connecting two states $u_0, u_1$ at the speed $\Lamb= \Lamb(u_0,u_1)$, 
\be
u(x,t) = \begin{cases}
u_0,       &  x< \Lamb \, t, 
\\
u_1,       &  x> \Lamb \, t, 
\end{cases} 
\label{2.1} 
\ee
we impose the {\sl generalized jump relation}
\be
- \Lamb \, (u_1 - u_0) 
+ \int_0^1 A(\phi(s; u_0,u_1)) \, \del_s \phi(s; u_0,u_1) \, ds = 0. 
\label{2.2}
\ee
Note that, in the conservative case when $A(u) = Df(u)$ for some flux-function $f$, this relation reduces to 
$$
- \Lamb \, (u_1 - u_0) + f(u_1) - f(u_0) =0, 
$$
which is {\sl independent} of the paths $\phi$ and is nothing but the standard jump relation.  

Based on the above definition, one can solve  \cite{DLM} the Riemann problem for \eqref{1.1}, 
corresponding to the piecewise constant initial data 
\be 
u(x,0) = \begin{cases}
u_l,    &  x<0, 
\\
u_r,   &  x>0, 
\end{cases}
\label{2.3} 
\ee
where $u_l, u_r$ are  constants in $\Btwo$ with $\delta_2 \leq \delta_1$. This construction generalizes Lax's standard construction  
for conservative systems \cite{Lax1,Lax2}. 
Recall that (admissible) shock waves must be constraint by Lax shock inequalities 
(for some $j=1, \ldots, N$) 
\be
\lam_j(u_0) > \Lamb > \lam_j(u_1). 
\label{2.Lax}
\ee
 
The Riemann solver can then be used to design numerical schemes for the approximation 
of the general initial value problem, e.g.~Glimm or front tracking schemes. 

In certain applications, it has been found convenient to avoid introducing the whole family of paths $\phi$. 
It is precisely our
purpose in the present paper to introduce, for a particular class of nonconservative systems, 
a new definition of weak solutions, 
which imposes Rankine-Hugoniot jump relations
in the form of ``kinetic relations'' and 
does not require the knowledge of 
any ``internal structure'' for shock waves. 


We will assume that the nonconservative system under consideration {\sl formally} 
admits $N$ conservation laws, so we consider the system  
\be
\del_t u + \del_x f(u) = 0,  
\label{2.4} 
\ee
which consists of conservation laws valid for {\sl smooth} solutions, only.   
Our goal is to describe singular limits \eqref{2.0},   
where $R=R^\eps$  
is a {\sl nonconservative regularization.} 
Precisely, we are going to supplement \eqref{2.4} with $N$ jump relations, 
referred to as ``kinetic relation'', which determines the dynamics of shocks in weak solutions to \eqref{2.4}.  

We suppose that in an open and convex domain $\Ucal \subset \RN$ of the phase space, the system \eqref{1.1} is strictly hyperbolic, 
with eigenvalues $\lam_1(u) < \ldots < \lam_N(u)$
and basis of eigenvectors $l_i(u), r_i(u)$. 
Let $\Lcal \subset \RR$ be a compact set containing all speeds under consideration in the problem.

\begin{definition}
\label{A-2.1}  
A {\sl kinetic function} is a Lipschitz continuous map $\Phi:  \Ucal \times \Lcal \to \RN$
satisfying (for $j=1, \ldots, N$) 
\be 
\aligned 
& \Phi(u, \lam_j(u)) =0,  \qquad u \in \Ucal,
\\
& | l_j(u) \cdot \del_{\Lam} \, \Phi(u, \Lam) | 
         \leq  c_1 \, |\Lam - \lam_j(u)|, \qquad (u, \Lam) \in \Ucal \times \Lcal,   
\endaligned 
\label{2.6} 
\ee 
for some constant $c_1>0$. 
Given a a kinetic function $\Phi$, a piecewise Lipschitz solution $u=u(x,t) \in \Ucal$ 
is called a {\sl $\Phi$-admissible weak solution}   to \eqref{1.1} 
if the differential equations \eqref{2.4} are satisfied in each region of continuity of $u$ 
and moreover, 
along any curve of discontinuity of $u$, connecting some values $u_-,u_+$ at the speed $\Lam$, 
the following {\sl kinetic relation} holds 
\be
- \Lam \, (u_+ - u_-) + f(u_+) - f(u_-)  = \Phi(u_-, \Lam). 
\label{2.5} 
\ee
\end{definition}  
 
In certain applications, it may be more convenient to express the kinetic functions in terms of the left- and right-hand states, 
that is, $\Phi=\Phi(u_-,u_+)$. In the applications, the kinetic function $\Phi$ should be determined from traveling wave solutions
of a specific system \eqref{2.0} and should be thought of as a ``correction'' to the standard Rankine-Hugoniot 
relation. 

By introducing 
the Borel measure denoted by $\mu_u^\Phi$ that vanishes in the regions of continuity of $u$ 
and has the mass $\Phi(u_-, \Lam)$ along its curves of discontinuity, 
we easily see that 
Definition~\ref{A-2.1} is equivalent to the requirement (see \cite{LeFloch-ARMA}) 
\be 
\del_t u + \del_x f(u) =  \mu_u^\Phi,  
\label{2.5-b}
\ee
which is regarded as an equality between bounded measures. 
Note that we recover the usual conservative case by simply choosing both $\Phi$ and $\mu_u^\Phi$ to vanish identically. 
In general, $\mu_u^\Phi$ depends strongly on the function $u$. 

In the rest of this section we study the case of genuinely nonlinear systems. This
 assumption allows us to use the shock speed as a regular parameter along the (generalized) Hugoniot curve.

\begin{theorem} [Riemann problem for nonconservative systems with kinetic relations]
\label{A-2.2} 
Suppose that \eqref{2.4} is a strictly hyperbolic system in a neighborhood $\Bcal_{\delta_0}$ of 
the origin $0$ and admits genuinely nonlinear characteristic fields only, i.e. 
$$
(\nabla\lam_j \cdot r_j)(0) > 0, \qquad  j=1, \cdots, N. 
$$
Let $\Phi =\Phi(u,\Lam)$ be a (Lipschitz continuous) kinetic function defined in the neighborhood $\Bzero \times \Lcal$   
for some sufficiently small $\delta>0$ by 
$$
\Lcal := \bigcup_j \Lcal_j, 
\qquad 
\Lcal_j := (\lam_j(0) - \delta, \lam_j(0) + \delta).  
$$
Then, there exists $\delta_1 \leq \delta_0$ such that 
the Riemann problem \eqref{2.3}, \eqref{2.4} with data $u_l, u_r \in \Bone$
admits a unique $\Phi$-admissible weak solution in the class of piecewise smooth solutions 
consisting of a combination of rarefaction waves and shock waves satisfying the kinetic relation. 
Moreover, the corresponding wave curves are solely Lipschitz continuous.  
\end{theorem}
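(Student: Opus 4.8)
I plan to follow Lax's classical construction of the Riemann solution, replacing the Rankine--Hugoniot shock curves by \emph{generalized shock curves} built from the kinetic relation \eqref{2.5}, gluing these with the rarefaction curves into $N$ elementary wave curves, and showing that the composition of these curves is a (bi-Lipschitz) homeomorphism near $u_l$. Since $\Phi$ is only Lipschitz, every object below is handled in the Lipschitz category and the final step uses a Lipschitz inverse-function theorem rather than the classical one; this is exactly what forces the wave curves to be ``solely Lipschitz''.

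First, the rarefaction curves are unchanged from the conservative theory: for each $j$, the curve $\eps\mapsto R_j(\eps;u_-)$ ($\eps\geq 0$) is obtained by integrating $\del_\eps R_j=r_j(R_j)$, $R_j(0;u_-)=u_-$, is smooth, tangent to $r_j(u_-)$, and carries $\lam_j$ increasing by genuine nonlinearity. The heart of the argument is the generalized shock curve. Fixing $u_-$ near the origin and $j$, one must solve
\[
\mathcal{G}(u_+;u_-,\Lam):=f(u_+)-f(u_-)-\Lam\,(u_+-u_-)-\Phi(u_-,\Lam)=0
\]
for $(u_+,\Lam)$ near $(u_-,\lam_j(u_-))$, where $\mathcal{G}$ vanishes because $\Phi(u_-,\lam_j(u_-))=0$. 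The obstruction is that $\del_{u_+}\mathcal{G}$ at this base point equals $Df(u_-)-\lam_j(u_-)I$, which is singular with kernel spanned by $r_j(u_-)$ --- the same degeneracy as in the conservative case --- so the implicit-function theorem does not apply directly and I use a Lyapunov--Schmidt reduction. Writing $u_+-u_-=\eps\,r_j(u_-)+\psi$ with $\psi$ in the span of $\{r_k(u_-):k\neq j\}$, the projection of $\mathcal{G}=0$ onto $\{l_k(u_-):k\neq j\}$ has invertible linearization (strict hyperbolicity gives $\lam_k(u_-)\neq\lam_j(u_-)$), so the Lipschitz implicit-function theorem yields $\psi=\psi(\eps,u_-,\Lam)$, smooth in $\eps$ and Lipschitz in $\Lam$. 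The remaining scalar equation, the projection onto $l_j(u_-)$, takes the form
\[
\big(\lam_j(u_-)-\Lam\big)\,\eps+\tfrac12(\nabla\lam_j\cdot r_j)(u_-)\,\eps^2+O(\eps^3)=l_j(u_-)\cdot\Phi(u_-,\Lam),
\]
and here the structural bound $|l_j(u)\cdot\del_\Lam\Phi(u,\Lam)|\leq c_1|\Lam-\lam_j(u)|$ from \eqref{2.6} is decisive: it forces $|l_j(u_-)\cdot\Phi(u_-,\Lam)|\leq\tfrac{c_1}{2}|\Lam-\lam_j(u_-)|^2$, so that after dividing by $\eps$ the fixed-point map for $\Lam$ has Lipschitz constant $O(c_1|\eps|)$ and is a contraction for $\eps$ small. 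This produces a unique Lipschitz branch $\eps\mapsto(S_j(\eps;u_-),\Lam_j(\eps;u_-))$ with $S_j(0;u_-)=u_-$, $\Lam_j(0;u_-)=\lam_j(u_-)$ and $\del_\eps\Lam_j(0^+;u_-)=\tfrac12(\nabla\lam_j\cdot r_j)(u_-)\neq 0$; genuine nonlinearity then lets one use the shock speed as a regular parameter, and the Lax inequalities \eqref{2.Lax} single out one monotone branch (say $\eps\leq 0$).

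Next, for each $j$ concatenating the rarefaction branch ($\eps\geq0$) with the admissible shock branch ($\eps\leq0$) defines the $j$-th wave curve $v\mapsto W_j(\eps_j;v)$, a Lipschitz map of $(\eps_j,v)$ with $W_j(0;v)=v$ and, by choice of parameter, $l_j(v)\cdot\big(W_j(\eps_j;v)-v\big)=\eps_j$; it is smooth for $\eps_j>0$ but in general only Lipschitz, with a possible corner, at $\eps_j=0$. Setting $\mathcal{T}(\eps_1,\dots,\eps_N):=W_N(\eps_N;\cdot)\circ\cdots\circ W_1(\eps_1;u_l)$ one has $\mathcal{T}(0)=u_l$, and projecting $u_r-u_l=\sum_j\big(u^{(j)}-u^{(j-1)}\big)$ onto the left eigenvectors, using the normalization above and strict hyperbolicity, gives $\mathcal{T}$ a near-triangular structure $l_k(u_l)\cdot\big(\mathcal{T}(\eps)-u_l\big)=\eps_k+(\text{Lipschitz, small after localization})$. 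A quantitative Lipschitz inverse-function theorem then shows that, after shrinking $\delta_1\leq\delta_0$, $\mathcal{T}$ is a bi-Lipschitz homeomorphism from a neighborhood of $0$ onto a neighborhood of $u_l$; hence every $u_r\in\Bone$ determines a unique $(\eps_1,\dots,\eps_N)$, and reading off the waves --- automatically ordered by speed since the speed ranges $\Lcal_j$ are disjoint in the small neighborhood --- produces the unique $\Phi$-admissible piecewise-smooth Riemann solution, with wave curves that are only Lipschitz.

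The main obstacle is the generalized-shock-curve construction of the second step: the jump relation has a genuinely degenerate linearization at the base point, so one cannot invoke the implicit-function theorem, and because $\Phi$ is merely Lipschitz there is no bootstrap to higher regularity; the Lyapunov--Schmidt splitting combined with the sharp quadratic estimate on $l_j\cdot\Phi$ coming from \eqref{2.6} is precisely what makes the scalar fixed point converge and yields the Lipschitz (but no better) shock curve. A secondary difficulty is the concluding inversion of $\mathcal{T}$, where the lack of $C^1$ wave curves rules out the usual inverse-function theorem and the near-triangular structure inherited from the normalization $l_j\cdot(\text{jump})=\eps_j$, together with strict hyperbolicity, must be used to apply its Lipschitz counterpart.
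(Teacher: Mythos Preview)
Your proposal is correct and follows essentially the same strategy as the paper: construct the generalized Hugoniot curve by a Lipschitz implicit-function argument whose key input is the quadratic bound $|l_j\cdot\Phi|\le \tfrac{c_1}{2}|\Lam-\lam_j|^2$ coming from \eqref{2.6}, then defer the remainder (admissible branch, gluing with rarefactions, composition of wave curves) to the classical Lax/DLM construction. The paper organizes the reduction slightly differently --- it first solves for the speed $\Lamb(u_0,u_1)$, working inside a cone $|l_i\cdot(u_1-u_0)|\ge C_*|\Lam-\lam_i(u_0)|$ and using the averaged matrix $A(u_0,u_1)=\int_0^1 Df(u_0+m(u_1-u_0))\,dm$, and then applies the implicit function theorem to the remaining $N-1$ projected equations --- whereas you perform a Lyapunov--Schmidt split first (solving for the transverse part $\psi$) and treat the scalar speed equation last. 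These are dual orderings of the same two-step elimination, and both buy the same conclusion: a Lipschitz shock curve tangent to $r_i(u_0)$.

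One small correction: in your scalar fixed-point for $\Lam$, the Lipschitz constant on the natural region $|\Lam-\lam_j(u_-)|\lesssim|\eps|$ is $O(c_1)$, not $O(c_1|\eps|)$ (the factor $|\eps|$ in the bound on $|l_j\cdot\partial_\Lam\Phi|$ is cancelled by the $1/\eps$ from your division). Hence the contraction requires $c_1$ sufficiently small rather than $\eps$ small; the paper makes the same tacit smallness assumption on $c_1$ in its proof.
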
  

Clearly, under the assumptions of the above theorem, \eqref{2.6} implies that, for $j=1, \ldots, N$, 
\be
|l_j(u) \cdot \Phi(u, \Lam)| \leq c_2 \, |\Lam - \lam_j(u)|^2, \qquad (\lam, u) \in \Ucal \times \Lcal_j 
\label{2.kinetic}
\ee
for some $c_2>0$. 

\begin{proof} We want to generalize the proof given in \cite{DLM} for general families of paths; 
see also the related proof in \cite{HayesLeFloch2} for nonclassical shocks.
We are going to show that the given set of jump conditions \eqref{2.5} suffices to determine a (generalized) 
Hugoniot curve uniquely, and we will investigate whether its tangency and regularity 
properties. The rest of the proof (selection of the admissible part of the Hugoniot curve, 
actual construction of the wave curves, Riemann solution) then follows as in \cite{DLM} and will be omitted.   

We denote by $\lamb_j(u_0,u_1)$ and $\lb_j(u_0, u_1)$ the eigenvalues and left-eigenvectors of the 
averaged matrix 
$$
A(u_0, u_1) := \int_0^1 Df(u_0 + m \, (u_1 - u_0)) \, dm. 
$$
In a neighborhood of the point $(u_0, \lam_j(u_0))$, we consider the kinetic relation 
\be
G(\Lam, u_1) := - \Lam \, (u_1 - u_0) + f(u_1) - f(u_0) 
- \Phi(u_0, \Lam) =0. 
\label{2.7} 
\ee
Fix some index $i$ and let us restrict attention to the (nonlinear) cone-like region $K$ determined by the two conditions 
on $u_1 \in \Bcal_{\delta_1}$ 
$$
\aligned 
& \bigl|(u_1 - u_0) \cdot l_i(u_0, u_1)\bigr| \geq C_*  \, |\Lam - \lam_i(u_0)|, 
\\
& |u_1 - u_0|  + |\Lam - \lam_i(u_0)| < \delta_2, 
\endaligned 
$$
where a condition on $C_*>0$ will be imposed below. Observe that $G(u_0, \lam_i(u_0)) = 0$.

Multiplying the generalized jump relation \eqref{2.7}  by $\lb_i(u_0, u_1)$ we find 
$$
\aligned 
0 
& = \lb_i(u_0, u_1)\cdot \bigl(A(u_0,u_1) - \Lam \bigr) \, (u_1 - u_0) 
      - \lb_i(u_0, u_1) \cdot \Phi(u_0, \Lam) 
\\ 
& =  \bigl(\lamb_i(u_0, u_1) - \Lam \bigr)  \, \lb_i(u_0, u_1)\cdot (u_1 - u_0) 
       - \lb_i(u_0, u_1) \cdot \Phi(u_0, \Lam). 
\endaligned 
$$
Therefore, we can express the shock speed $\Lam= \Lamb(u_0, u_1)$ in the form 
\be
\label{her}
0 = \Lamb - \lamb_i(u_0, u_1) + {\lb_i(u_0, u_1)\cdot \Phi(u_0, \Lamb)
                 \over  \lb_i(u_0, u_1)\cdot (u_1 - u_0)} =: \Omega(u_1, \Lamb). 
\ee

Now, observe that the function $\Omega$ satisfies 
$$
\aligned 
\Big| {\del \Omega \over \del\lam} (u_1, \Lamb) - 1 \Big|
& = {\lb_i(u_0, u_1)\cdot \del_\lam \Phi(u_0, \Lamb)  \over \lb_i(u_0, u_1) \cdot (u_1 - u_0)}
\\
& \geq - c_2 \, O(1) \, {|u_1 - u_0 | + |\Lamb - \lamb_i(u_0)|  \over | \lb_i(u_0, u_1) \cdot (u_1 - u_0)|}, 
\endaligned 
$$ 
where the constant $O(1)$ depends only on the flux. Hence, we have 
$$
{\del \Omega \over \del\Lam} (u_1, \Lamb)
=
1 + {c_2 \over C_*} \, O(1),
$$ 
which is positive provided $c_1$ is sufficiently small. 
As a consequence, the implicit function for Lipschitz continuous mappings 
applies and shows that the implicit equation \eqref{her} determine the shock speed $\Lamb= \Lamb(u_0,u_1)$
uniquely.

Next, we consider the remaining components, corresponding to $j \neq i$: 
$$
H(u_0, u_1) :=  l_j(u_0, u_1)\cdot (u_1 - u_0) 
         - {l_j(u_0, u_1)\cdot \Phi(u_0, \Lamb) \over \Lamb(u_0,u_1) -  \lam_j(u_0,u_1)}. 
$$
Denoting by $L(u_0)$ the $N \times (N-1)$ matrix of vectors $l_j(u_0)$ for $j \neq i$,  
we can compute the differential of $H$, as follows: 
$$
\aligned 
{DH \over Du_1} (u_0, u_1) 
=  L(u_0) 
&+ O(1) \, |u_1 - u_0| + O(1) C_1 {|\Lamb(u_0,u_1) -  \lam_i(u_0)|^2 \over |\Lamb(u_0,u_1) -  \lam_j(u_0)| }
\\
& + O(1)\,  C_1 |\Lamb(u_0,u_1) -  \lam_i(u_0)| \, \left|{\del\Lamb \over \del u_1}(u_0,u_1)\right| 
\\  
& +  O(1) \, C_1  |\Lamb(u_0,u_1) -  \lam_i(u_0)|^2 \, \left|{\del\Lamb \over \del u_1}(u_0,u_1)  
- {1 \over 2} \, \nabla \lam_i(u_0)\right|, 
\endaligned 
$$ 
where we used that $\Lamb(u_0,u_1) -  \lam_j(u_0,u_1)$ is bounded away from $0$. 
Hence, we find  
$$
{\del H \over \del u_1} (u_0,u_1) = L(u_0) + o(1) + o(1)\, \left|{\del\Lamb \over \del u_1}(u_0,u_1)\right|.  
$$  

On the other hand,  the $u_1$-derivative of the shock speed satisfies 
$$
\aligned 
& {\del \Lamb \over \del u_1}(u_0,u_1)  
\\
& = {1 \over 2} \, \nabla \lam_i(u_0) + O(1) {|\lam -  \lam_i(u_0)|^{b+1} \over |l_i(u_0,u_1) \cdot (u_1- u_0)| }
\\
& \quad  +  O(1) \, {|\Lamb -  \lam_i(u_0)|^2 \over |l_i(u_0,u_1) \cdot (u_1- u_0)|^2 } 
      + O(1) {|\Lamb -  \lam_i(u_0)| \over |l_i(u_0,u_1) \cdot (u_1- u_0)| } \, 
{\del \Lamb \over \del u_1}(u_0,u_1),   
\endaligned 
$$
which shows that 
$$
{\del \Lamb \over \del u_1}(u_0,u_1) = {1 \over 2} \, \nabla \lam_i(u_0) + o(1). 
$$ 

In conclusion, ${\del H \over \del u_1} (u_0,u_1) = L(u_0) + o(1)$, and the implicit function 
theorem applies to the set of equations $H(u_0,u_1)=0$,
 which therefore  
determines a unique shock curve $s \mapsto u_1 = u_1(s;u_0)$, defined locally near $u_0$. 
Near the base point $u(0) = u_0$, 
the tangent of this curve is defined almost everywhere 
and, due to the smallness of the constant $c_1$ in \eqref{2.6}, 
takes its values in a small neighborhood of the eigenvector $r_i(u_0)$. 
\end{proof}


We now introduce a class of nonconservative system to which the framework in the previous
subsection can be applied. 
We assume that  
the first $N-p$ equations in \eqref{1.1} take a conservative form 
while the remaining $p$ equations are nonconservative. In other word, we set $u=(v,w)$ and we consider the nonconservative systems 
\be
\aligned 
& \del_t v + \del_x g(v,w) = 0, \\
& \del_t w + B(v,w)\, \del_x v + C(v,w) \, \del_x w =0.  
\endaligned 
\label{2.6bb}
\ee
Here $g=g(v,w) \in \RR^{N-p}$ while $B=B(v,w), C=C(v,w)$ are $p\times (N-p)$ and 
$p \times p$ matrix-valued mappings, respectively. 

It must be stressed that the assumption made here refers directly 
to the set of equations listed in \eqref{1.1} or to {\sl linear} combinations
of them. Of course, nonlinear functions of the original variable $u$  
cannot be considered at this level of the analysis, in general,
since discontinuous solutions are sought. 

Our second assumption is the existence of $p$ mathematical entropy 
pairs. That is, we assume that there exist $k$ strictly convex 
functions $U_k=U_k(v,w)$ together with their associated flux $F_k=F_k(v,w)$ such that 
\be
\del_t U_k(v,w) + \del_x F_k(v,w) =0, \qquad k=1, \cdots, p,  
\label{2.7b} 
\ee
holds for all {\sl smooth} solutions to \eqref{2.6}. We search for solutions satisfying the {\sl entropy inequality} 
\be
\del_t U_k(v,w) + \del_x F_k(v,w) \leq 0,   \qquad k=1, \cdots, p. 
\label{2.8}
\ee
Many of the models of interest take the form \eqref{2.6}--\eqref{2.8}. 

\begin{definition}
Nonlinear hyperbolic systems in nonconservative form that have the structure \eqref{2.6bb}, 
admit 
at least $p$ mathematical entropies, and satisfy the non-degeneracy condition
\be
det\left(\nabla_w U_1(v,w), \cdots, \nabla_w U_p(v,w)\right) 
\neq 0
\label{2.12}
\ee
are called {\rm nonconservative systems endowed with a full set of entropies.} 
\end{definition}


We now focus on the entropy dissipation associated with the entropies  
$U_k$. The basic idea is to replace the nonconservative
equations in the system \eqref{2.6} with conservative 
equations for the entropy dissipation but the latter involving a measure
source-term. Of course it is necessary for $(v,w) \mapsto (v, U(v,w))$ to define a change of variable, say $U_{w} \neq 0$. 

Observe first that the inequality \eqref{2.8} implies a 
constrain on shock waves, i.e., with the notation introduced earlier in \eqref{2.1}, 
\be
\aligned 
E_k(\Lam; u_0, u_1) 
& := -\Lam \, \bigl(U_k(u_1) - U_k(u_0)\bigr) + F_k(u_1) - F_k(u_0) 
\\
& \leq 0,
\endaligned  
\label{2.9}
\ee
for all $k=1, \ldots p$. 
On the other hand, the first $N-p$ equations in (2.6) yield $N-p$ jump relations
in the fully explicit form 
\be
-\Lam \, (v_1 - v_0) + g(v_1, w_1) - g(v_0, w_0) = 0. 
\label{2.10}
\ee
Since $p$ jump relations are ``missing'', we supply it in the form 
\be
E_i(\Lam; u_0, u_1) = \Phi_i(\Lam; u_0) \leq 0,  \qquad i=1, \cdots p, 
\label{2.11}
\ee
which we refer to as a kinetic relation and 
where $\Phi$ is a given ``constitutive'' function, called a ``kinetic function'', to be determined case by case in the examples.

\begin{definition}
\label{A-2.3} Let $\Phi=(0, \ldots, 0, \Phi_1, \ldots \Phi_p)$ be a kinetic function.  
A piecewise Lipschitz continuous function $u=(v,w)$ is called a 
{\rm $\Phi$-admissible solution of the nonconservative system} 
\eqref{2.6} if it satisfies the equations in a classical sense in the 
regions of continuity and if each propagating discontinuity 
satisfies the $N-p$ jump relations \eqref{2.10} together with 
the kinetic relations \eqref{2.11}. 
\end{definition}

We reformulate the main result, in a slightly weaker form which is adapted to the present context, since 
it is natural to assume that the entropy dissipation is of cubic order near the base point.  

\begin{corollary}[Riemann problem for nonconservative systems endowed with a full set of entropies] 
\label{A-2.4} 
Consider a nonconservative system endowed with a full set of entropies.  
Suppose that the system is strictly hyperbolic and genuinely nonlinear in the neighborhood of some state $u_*= (v_*,w_*)$. 
Let $\Phi_i =\Phi_i(u_0, u_1)$ be a regular function 
defined in the neighborhood of each speed $\lam_j(u_*)$ 
for $j=1, \cdots, N$ and satisfying for all $u_0, u_1$ 
\be
\aligned
& \Phi_i(u_0, \lam_i(u_0)) = 0,  
\\
& \del_{\Lam} \Phi_i(u_0, \Lam) = O(1) \, (\Lam - \lam_j(u_0))^2,  
\endaligned 
\label{2.13}
\ee
where $O(1)$ denotes a positive and bounded function. 
Then, the corresponding Riemann problem admits a unique admissible solution 
in the class of piecewise smooth solutions consisting of 
a combination of rarefaction waves and admissible shock waves. 
\end{corollary}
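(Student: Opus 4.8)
The plan is to reduce Corollary~\ref{A-2.4} to a direct application of Theorem~\ref{A-2.2}. The key observation is that a nonconservative system endowed with a full set of entropies can be \emph{reformulated}, for the purposes of defining weak solutions and solving the Riemann problem, as a system in the conservative form \eqref{2.4} with an associated kinetic function of the type treated in Theorem~\ref{A-2.2}. First I would introduce the change of variables $(v,w) \mapsto (v, \Ucal)$ where $\Ucal := (U_1(v,w), \ldots, U_p(v,w))$; by the non-degeneracy condition \eqref{2.12} this is a local diffeomorphism near $u_*$, so we may regard $\tilde u := (v, \Ucal) \in \RN$ as a new unknown. In these variables, the first $N-p$ equations of \eqref{2.6bb} together with the $p$ entropy equations \eqref{2.7b} form a system that is in conservative form for smooth solutions, i.e.\ it takes the shape $\del_t \tilde u + \del_x \tilde f(\tilde u) = 0$ for an appropriate flux $\tilde f$ (obtained by composing $g$, the $F_k$, and the inverse change of variables). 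Strict hyperbolicity and genuine nonlinearity are preserved under this smooth change of variables near $u_*$, after the usual normalization translating $u_*$ to the origin.

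Next I would check that the kinetic relations \eqref{2.10}--\eqref{2.11} translate exactly into a kinetic relation of the form \eqref{2.5} for the transformed system. The $N-p$ jump relations \eqref{2.10} are already of conservative Rankine--Hugoniot type in the $v$-component, contributing $0$ to the right-hand side, while the $p$ relations \eqref{2.11}, written as $-\Lam(U_i(u_1) - U_i(u_0)) + F_i(u_1) - F_i(u_0) = \Phi_i(u_0,\Lam)$, are precisely the remaining components of $-\Lam(\tilde u_+ - \tilde u_-) + \tilde f(\tilde u_+) - \tilde f(\tilde u_-) = \tilde\Phi(\tilde u_-, \Lam)$ with $\tilde\Phi = (0,\ldots,0,\Phi_1,\ldots,\Phi_p)$ (after expressing $\Phi_i$ in the new variables via the diffeomorphism). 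It then remains to verify that $\tilde\Phi$ is a kinetic function in the sense of Definition~\ref{A-2.1}: the first condition $\tilde\Phi(\tilde u,\lam_j(\tilde u)) = 0$ follows from $\Phi_i(u_0,\lam_i(u_0)) = 0$ in \eqref{2.13} together with the fact that the nonconservative components vanish identically; the Lipschitz bound $|l_j \cdot \del_\Lam \tilde\Phi| \le c_1|\Lam - \lam_j|$ follows from the quadratic estimate $\del_\Lam\Phi_i(u_0,\Lam) = O(1)(\Lam - \lam_j(u_0))^2$ in \eqref{2.13}, which is in fact stronger than what Definition~\ref{A-2.1} requires. One should note here that \eqref{2.13} with the $O(1)$ constant controlled gives the needed smallness of $c_1$ automatically on a sufficiently small neighborhood; this matches the ``cubic order near the base point'' remark preceding the statement, since integrating the quadratic derivative bound yields $|\tilde\Phi| = O(|\Lam - \lam_j|^3)$ (cf.\ \eqref{2.kinetic} and beyond).

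With $\tilde\Phi$ recognized as a legitimate kinetic function for the strictly hyperbolic, genuinely nonlinear conservative system $\del_t\tilde u + \del_x\tilde f(\tilde u) = 0$, Theorem~\ref{A-2.2} applies directly and yields, for $u_l, u_r$ close enough to $u_*$, a unique $\tilde\Phi$-admissible weak solution of the Riemann problem built from rarefactions and kinetic shocks, with Lipschitz wave curves. Pulling this solution back through the inverse diffeomorphism $(v,\Ucal)\mapsto(v,w)$ produces the desired $\Phi$-admissible solution of the original nonconservative system \eqref{2.6bb} in the sense of Definition~\ref{A-2.3}: smoothness is preserved on regions of continuity, the jump relations \eqref{2.10}--\eqref{2.11} hold by construction, and uniqueness transfers since the change of variables is a bijection on the relevant neighborhood. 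The entropy inequalities \eqref{2.8} are then simply the Lax-type admissibility of the shocks together with the sign condition $\Phi_i \le 0$ already built into \eqref{2.11}.

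The main obstacle I anticipate is \emph{not} the application of Theorem~\ref{A-2.2} itself but the bookkeeping needed to make the reformulation legitimate: one must confirm that the transformed flux $\tilde f$ is well-defined and smooth (which requires $\nabla_w\Ucal$ invertible, i.e.\ exactly \eqref{2.12}), that strict hyperbolicity and the ordering of eigenvalues survive the change of variables, and — more delicately — that the speed set $\Lcal$ and the neighborhoods $\Bcal_{\delta_0}, \Bcal_{\delta_1}$ can be chosen uniformly so that the hypotheses of Theorem~\ref{A-2.2} hold in the new coordinates. There is also a minor subtlety in that $\Phi_i$ was given as a function of $(u_0,u_1)$ in \eqref{2.13} rather than $(u_0,\Lam)$; one uses the (generalized) Hugoniot parametrization, along which $\Lam = \Lamb(u_0,u_1)$ is a well-defined Lipschitz function by the argument in the proof of Theorem~\ref{A-2.2}, to pass freely between the two representations. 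Since the hard analytic work (the implicit-function-theorem construction of the Hugoniot curve with only Lipschitz regularity, and the verification of tangency to $r_i$) has already been carried out in Theorem~\ref{A-2.2}, the proof of the corollary is essentially a translation argument, and I would present it as such.
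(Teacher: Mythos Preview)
Your proposal is correct and follows exactly the approach the paper intends: the paper gives no explicit proof of Corollary~\ref{A-2.4}, presenting it instead as a direct ``reformulation of the main result'' (Theorem~\ref{A-2.2}) via the entropy change of variables $(v,w)\mapsto(v,U(v,w))$ whose legitimacy is precisely the non-degeneracy condition~\eqref{2.12}. Your write-up simply fills in the bookkeeping that the paper omits, and your observation that the quadratic bound in~\eqref{2.13} forces the constant $c_1$ in~\eqref{2.6} to be small on a small enough neighborhood is the right way to close the loop.
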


The theory of the present section applies to the examples listed in Section~2, at least when
resonance effect is avoided. It is straighforward to include linearly degenerate characteristic fields provided
the kinetic function is chosen to vanish identically for those fields. 


\section{Multi-pressure Navier-Stokes system}
\label{MP-NS} 

In this section, we establish the existence and uniqueness of the traveling wave
solutions of the multi-pressure Navier-Stokes equations introduced in
Section~2, under fairly general assumptions on the
pressure and viscosity closure laws. The equations under consideration
were stated in \eqref{s1_bis}.  
Each smooth pressure law $p_i(\tau,e_i)$, $1\le i\le N$, is assumed
to obey the second principle of the thermodynamics, namely
\be
\label{thermo}
T_i(\tau,e_i) \, ds_i = de_i + p_i(\tau,e_i) \, d\tau,
\ee
where $T_i(\tau,e_i)>0$ is the corresponding temperature variable
and $s_i>0$ denotes the specific entropy. 

The map $(\tau,s_i)\mapsto e_i(\tau,s_i)$ is thus well-defined and is assumed to be strictly convex. 
In addition, the following asymptotic conditions are assumed 
\be
\label{thermo1}
\aligned
\lim_{\tau\to 0^+} e_i(\tau,s_i) = +\infty,
\qquad
\lim_{s_i\to +\infty} e_i(\tau,s_i) = +\infty,
\qquad
\lim_{\tau\to +\infty} e_i(\tau,s_i) = 0.
\endaligned
\ee
It follows that 
\be
\label{Tpositif}
p_i(\tau,s_i)=-\frac{\del e_i}{\del \tau}(\tau,s_i) >0, 
\quad
\qquad 
T_i(\tau,s_i)=\frac{\del e_i}{\del s_i}(\tau,s_i) >0.
\ee

Furthermore, the following assumptions are introduced for any given $\tau>0$:
\begin{eqnarray}
&&\label{hypH2}
  \frac{\del p_i}{\del s_i}(\tau,s_i)>0,
\\
&&\label{hypH1}
  \sum_{i=1}^{N} \frac{\del^2 p_i}{\del \tau^2}(\tau,\vs)>0,\quad
\\
&&\label{hypH3}
  \lim_{\tau\to0^+}  \sum_{i=1}^{N}p_i(\tau,\vs)=+\infty,\quad
  \lim_{\tau\to+\infty}  \sum_{i=1}^{N}p_i(\tau,\vs)=0,
\\
&&\label{hypH4}
  \lim_{\tau\to0^+} \sum_{i=1}^{N} \frac{\del  p_i}{\del
     \tau}(\tau,\vs)=-\infty,\quad
  \lim_{\tau\to+\infty} \sum_{i=1}^{N} \frac{\del  p_i}{\del
  \tau}(\tau,\vs)=0,
\\
&&\label{hypH5}
  \lim_{s_i\to+\infty} \frac{\del p_i}{\del \tau}(\tau,s_i)=-\infty,
\end{eqnarray}
where $\vs=(s_1,...,s_N)$. We refer to Menikoff and Plohr \cite{MP} for general properties of the fluid equations
and the equation of state. 
Next, the viscosity laws are given smooth functions with 
\be
\label{condvisco}
\mu_i(\tau,s_i)\ge0, \quad 1\le i\le N,
\qquad\mbox{and}\qquad
\mu(\tau,s):=\sum_{i=1}^{N}
\mu_i(\tau,s_i)>0.
\ee
To shorten the notation, the PDE's system
(\ref{s1_bis}) is given in the condensed form
$$
\del_t \vv^\eps + \Acal(\vv^\eps)\del_x\vv^\eps = \eps\Bcal(\vv^\eps,\del_x\vv^\eps,\del_{xx}\vv^\eps),
$$
with $\vv$ in the phase space:
$$
\Omega_{\vv} = \{
\vv=(\rho,\rho u,(\rho e_i)_{1\le i\le N})\in\RR^{N+2}; \rho>0,\rho
u\in\RR,\rho e_i>0, 1\le i\le N
\}.
$$
The basic properties of (\ref{s1_bis}) are summarized in the following statement.

\begin{lemma}
The underlying first-order part from (\ref{s1_bis}) is hyperbolic in
$\Omega_{\vv}$ and admits three distinct eigenvalues
\begin{eqnarray} \quad \qquad 
\lambda_1(\vv) = u-c(\vv),~
\lambda_2(\vv) = ... = \lambda_{N+1}(\vv) = u,~
\lambda_{N+2}(\vv) = u-c(\vv),\\
\end{eqnarray}
where we set
\begin{eqnarray}
c^2(\vv)=\sum_{i=1}^N -\tau^2\frac{\del p_i}{\del \tau}(\tau,s_i).
\end{eqnarray}
The extreme fields are genuinely nonlinear while the intermediate ones
are linearly degenerate. Then, smooth solutions of (\ref{s1_bis}) satisfy
the additional conservation law 
\be
\del_t (\rho E)^{\eps} + 
  \del_x \left( \{\rho E\}(\vv^\eps)+\sum_{i=1}^N p_i(\tau^\eps,s_i^\eps)
    u^\eps\right) = \nonumber\\ 
  \eps\del_x \left( \sum_{i=1}^N \mu_i(\tau^\eps,s_i^\eps)
    u^\eps\del_x u^\eps \right),
\ee
where the total energy reads
\be\label{totalE}
(\rho E) = \frac{(\rho u)^2}{2\rho} + \sum_{i=1}^N \rho e_i.
\ee
At last, the smooth solutions of (\ref{s1_bis}) obey the $N$ balance
equations 
\be
\del_t (\rho s_i)^\eps + \del_x \left( (\rho s_i)^\eps u^\eps \right)= 
\eps \frac{\mu_i(\tau^\eps,s_i^\eps)}{T_i(\tau^\eps,s_i^\eps)} (\del_x u^\eps)^2. 
\ee
\end{lemma}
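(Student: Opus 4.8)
The plan is to establish the three assertions in turn, all by direct computation on smooth solutions, since each is an algebraic or differential-algebraic identity that holds pointwise.

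\emph{Eigenvalues and nature of the fields.} First I would write the principal part $\del_t\vv + \Acal(\vv)\del_x\vv = 0$ in the non-conservative variable $\vv=(\rho,\rho u,(\rho e_i)_i)$, or equivalently pass to the more convenient primitive variables $(\tau,u,(s_i)_i)$ using \eqref{thermo} and \eqref{Tpositif}. In these variables the system decouples nicely: the momentum equation produces, after using $\del_t(\rho e_i)+\del_x(\rho u e_i) + p_i\del_x u = 0$ and the mass equation, an acoustic block coupling $\tau$ and $u$, while each $s_i$ is simply advected, $\del_t s_i + u\,\del_x s_i = 0$ along smooth solutions (this is the first computation and uses $T_i\,ds_i = de_i + p_i\,d\tau$). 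The eigenvalue problem then reduces to computing the sound speed of the acoustic block, which yields $\lambda = u\pm c(\vv)$ with $c^2(\vv) = \sum_i -\tau^2\,\del_\tau p_i(\tau,s_i)$; note $c^2>0$ is exactly guaranteed by \eqref{hypH4} (or pointwise by $\sum_i\del_\tau p_i<0$, a consequence of strict convexity of each $e_i$ in $\tau$). The $(N)$-fold eigenvalue $u$ has the full complement of eigenvectors because it comes from the $N$ decoupled transport equations for the $s_i$ together with one more linear combination. Genuine nonlinearity of the extreme fields and linear degeneracy of the intermediate field is then the routine check $\nabla\lambda_1\cdot r_1\neq 0$, $\nabla\lambda_{N+2}\cdot r_{N+2}\neq 0$, and $\nabla\lambda_2\cdot r_2 = 0$; the nondegeneracy of the extreme fields is where \eqref{hypH1} (convexity of $\sum_i p_i$ in $\tau$) enters, exactly as in the classical single-pressure Navier--Stokes/Euler case.

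\emph{Total energy and entropy balance laws.} For the energy law I would differentiate $(\rho E)$ with $\rho E = (\rho u)^2/(2\rho) + \sum_i \rho e_i$, combine the mass, momentum, and internal-energy equations of \eqref{s1_bis}, and track the viscous right-hand sides. The inviscid terms telescope into $\del_x\big(\{\rho E\}(\vv) + \sum_i p_i u\big)$ in the usual way (the $p_i\del_x u$ source in the $\rho e_i$ equation cancels against the $\del_x(\sum_i p_i)$ contribution coming from $u$ times the momentum equation). On the viscous side, $u$ times $\eps\,\del_x(\sum_i\mu_i\del_x u)$ gives $\eps\,\del_x(\sum_i\mu_i u\,\del_x u) - \eps(\sum_i\mu_i)(\del_x u)^2$, and the $-\eps(\sum_i\mu_i)(\del_x u)^2$ is cancelled exactly by $\sum_i$ of the internal-energy viscous sources $\eps\,\mu_i(\del_x u)^2$; this leaves the stated conservative viscous flux $\eps\,\del_x(\sum_i\mu_i u\,\del_x u)$. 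For the $N$ entropy balance laws I would use the thermodynamic identity \eqref{thermo} once more: from the $\rho e_i$ equation and the mass equation one gets $\rho(\del_t e_i + u\,\del_x e_i) = -p_i\del_x u + \eps\mu_i(\del_x u)^2$, and since $T_i\,ds_i = de_i + p_i\,d\tau$ with $\del_t\tau + u\,\del_x\tau = \tau\,\del_x u$ (from mass conservation), the $-p_i\del_x u$ terms cancel, giving $\rho T_i(\del_t s_i + u\,\del_x s_i) = \eps\mu_i(\del_x u)^2$, i.e. $\del_t(\rho s_i) + \del_x(\rho s_i u) = \eps\,(\mu_i/T_i)(\del_x u)^2$ after using mass conservation and $T_i>0$ from \eqref{Tpositif}.

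\emph{Main obstacle.} The calculations are all elementary, so the only real care is bookkeeping: one must be consistent about which combination of the equations in \eqref{s1_bis} is being used and must keep the viscous terms exact (not just to leading order), since the whole point of the lemma is to record the \emph{precise} dissipation measures $\eps(\mu_i/T_i)(\del_x u)^2$ and $\eps\,\mu_i(\del_x u)^2$ that will later drive the traveling-wave analysis of Section~\ref{MP-NS}. The mildly delicate structural point is verifying that the $N$-fold eigenvalue $u$ really carries $N$ independent eigenvectors (weak vs.\ strict hyperbolicity of that block); this follows because the $s_i$-equations are genuinely decoupled transport equations, so no Jordan block can form there. I would organize the write-up so that the change to $(\tau,u,\vs)$ variables is done once at the start and then reused for all three parts.
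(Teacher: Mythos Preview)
The paper states this lemma without proof; it is recorded as a collection of standard facts about the multi-pressure system and the narrative resumes immediately after the statement. Your proposal supplies exactly the routine verification one would expect: pass to the primitive variables $(\tau,u,\vs)$, read off the acoustic block and the $N$ decoupled entropy transport equations, and check genuine nonlinearity via the fundamental-derivative hypothesis \eqref{hypH1}. The derivations of the total-energy conservation law and the $N$ entropy balance laws are likewise the standard manipulations, and your tracking of the viscous cancellations is correct.

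Two very minor remarks. First, the positivity $c^2>0$ is not what \eqref{hypH4} says (that hypothesis concerns asymptotics in $\tau$); your parenthetical explanation via strict convexity of each $e_i(\cdot,s_i)$ is the right one, so just drop the reference to \eqref{hypH4}. Second, the eigenvalue $u$ has multiplicity exactly $N$ in the $(N+2)$-dimensional system, and the $N$ independent eigenvectors are simply the $\partial_{s_i}$ directions; there is no ``one more linear combination'' to find, so that phrase should be deleted to avoid confusion.
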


As already claimed, changes of variables with distinctive features
allow to recast (\ref{s1_bis}) either in the equivalent form:
\be\label{eq1}
\Acal_0(\vw^\eps)\del_t \vw^\eps +\Acal_1(\vw^\eps)\del_x\vw^\eps = 
   \eps\del_x(\Dcal(\vw^\eps)\del_x \vw^\eps),
\ee
with $\Acal_0$ regular, or in the form 
\be\label{eq2}
\del_t \ubf^\eps + \del_x \Fcal(\ubf^\eps) = 
   \eps \, \Rcal(\ubf^\eps,\del_x\ubf^\eps,\del_{xx}\ubf^\eps).
\ee
We briefly discuss the changes of variables involved in
(\ref{eq1}) and (\ref{eq2}). Concerning (\ref{eq1}), we first observe
that summing the $N$ governing equations for the internal energies
yields 
$$
\del_t\rho e+\del_x\rho e u+\sum_{i=1}^N p_i\del_x u =
  \eps \mu(\tau,\vs) (\del_x u)^2,
$$
so that the following identities are easily checked:
\begin{eqnarray}
&&
\mu(\tau,\vs) \Big( 
    \del_t \rho e_i + \del_x(\rho e_i u) +p_i(\tau,s_i)\del_x u
\Big) - \nonumber\\
&& \hspace*{0.5cm}
\mu_i(\tau,s_i) \Big(
    \del_t\rho e + \del_x(\rho e u) +\sum_{i=1}^N p_i\del_x u
\Big) =0,
\quad 1\le i\le N-1.
\label{eq3}
\end{eqnarray}
Since $\rho e =\rho E-(\rho u)^2/(2\rho)$, the conservation laws for
$\rho$, $\rho u$ and $\rho E$ supplemented by the $(N-1)$ balance
equations in (\ref{eq3}) can be seen to give the equivalent form
stated in (\ref{eq1}) when defining $\vw=(\rho,\rho,\rho E,
(\rho e_i)_{1\le i\le N-1})$. A direct calculation shows that 
${\rm det}\Acal_0(\vw)=\mu(\tau,\vs)^{n-1}>0$.

Concerning system (\ref{eq2}), several changes of variables can be used and we
advocate in the sequel the change of variables $\vv\in\Omega_{\vv} \mapsto
\ubf(\vv)\in\Omega_{\ubf}$, with $\ubf(\vv)=(\rho,\rho u,(\rho s_i)_{1\le
  i\le N})$.

As underlined in Section~\ref{RI-0}, both approaches rely on the
study of the traveling wave solutions of (\ref{s1_bis}). Due to the frame
invariance properties satisfied by the PDE model (\ref{s1_bis}), it
suffices to analyze traveling waves solutions associated with the
first extreme field. With this respect, the main result of this section is
as follows.

\begin{theorem}[Traveling wave solutions to the multi-pressure
    Navier-Stokes system]
\label{theo_main}
Consider the multi-pressure Navier-Stokes system \eqref{s1_bis} when the pressure satisfies the 
positivity, convexity, and asymptotic conditions
 \eqref{thermo1}--\eqref{condvisco}.
Let $\ubf_L \in \Omega_{\ubf}$ and $\sigma\in\RR$ be given such that
\be\label{lax}
\frac{u_L-\sigma}{c(\ubf_L)}>1,\quad
c^2(\ubf_L) = \sum_{i=1}^N -\tau^2_L 
    \frac{\del p}{\del \tau}(\tau_L,(s_i)_L).
\ee
Then, there exists a unique traveling wave solution to \eqref{s1_bis}
issuing from the left-hand state $\ubf_L$ and reaching some right-hand
state $\ubf_R\in\Omega_{\ubf}$ with 
\be\label{lax_bis}
0<\frac{u_R-\sigma}{c(\ubf_R)}<1
\ee
\end{theorem}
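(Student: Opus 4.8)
The plan is to reduce the PDE problem to an autonomous ODE system by the traveling-wave ansatz $\vv(x,t)=\vv(\xi)$ with $\xi=(x-\sigma t)/\eps$, substitute into \eqref{s1_bis}, integrate the conservation laws for mass and momentum once, and thereby obtain a first integral that pins down the total mass flux and reduces the unknowns to $\tau$ and the internal energies $e_i$ (equivalently the entropies $s_i$). Writing $m:=\rho(u-\sigma)$, the mass equation gives $m=\text{const}$ along the orbit, and the momentum equation integrates to a relation expressing $m u + \sum_i p_i(\tau,s_i)$ plus the viscous term as a constant; solving for $\eps\mu\,\del_x u$ yields $\dot u$ (and hence $\dot\tau$) as an algebraic function of the state, provided $\mu>0$, which is guaranteed by \eqref{condvisco}. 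The remaining equations become the $N$ entropy balance laws, which along a traveling wave read $m\,\dot s_i=\big(\mu_i/T_i\big)(\dot u)^2$; since $T_i>0$ and $\mu_i\ge 0$, each $s_i$ is monotone in $\xi$, with sign governed by that of $m$. This monotonicity of the entropies is the structural backbone: it forces the orbit to be a graph over a single scalar parameter and rules out periodic or recurrent behavior.

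Next I would set up the phase plane carefully. The left-hand state $\ubf_L$ is an equilibrium of the reduced ODE: at $\ubf_L$ the viscous term vanishes and the Rankine–Hugoniot-type algebraic relations are satisfied by construction, so $\dot u=\dot\tau=\dot s_i=0$ there. I would linearize the reduced system at $\ubf_L$ and use the Lax condition \eqref{lax}, namely $(u_L-\sigma)/c(\ubf_L)>1$, to show that $\ubf_L$ is a hyperbolic rest point whose unstable manifold is one-dimensional; the relevant eigenvalue calculation is exactly the one underlying genuine nonlinearity of the first extreme field (Lemma in Section~\ref{MP-NS}), with $c^2(\vv)=\sum_i(-\tau^2\,\del_\tau p_i)$ entering as the characteristic speed. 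The inequality $(u_L-\sigma)/c(\ubf_L)>1$ translates into the linearized flow having a genuine one-dimensional outgoing direction. I would then follow this unstable trajectory forward and show it cannot escape $\Omega_{\vv}$: the convexity and monotonicity hypotheses \eqref{hypH1}–\eqref{hypH5} give a priori bounds keeping $\tau$ bounded away from $0$ and $+\infty$ and keeping $\rho e_i>0$, so the orbit stays in a compact subset of the phase space and must converge to another equilibrium $\ubf_R$.

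To identify $\ubf_R$ and establish \eqref{lax_bis}, I would argue that any rest point reached must again satisfy the algebraic jump relations with flux $m$ and speed $\sigma$, and then use the asymptotic/monotonicity assumptions to show there is exactly one such point on the reachable side, with $(u_R-\sigma)/c(\ubf_R)$ strictly between $0$ and $1$; the strict inequalities come from $\ubf_R\neq\ubf_L$ (the orbit is nonconstant) together with the sign of entropy production. Here a Lyapunov-type functional built from the entropies, $\sum_i \rho s_i$ or a weighted combination dictated by \eqref{hypH2}, should be monotone along the orbit by the entropy balance laws above, which simultaneously gives convergence (LaSalle) and the correct ordering of characteristic speeds at the endpoints (so that $\ubf_R$ is a classical Lax shock on the subsonic side). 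Uniqueness of the traveling wave then follows from uniqueness of the one-dimensional unstable manifold at $\ubf_L$.

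The main obstacle I anticipate is the global phase-plane analysis: the reduced system has $N{+}1$ equations (for $\tau$ and the $s_i$), and although each $s_i$ is individually monotone, controlling their joint evolution — in particular showing the viscous term $\eps\mu\,\del_x u$ stays well-defined (i.e.\ the algebraic relation for $\dot u$ remains invertible, which can fail at sonic points where $u-\sigma=\pm c$) and that the orbit does not run into the sonic set before reaching $\ubf_R$ — is delicate. The Lax condition \eqref{lax} gives supersonic data, and one must propagate this to show the orbit decreases through the characteristic speed exactly once; this is where assumptions \eqref{hypH1}–\eqref{hypH5} on the summed pressure do the essential work, guaranteeing the relevant monotonicity of $c$ along the orbit and hence a single sonic crossing landing in the regime \eqref{lax_bis}. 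I expect the bulk of the technical effort to be in making this crossing argument rigorous and in the compactness/confinement estimates keeping the trajectory inside $\Omega_{\vv}$.
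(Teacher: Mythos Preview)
Your overall strategy---reduce to an autonomous ODE in $(\tau,\vs)$, locate a one-dimensional unstable manifold at $\omega_L$, follow it forward, establish confinement, and conclude via LaSalle---matches the paper's. But the confinement step has a genuine gap: you never identify the first integral that does the work. The paper observes (Proposition~\ref{propHconstant}) that the total energy conservation law yields a conserved quantity $\Hcal(\omega)=e(\omega)-e(\omega_L)-\tfrac{m^2}{2}(\tau^2-\tau_L^2)+(m^2\tau_L+p(\omega_L))(\tau-\tau_L)$ along orbits. Combined with $\vs\ge\vs_L$ and the asymptotic hypotheses \eqref{thermo1}, this integral is precisely what rules out $\tau\to0^+$ (Lemma~\ref{lemvide}) and bounds $\vs$ from above (Proposition~\ref{propfinex}). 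Your appeal to ``convexity and monotonicity hypotheses'' for a priori bounds is too vague; entropy monotonicity alone gives no barrier against $\tau\to0^+$ or $s_i\to+\infty$, and without $\Hcal$ the orbit need not stay in a compact set.

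Two smaller corrections. The rest point $\omega_L$ is \emph{not} hyperbolic: the linearization has one nonzero eigenvalue $\partial_\tau\Fcal(\omega_L)/\mu(\omega_L)$ and a semisimple zero eigenvalue of multiplicity $N$, so the one-dimensional unstable manifold comes from the center manifold theorem; uniqueness then requires separately discarding the expansive branch $\tau>\tau_L$ (Proposition~\ref{nonhetero}) before invoking Picard--Lindel\"of. And your anticipated obstacle is misidentified: after integration the system reads $\dot\tau=\Fcal(\tau,\vs)/\mu$, $\dot s_i=\mu_i\Fcal^2/(\mu^2 T_i)$, which is globally well-defined on $\Omega$ since $\mu>0$---there is no invertibility failure at sonic points. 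The paper's Lyapunov function is simply $\tau$, monotone on the positively invariant region $\Ical=\{\tau<\tau_L,\ \vs\ge\vs_L,\ \Fcal\le0\}$ (Lemma~\ref{lemI}); your entropy-based alternative would also be monotone, but does not by itself deliver the invariant compact set that LaSalle needs.
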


The proof of this result will follow from the characterization of a
positively invariant compact set of $\Omega_{\ubf}$. Then the Lasalle
invariance principle applied in connection with a suitable Lyapunov
function ensures the existence of a traveling wave. Uniqueness is
obtained as a simple consequence of the center manifold theorem.

We gather here some of the notation used repeatedly hereafter and
give the precise form of the autonomous system which governs the
viscous profiles we study for existence. Simple but useful
geometrical properties induced by the corresponding vector field will be
then put forward.

Due to Galilean invariance, it suffices to consider the case of a null
velocity $\sigma$. The precise form of the PDE system governing the
traveling wave solutions then follows when restricting attention to
solutions which depend solely on $x$:
\be\label{sysG1}
\aligned
(\rho u)_x&=0,\\
(\rho u^2+p(\tau,\vs))_x&=(\mu(\tau,\vs)u_x)_x,\\
T_i(\tau,s_i)(\rho s_i u)_x&=\mu_i(\tau,\vs)(u_x)^2,\quad 1\le i\le N.
\endaligned
\ee
The first equation in (\ref{sysG1}) implies that the relative mass
flux $\rho u$ has a constant value denoted by $m=\rho_L u_L$. As
already underlined, we focus ourselves on traveling wave solutions
associated with the first GNL field; namely we consider $m>0$. Observe
that the Lax condition (\ref{lax}) expressed expressed for a null
velocity $\sigma$ reads 
$$
m>\rho_L c_L.
$$

Next by integrating once the second equation in (\ref{sysG1}), the
identity $u=m \tau$ allows one to derive the following
$(N+1)$-dimensional autonomous system:
\be\label{syssigma}
\aligned
\displaystyle
\dot{\tau}&=\frac{1}{\mu(\tau,\vs)} \Big(
   p(\tau,\vs)-p(\tau_L,\vs_L)+m^2(\tau-\tau_L)\Big)
   := \frac{1}{\mu(\tau,\vs)} \Fcal(\tau,\vs),\\
\displaystyle
\dot{s}_i&=\frac{\mu_i(\tau,s_i)}{\mu^2(\tau,\vs)T_i(\tau,s_i)}\Fcal^2(\tau,\vs),
  \qquad 1\le i \le N,
\endaligned
\ee
where dots denote differentiation with respect to the rescaled
variable $x/m$ that we shall refer with little abuse as a time in the
sequel.

This dynamical system is endowed with the following open subset of
$\RR^{N+1}$ which will serve as a natural phase space:
\be\label{phasespace}
\Omega=\left\{\omega:=(\tau,\vs)\in\RR^{N+1};\tau>0\right\}.
\ee
To shorten the notation, a given function $\Psi$ of $\tau$ and $\vs$
is simply denoted hereafter by $\Psi(\omega)$.

Recall that the total viscosity $\mu(\omega)$ is assumed to
stay strictly positive over $\Omega$. Then, the regularity assumptions
made on all the thermodynamic and viscosity mappings ensure that the
vector field in (\ref{syssigma}) is continuously differentiable

The unique nonextensible solution of
(\ref{syssigma}) with initial data $\omega_0$ in $\Omega$ is referred
as to the flow $\omega_0\cdot t$ for the times $t$ in the maximal
interval of existence $(t^-(\omega_0),t^+(\omega_0))$. The positive
(respectively negative) semi-orbit $\gamma^+(\omega_0)$
(resp. $\gamma^-(\omega_0)$) classically denotes the set of states
$\omega_0\cdot[0,t^+(\omega_0))=\{\omega_0\cdot t : 0\le
t<t^+(\omega_0)\}$
(resp. $\omega_0\cdot(t^-(\omega_0),0]=\{\omega_0\cdot t :
t^-(\omega_0)<t\le 0\}$), the orbit being then defined as
$\gamma(\omega_0)=\gamma^-(\omega_0)\cup\gamma^+(\omega_0)$. At last,
for each $\omega_0$ in $\Omega$, the positive limit set (the
so-called $\varpi$-limit set in what follows) of $\omega_0$ finds the
definition
$\varpi(\omega_0):=\cap_{t>0}\overline{\gamma^+(\omega_0\cdot t)}$,
such a set is thus empty as soon as $t^+(\omega_0)$ is finite.

Before we enter the central part of the analysis, let us underline
that the $(N+1)$ constitutive variables of (\ref{syssigma}) are
necessarily kept in their evolution in time in a $N$-dimensional
sub-manifold of $\Omega$, the latter being entirely prescribed by the
choice of the initial data $\omega_0\in\Omega$. This is the matter of the following
statement which essentially reflects the conservation property met by
the total energy (\ref{totalE}).

\begin{proposition}\label{propHconstant}
Let $\omega_0$ be a given state in $\Omega$. Then the flow
$\omega_0\cdot t$ satisfies for all time in its maximal interval of
existence:
\be\label{energyconservation}
\Hcal(\omega_0\cdot t)=\Hcal(\omega_0),
\ee
where the regular mapping $\Hcal:\Omega\to \RR$ is defined by 
$$
\Hcal(\omega)=e(\omega)-e(\omega_L)-\frac{m^2}{2}(\tau^2-\tau_L^2)+(m^2\tau_L+p(\omega_L))(\tau-\tau_L).
$$
\end{proposition}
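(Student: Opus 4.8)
The plan is to show that $\Hcal$ is a conserved quantity (first integral) for the autonomous system \eqref{syssigma} by verifying that its time-derivative along trajectories vanishes identically. This is a direct computation: differentiating $\Hcal(\omega_0\cdot t)$ with respect to the rescaled time variable gives
$$
\frac{d}{dt}\Hcal(\omega_0\cdot t) = \nabla_\tau e \cdot \dot\tau + \sum_{i=1}^N \frac{\del e}{\del s_i}\,\dot s_i - m^2\tau\,\dot\tau + (m^2\tau_L + p(\omega_L))\,\dot\tau.
$$
Here $e(\omega)=\sum_i e_i(\tau,s_i)$ is the total internal energy, so by \eqref{Tpositif} we have $\del e/\del\tau = -p(\tau,\vs)$ and $\del e/\del s_i = T_i(\tau,s_i)$. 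Substituting these together with the expressions for $\dot\tau$ and $\dot s_i$ from \eqref{syssigma}, the terms $T_i\,\dot s_i$ combine, via the second equation of \eqref{syssigma}, into $\frac{1}{\mu^2}\big(\sum_i \mu_i\big)\Fcal^2 = \frac{1}{\mu}\Fcal^2$ (using $\mu=\sum_i\mu_i$ from \eqref{condvisco}), while the $\dot\tau$ prefactor collects to $-p(\tau,\vs) - m^2\tau + m^2\tau_L + p(\omega_L)$, which is precisely $-\Fcal(\tau,\vs)/1$ rewritten: indeed $\Fcal(\tau,\vs) = p(\tau,\vs) - p(\omega_L) + m^2(\tau-\tau_L)$, so the prefactor equals $-\Fcal(\tau,\vs)$. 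Multiplying by $\dot\tau = \Fcal/\mu$ gives $-\Fcal^2/\mu$, which exactly cancels the contribution from the $s_i$-equations. Hence $\frac{d}{dt}\Hcal(\omega_0\cdot t)=0$, and integrating yields \eqref{energyconservation}.

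Concretely, I would organize the proof in three short steps. First, record that $e(\omega):=\sum_{i=1}^N e_i(\tau,s_i)$ and invoke \eqref{Tpositif} to identify $\del_\tau e = -p$ and $\del_{s_i} e = T_i$, so that $\Hcal$ is $C^1$ on $\Omega$ (the regularity of the thermodynamic laws is already assumed). Second, compute $\frac{d}{dt}\Hcal$ along the flow using the chain rule and the system \eqref{syssigma}, keeping the two groups of terms — those proportional to $\dot\tau$ and those proportional to $\dot s_i$ — separate. Third, observe the algebraic cancellation: the $\dot\tau$-terms assemble into $-\Fcal(\omega)\cdot\dot\tau = -\Fcal^2(\omega)/\mu(\omega)$, while the $\dot s_i$-terms assemble into $+\Fcal^2(\omega)/\mu(\omega)$, so the sum is zero on all of the maximal interval of existence. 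Since $\Hcal$ is constant along the flow and $\Hcal(\omega_L)=0$ is consistent with the base point (though this normalization is not needed for the statement), \eqref{energyconservation} follows.

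There is essentially no serious obstacle here; the result is a bookkeeping identity encoding the conservation of total energy \eqref{totalE} at the level of the traveling-wave ODE, as the statement itself already indicates. The only point requiring a modicum of care is making sure the identity $\del e/\del\tau = -p(\tau,\vs)$ with $p = \sum_i p_i$ is applied correctly — i.e.\ that summing the individual relations \eqref{Tpositif} over $i$ is legitimate, which it is because $e$ and $p$ are by definition the sums of the $e_i$ and $p_i$. One should also note that the identity holds on the full maximal interval $(t^-(\omega_0),t^+(\omega_0))$ simply because the computation is pointwise and the vector field is $C^1$ there. I would therefore present the computation compactly, emphasizing the cancellation structure rather than grinding through each term, and close by remarking that this proposition is what confines each trajectory to an $N$-dimensional level set $\{\Hcal = \Hcal(\omega_0)\}$, a fact exploited in the subsequent construction of the positively invariant compact set.
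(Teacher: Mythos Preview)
Your proof is correct. Your route differs slightly from the paper's: you differentiate $\Hcal$ directly along the ODE flow \eqref{syssigma} and exhibit the pointwise cancellation $-\Fcal^2/\mu + \Fcal^2/\mu = 0$, whereas the paper starts from the PDE-level total energy conservation law \eqref{dfte}, specializes it to the traveling-wave setting ($\rho u = m$, $u = m\tau$), integrates once between two times, and then rearranges terms using the definition of $\Fcal$ to recover the constancy of $\Hcal$. The two arguments are of course equivalent in content --- both encode conservation of total energy --- but yours is the more self-contained ODE computation, while the paper's makes the physical provenance of the conserved quantity (the total energy balance of the full Navier--Stokes system) more transparent. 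Either presentation is perfectly acceptable; yours has the minor advantage of not requiring the reader to revisit the PDE form.
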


\begin{proof}
All the flows under consideration are at least continuously
differentiable in their maximal interval of existence. The additional
conservation law (\ref{totalE}) for the total energy therefore applies
and its differential form reads 
\be\label{dfte}
\left\{ (E +\tau p(\omega))\rho u\right\}_x=\left\{\frac{\mu}{2}(u^2)_x\right\}_x.
\ee
In view of the algebraic invariant $\rho u=m$, (\ref{dfte}) once
integrated for a prescribed $\omega_0$ in $\Omega$ between time zero
and a given time $t$ in $(t^-(\omega_0),t^+(\omega_0))$ can be seen to
read 
$$
\aligned
\left\{ E +\tau p(\omega))\rho u\right\}(t)-\left\{ E +\tau p(\omega))\rho u\right\}(0)
& = 
	\left\{\tau(\mu \dot{\tau})\right\}(t)-\left\{\tau(\mu \dot{\tau})\right\}(0),
	\\
& =  \left\{\tau\Fcal(\omega)\right\}(t)-\left\{\tau\Fcal(\omega)\right\}(0).
\endaligned
$$ 
Since $E$ writes $m^2\tau^2/2+\eps(\omega)$, the definition of
$\Fcal$ given in (\ref{syssigma}) easily yields the required identity
(\ref{energyconservation}) after some rearrangements in the terms
while subtracting for convenience to both sides the constant
$\eps_L+m^2\tau_L^2/2+\tau_Lp_L$.
\end{proof}

The above statement clearly implies that all the possible heteroclinic
orbits of (\ref{syssigma}) which connect the critical point $\omega_L$
in the past are only made of states $\omega$ such that 
\be\label{H0}
\Hcal(\omega)=\Hcal(\omega_L)=0.
\ee

To end up with these preliminary remarks, we point out an elementary
but useful geometrical property of the flows associated with
(\ref{syssigma}) which will put restriction on possible right
connecting states.
\begin{lemma}\label{leminvariant}
Let $\omega_0$ be given in $\Omega$, then the subset of $\Omega$ defined by
\be\label{omes0}
\Omega(\omega_0)=\left\{\omega\in\Omega; \vs\ge\vs_0,\Hcal(\omega)=\Hcal(\omega_0) \right\}
\ee
is positively invariant.
\end{lemma}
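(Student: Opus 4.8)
The plan is to show that the vector field of \eqref{syssigma} points \emph{into} the set $\Omega(\omega_0)$ along every part of its boundary, so that no orbit starting inside can leave. Observe that $\Omega(\omega_0)$ is the intersection of the level set $\{\Hcal = \Hcal(\omega_0)\}$ with the half-spaces $\{s_i \geq (s_0)_i\}$, $1 \le i \le N$. By Proposition~\ref{propHconstant}, the constraint $\Hcal(\omega) = \Hcal(\omega_0)$ is automatically preserved by the flow, so it suffices to check that each individual constraint $s_i \geq (s_0)_i$ is preserved.

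First I would examine the sign of $\dot s_i$ along the relevant portion of the boundary, namely where $s_i = (s_0)_i$ (and $s_j \ge (s_0)_j$ for $j \neq i$). From the second line of \eqref{syssigma},
$$
\dot s_i = \frac{\mu_i(\tau,s_i)}{\mu^2(\tau,\vs)\,T_i(\tau,s_i)}\,\Fcal^2(\tau,\vs).
$$
By hypothesis \eqref{condvisco} we have $\mu_i \geq 0$ and $\mu > 0$, and by \eqref{Tpositif} we have $T_i > 0$; moreover $\Fcal^2 \geq 0$ trivially. Hence $\dot s_i \geq 0$ \emph{everywhere} in $\Omega$, in particular on the face $\{s_i = (s_0)_i\}$. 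This means the flow can only increase $s_i$, so it cannot cross this face to the region $s_i < (s_0)_i$. Combining this over all $i$ and using the invariance of $\Hcal$, any orbit starting in $\Omega(\omega_0)$ remains in $\Omega(\omega_0)$ for all positive times in its interval of existence. One should also note that $\Omega(\omega_0)$ is a subset of $\Omega$ itself (i.e. $\tau > 0$ is maintained), which is part of the definition of the phase space; along boundary pieces where $\tau$ might threaten to reach $0$, the constraint $\Hcal = \Hcal(\omega_0)$ together with the asymptotic conditions \eqref{thermo1}, \eqref{hypH3}--\eqref{hypH4} on $e$ and $p$ forces $\tau$ to stay bounded away from $0$, but for the purely \emph{positive}-invariance statement within $\Omega$ the monotonicity of the $s_i$ is the essential point.

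The argument is essentially a soft one: it is really just the observation that $s_i$ is nondecreasing along trajectories, which is the dynamical-systems shadow of the entropy production identities in the Lemma preceding \eqref{eq1}. The only mild subtlety — and the step I would be most careful about — is making sure the boundary faces $\{s_i = (s_0)_i\}$ are handled correctly when several of them meet (corners of the polyhedral constraint set): there one needs that \emph{each} $\dot s_i \ge 0$ simultaneously, which is exactly what the sign analysis above gives, with no interaction between the different constraints. There is no genuine obstacle here; the content is entirely in already-established facts, namely Proposition~\ref{propHconstant} and the sign conditions \eqref{condvisco}, \eqref{Tpositif}.
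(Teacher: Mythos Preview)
Your argument is correct and is essentially the same as the paper's: the paper notes in one sentence that positive invariance follows immediately from the non-negativeness of the last $N$ components of the vector field in \eqref{syssigma} (i.e., $\dot s_i \ge 0$), together with the conservation of $\Hcal$ from Proposition~\ref{propHconstant}. Your additional remarks about corners and about $\tau$ staying positive are harmless elaborations but not needed for the statement as formulated.
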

The invariance of this region with respect to all positive semi-flows
immediately follows from the non-negativeness of the $N^{\rm th}$-last
components of the vector field entering the definition of
(\ref{syssigma}). As a consequence, possible heteroclinic orbits
connecting $\omega_L$ in the past must entirely lie in 
\be\label{omeL}
\Omega(\omega_L)=\left\{\omega\in\Omega; \vs\ge\vs_L,\Hcal(\omega)=0 \right\}.
\ee
The region (\ref{omeL}) will play a central role in the derivation of
positively invariant compact sets.

Here, we exhibit some important features of the linearization
$LX(\omega_c)$ of the vector field $X$ at equilibrium points
$\omega_c$, i.e. at states satisfying $\Fcal(\omega_c)=0$. We check in particular
that such states are always non-hyperbolic points for which the space
$\RR^{N+1}$ writes as a direct sum of the eigenspaces associated with
$LX(\omega_c)$ under the following non-degeneracy condition:
\be\label{nondegener}
\del_{\tau}\Fcal(\omega_c)=m^2+\del_{\tau}p(\tau_c,\vs_c)\not=0.
\ee
In that aim, let us state some basic facts concerning the
linearization $LX(\omega_c)$. The requirement $\Fcal(\omega_c)=0$ is
easily seen to enforce all the partial derivatives of the $N^{\rm
  th}$-last components of $X$ to be identically zero (since these
components are all proportional to $\Fcal^2$). Under the nondegeneracy
condition (\ref{nondegener}), there consequently exists only one non
trivial eigenvalue namely $\del_{\tau}\Fcal(\omega_c)/\mu(\omega_c)$
while $\lambda=0$ is a semisimple eigenvalue of $LX(\omega_c)$ of
multiplicity $N$. Furthermore, the corresponding eigenspaces
$T(\omega_c)$ and $T^c(\omega_c)$ are respectively the span of $e_1$
and $e_2,...,e_N$ where $\{e_i\}_{1\le i\le N+1}$ stands for the
canonical orthonormal basis of $\RR^{N+1}$.

Equipped with these results, the center manifold theorem
ensures the existence of two locally invariant manifolds
$\Wcal(\omega_c)$ and $\Wcal^c(\omega_c)$ (the so-called center manifold)
of class at least $C^1$ and $C^0$ respectively which go through
$\omega_c$ and are respectively tangent to $T(\omega_c)$ and
$T^c(\omega_c)$ at this point. The regularity properties 
above are indeed inherited from the continuous differentiability
of the vector field, according to this theorem.

Assuming $\del_{\tau}\Fcal(\omega_c)$ to be positive (respectively
negative), {\sl i.e.} assuming the corresponding sign for the unique
nontrivial eigenvalue of $LX(\omega_c)$; $\Wcal(\omega_c)$ is
classically referred as to the unstable (resp. stable) manifold with
superscript $u$ (resp. $s$).  Recall that the unstable
manifold of a point is the manifold composed of the totality of the
orbits which tend exponentially fast to the point in negative time;
the stable manifold being defined conversely.
Then by well-known topological considerations, two rest points namely
$\omega_L$ and $\omega_R$ are connected by a heteroclinic orbit
$\gamma$ precisely if
$\gamma\subset\Wcal^u(\omega_L)\cap\Wcal^s(\omega_R)$.

An obvious requirement for the existence of a heteroclinic orbit
connecting $\omega_L$ in the past is then 
$$
\del_{\tau}\Fcal(\omega_L)=m^2+\del_{\tau}p(\omega_L)<0,
$$
but the validity of such an inequality is precisely the matter of the
Lax condition (\ref{lax}). Conversely, a possible connecting
point $\omega_R$ in the future is necessarily subject to the condition
$\del_{\tau}\Fcal(\omega_R)>0$.

Now and since the unstable manifold $\Wcal^u(\omega_L)$ is one
dimensional, there exists locally exactly two solutions of
(\ref{syssigma}) which approach $\omega_L$ as $t\to-\infty$. Arguing
about the property of $\Wcal^u(\omega_L)$  to be tangent to $e_1$, the
associated almost horizontal orbits approach $\omega_L$ from the two
opposite directions $\tau\ge \tau_L$ and $\tau\le \tau_L$. With clear
notation, $\gamma_>(\omega_L)$ (respectively $\gamma_<(\omega_L)$)
will denote the first (resp. the second) orbit.

The following assertion discards the solution converging to $\omega_L$
for negative times from the region $\tau\ge\tau_L$. Note that such a
result precisely precluded expansion shocks to admit viscous
profiles.
\begin{proposition}\label{nonhetero}
There is no heteroclinic orbit of the dynamical system
(\ref{syssigma}) in the domain $\Ncal:=\{\omega\in\Omega :\tau\ge\tau_L,
\vs\ge\vs_L\}$.
\end{proposition}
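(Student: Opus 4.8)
The plan is to argue by contradiction, supposing that a heteroclinic orbit $\gamma$ exists entirely inside $\Ncal = \{\omega \in \Omega : \tau \ge \tau_L, \vs \ge \vs_L\}$. Such an orbit must connect $\omega_L$ in the past to some equilibrium $\omega_R$ in the future, and by Proposition~\ref{propHconstant} and \eqref{H0} it must lie on the level set $\{\Hcal(\omega) = 0\}$. Since we are in the region $\tau \ge \tau_L$, the relevant past-approaching solution is $\gamma_>(\omega_L)$, which leaves $\omega_L$ tangentially to $e_1$ in the direction of increasing $\tau$. The first step is to examine the sign of $\Fcal(\omega)$ along this orbit just after it leaves $\omega_L$: because $\del_\tau \Fcal(\omega_L) < 0$ by the Lax condition \eqref{lax}, we have $\Fcal(\omega) < 0$ for $\tau$ slightly larger than $\tau_L$ along $\gamma_>(\omega_L)$, hence $\dot\tau = \Fcal/\mu < 0$ there. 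So the orbit, having been pushed to $\tau > \tau_L$ by the unstable direction, is immediately driven back toward decreasing $\tau$.

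The key structural step is to analyze the function $\Hcal$ restricted to the slice $\{\vs \ge \vs_L\}$ and use the energy identity together with monotonicity of $\vs$. Along the flow one computes $\frac{d}{dt}\Hcal(\omega_0 \cdot t) = 0$ (Proposition~\ref{propHconstant}), but one should also track an auxiliary monotone quantity. I would differentiate $\tau$ along the orbit and show that on $\Ncal \cap \{\Hcal = 0\}$ the constraint $\Fcal(\tau,\vs) = p(\tau,\vs) - p(\tau_L,\vs_L) + m^2(\tau - \tau_L)$ cannot vanish for $\tau > \tau_L$ when $\vs \ge \vs_L$: indeed $\del_{s_i} p_i > 0$ by \eqref{hypH2} means increasing entropies only raises the pressure, and the convexity hypothesis \eqref{hypH1} on $\sum \del_\tau^2 p_i$ controls the $\tau$-dependence; combined with $\del_\tau \Fcal(\omega_L) < 0$ one shows $\Fcal < 0$ throughout $\{\tau > \tau_L, \vs \ge \vs_L\}$ on the energy surface, except one must be careful near a possible second equilibrium. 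Then $\dot\tau < 0$ on the whole orbit (after it leaves $\omega_L$), which is incompatible with the orbit staying in $\tau \ge \tau_L$ and reaching an equilibrium $\omega_R$ with $\tau_R > \tau_L$: the orbit would have to cross back through $\tau = \tau_L$, leaving $\Ncal$.

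The remaining case to rule out is that $\omega_R = \omega_L$, i.e.\ a homoclinic loop, or that $\tau_R = \tau_L$ with $\vs_R \ne \vs_L$. A homoclinic orbit is excluded because $\omega_L$ lies on the one-dimensional unstable manifold and the nontrivial eigenvalue is simple with $\del_\tau\Fcal(\omega_L) < 0$, so $\gamma_>(\omega_L)$ leaves $\omega_L$ and, by the sign analysis above, $\tau$ decreases monotonically thereafter and cannot return to $\tau_L$ from above. For the degenerate endpoint $\tau_R = \tau_L$: there $\Fcal(\tau_L, \vs_R) = p(\tau_L, \vs_R) - p(\tau_L, \vs_L)$, which by \eqref{hypH2} is strictly positive when $\vs_R \ge \vs_L$ and $\vs_R \ne \vs_L$, contradicting $\Fcal(\omega_R) = 0$; and if $\vs_R = \vs_L$ then $\omega_R = \omega_L$, handled already.

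I expect the main obstacle to be the sign control of $\Fcal$ on the full region $\{\tau > \tau_L,\ \vs \ge \vs_L\} \cap \{\Hcal = 0\}$: one needs to rule out that $\Fcal$ vanishes at some interior point, which would permit the orbit to stall or reverse. This is where the convexity assumption \eqref{hypH1} and the asymptotic conditions \eqref{hypH3}--\eqref{hypH4} must be invoked to show that $\tau \mapsto \Fcal(\tau, \vs)$, for fixed $\vs \ge \vs_L$, behaves like a strictly convex function whose derivative at $\tau_L$ is already negative (or, more precisely, that the relevant branch stays negative until it exits the region), together with the monotone shift coming from $\vs \ge \vs_L$ via \eqref{hypH2}. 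Handling the interplay between the $\tau$- and $\vs$-dependence on the energy surface, rather than at fixed $\vs$, is the delicate point and will require using $\Hcal = 0$ to trade off the two.
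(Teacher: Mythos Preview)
Your argument rests on a sign error at its foundation. You assert that the Lax condition \eqref{lax} yields $\partial_\tau \Fcal(\omega_L) < 0$, but in fact it gives the opposite: since $c_L^2 = -\tau_L^2\, \partial_\tau p(\omega_L)$, the inequality $m > \rho_L c_L$ reads $m^2 > -\partial_\tau p(\omega_L)$, i.e.\ $\partial_\tau \Fcal(\omega_L) = m^2 + \partial_\tau p(\omega_L) > 0$. (The sentence just before the proposition contains a typo stating the inequality with the wrong sign; the proof of Lemma~\ref{lemfp}, where $f'(\tau_L)=m^2-(\rho_Lc_L)^2>0$ is used, has it right.) This positive sign is precisely what gives $\omega_L$ a one-dimensional \emph{unstable} manifold; with your sign that manifold would be stable and there would be no orbit leaving $\omega_L$ in forward time, so the whole discussion of $\gamma_>(\omega_L)$ would be vacuous.

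With the correct sign the picture reverses: near $\omega_L$ with $\tau>\tau_L$ one has $\Fcal>0$ and hence $\dot\tau>0$, not $<0$. The paper's Lemma~\ref{lemfp} extends this globally: $\Fcal(\omega)>0$ for every $\omega\in\Ncal\setminus\{\omega_L\}$, by first using $\partial_{s_i}p_i>0$ to bound $\Fcal(\tau,\vs)\ge\Fcal(\tau,\vs_L)=:f(\tau)$, and then using convexity of $p$ in $\tau$ together with $f'(\tau_L)>0$ to conclude $f(\tau)>0$ for $\tau>\tau_L$. Once this is known, $\Ncal$ contains no equilibrium other than $\omega_L$ and the proposition follows immediately. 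The energy constraint $\Hcal=0$ plays no role here; your plan to work on the energy surface and trade off $\tau$ against $\vs$ is unnecessary, and the ``delicate point'' you anticipate does not arise.
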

Consequently, only the second solution can give rise to a heteroclinic
orbit. Since the vector field $X:\Omega\to\RR^{N+1}$ is
Lipschitz-continuous, the uniqueness part of the celebrated
Picard-Lindel\"of theorem readily gives the following.

\begin{corollary}\label{corounik}
There exists at most one heteroclinic orbit of the dynamical system
(\ref{syssigma}) which connects $\omega_L$ in the past.
\end{corollary}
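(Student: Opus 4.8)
The plan is to combine the structural facts already established for the autonomous system \eqref{syssigma} near the critical point $\omega_L$ and to close with the uniqueness of integral curves of a locally Lipschitz field. Recall that, under the Lax condition \eqref{lax}, the non-degeneracy condition \eqref{nondegener} holds at $\omega_L$, so $LX(\omega_L)$ has a single nontrivial eigenvalue $\del_\tau\Fcal(\omega_L)/\mu(\omega_L)$, which is positive, while $0$ is semisimple of multiplicity $N$; hence $\Wcal^u(\omega_L)$ is a one-dimensional $C^1$ curve through $\omega_L$ tangent to $e_1$, and $\omega_L$ divides it into exactly the two semi-orbits $\gamma_>(\omega_L)$ and $\gamma_<(\omega_L)$, which are the only orbits of \eqref{syssigma} approaching $\omega_L$ as $t\to-\infty$ (this is the localisation statement recorded just before the Corollary).

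The argument is then short. Let $\gamma$ be any heteroclinic orbit connecting $\omega_L$ in the past, so that $\gamma(t)\to\omega_L$ as $t\to-\infty$. For $t_0$ sufficiently negative, the point $\gamma(t_0)$ lies in a prescribed small neighbourhood of $\omega_L$ and its negative semi-orbit still converges to $\omega_L$; by the localisation just recalled, $\gamma(t_0)$ therefore belongs to $\gamma_>(\omega_L)$ or to $\gamma_<(\omega_L)$. Since the vector field $X$ is locally Lipschitz on $\Omega$, the uniqueness part of the Picard--Lindel\"of theorem forces $\gamma$ to coincide with the semi-orbit through $\gamma(t_0)$, so that $\gamma\in\{\gamma_>(\omega_L),\gamma_<(\omega_L)\}$. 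Now Proposition~\ref{nonhetero} discards $\gamma_>(\omega_L)$, i.e.\ the solution converging to $\omega_L$ in negative time from the region $\tau\ge\tau_L$, as part of a heteroclinic orbit; hence $\gamma=\gamma_<(\omega_L)$. As $\gamma_<(\omega_L)$ is a single, uniquely determined orbit, there is at most one heteroclinic orbit connecting $\omega_L$ in the past.

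I do not expect a genuine obstacle: the substantive content lies in Proposition~\ref{nonhetero} and in the one-dimensionality of $\Wcal^u(\omega_L)$, both already available, and the Corollary is essentially a bookkeeping step. The only point deserving an explicit line is the reduction ``$\gamma(t)\to\omega_L$ backward $\Rightarrow$ $\gamma$ lies on $\Wcal^u(\omega_L)$''; this rests on the fact that, under \eqref{nondegener} (equivalently the strict Lax inequality in \eqref{lax}), the nontrivial eigenvalue of $LX(\omega_L)$ is nonzero, so the orbits converging to $\omega_L$ in negative time are exactly the two branches of the one-dimensional unstable manifold, with no extra center-manifold trajectory contributing --- which is precisely the localisation statement I would invoke verbatim rather than re-prove.
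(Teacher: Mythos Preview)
Your proposal is correct and follows essentially the same line as the paper: the paper also derives the corollary directly from Proposition~\ref{nonhetero} together with the one-dimensionality of $\Wcal^u(\omega_L)$ and the Picard--Lindel\"of uniqueness, in a one-sentence remark immediately preceding the statement. Your write-up is slightly more explicit about the reduction ``$\gamma(t)\to\omega_L$ as $t\to-\infty$ $\Rightarrow$ $\gamma\subset\Wcal^u(\omega_L)$'' and about why no center-manifold orbit interferes; this is a fair point to flag, since $\omega_L$ is non-hyperbolic, and it is resolved here by the fact that the $N$-dimensional center manifold locally coincides with the manifold of equilibria $\{\Fcal=0\}$, so it carries no nontrivial orbit.
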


The proof of Proposition~\ref{nonhetero} will follow from the following
statement.

\begin{lemma}\label{lemfp}
Any given state $\omega$ distinct from $\omega_L$ in the set
$\{\tau\ge\tau_L, \vs\ge\vs_L\}$, obeys $\Fcal(\omega)>0$.
\end{lemma}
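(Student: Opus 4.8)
The plan is to reduce the assertion to an elementary one–dimensional convexity estimate in the specific–volume variable. First I would exploit the monotonicity of the total pressure with respect to the entropies: by assumption \eqref{hypH2} each $p_i(\tau,\cdot)$ is strictly increasing, so the map $\vs\mapsto p(\tau,\vs)=\sum_{i=1}^N p_i(\tau,s_i)$ is strictly increasing in every component. Since $\vs\ge\vs_L$, this yields $\Fcal(\tau,\vs)\ge\Fcal(\tau,\vs_L)$ for every admissible $\tau$, with strict inequality as soon as $\vs\ne\vs_L$. In particular the case $\tau=\tau_L$, $\vs\ne\vs_L$ is already settled, since $\Fcal(\tau_L,\vs_L)=0$, and it remains only to prove that $g(\tau):=\Fcal(\tau,\vs_L)=p(\tau,\vs_L)-p(\tau_L,\vs_L)+m^2(\tau-\tau_L)$ is strictly positive for every $\tau>\tau_L$.

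Next I would analyse $g$ on $[\tau_L,+\infty)$. One has $g(\tau_L)=0$ and $g'(\tau)=\del_\tau p(\tau,\vs_L)+m^2$, hence $g'(\tau_L)=m^2+\del_\tau p(\omega_L)=\del_\tau\Fcal(\omega_L)$. The Lax condition \eqref{lax}, which for $\sigma=0$ reads $m>\rho_L c_L$, is equivalent (using $\rho_L\tau_L=1$ and $c_L^2=-\tau_L^2\,\del_\tau p(\omega_L)$) to $m^2>-\del_\tau p(\omega_L)$, that is, $g'(\tau_L)=\del_\tau\Fcal(\omega_L)>0$. On the other hand, assumption \eqref{hypH1} gives $g''(\tau)=\sum_{i=1}^N\del_\tau^2 p_i(\tau,(s_i)_L)>0$, so $g$ is strictly convex on $[\tau_L,+\infty)$. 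A strictly convex function that lies above its tangent line at $\tau_L$ then satisfies $g(\tau)\ge g(\tau_L)+g'(\tau_L)(\tau-\tau_L)>0$ for all $\tau>\tau_L$, which is the desired bound on $g$.

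Combining the two steps: if $\tau>\tau_L$ then $\Fcal(\omega)\ge g(\tau)>0$ for any $\vs\ge\vs_L$, while if $\tau=\tau_L$ then necessarily $\vs\ne\vs_L$ (because $\omega\ne\omega_L$) and $\Fcal(\omega)>\Fcal(\tau_L,\vs_L)=0$; hence $\Fcal(\omega)>0$ on all of $\{\tau\ge\tau_L,\ \vs\ge\vs_L\}$ minus the point $\omega_L$. I do not expect a genuine obstacle here: once the ingredients are assembled the argument is routine, and the only point deserving care is the sign bookkeeping when translating the Lax condition into $\del_\tau\Fcal(\omega_L)>0$, the inequality that makes $\omega_L$ a source along the $\tau$–direction and on which both this convexity estimate and the non-hyperbolicity analysis of $\omega_L$ rest.
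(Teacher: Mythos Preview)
Your proof is correct and follows essentially the same route as the paper: both arguments first use the entropy monotonicity \eqref{hypH2} to reduce to the one--variable function $\tau\mapsto\Fcal(\tau,\vs_L)$, and then combine the Lax condition (giving $\del_\tau\Fcal(\omega_L)=m^2-(\rho_Lc_L)^2>0$) with the convexity assumption \eqref{hypH1} to conclude positivity for $\tau>\tau_L$. The only cosmetic difference is that the paper phrases the convexity step as ``$f'$ increasing, hence $f'>0$, hence $f$ increasing'' rather than via the tangent--line inequality you use.
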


\begin{proof}
Observe that the positiveness assumption (\ref{hypH1}) on the
Gr\"uneisen numbers implies that for all $\omega$ in the region under
interest ($\vs\ge\vs_L$):
\be\label{sf}
f(\tau):=\Fcal(\tau,\vs_L)\le \Fcal(\omega)
\ee
with equality iff $\vs=\vs_L$. In particular, for $\tau=\tau_L$ we have 
$\Fcal(\tau_L,\vs)>0$ as soon as $\vs>\vs_L$. Next, considering
$\tau>\tau_L$, the following identity
$$
f'(\tau)=m^2+\del_\tau p(\tau,\vs_L) 
$$
clearly yields, under the assumption (\ref{hypH2}) of positive fundamental
derivatives, that $\del_{\tau\tau}^2 p(\tau,\vs)>0$ for all
$\omega\in\Omega$ andn therefore, 
$$
f'(\tau)\ge f'(\tau_L)=m^2-(\rho_Lc_L)^2>0
$$
thanks to the Lax condition (\ref{lax}). It immediately
follows that $f(\tau)\ge f(\tau_L)=\Fcal(\omega_L)=0$ as soon as
$\tau\ge\tau_L$ with equality to zero iff $\tau=\tau_L$. The
inequality (\ref{sf}) then gives the required conclusion.
\end{proof}

\begin{proof}[Proof of Proposition~\ref{nonhetero}]
Let $\vn_{\omega}$ be the unit inward normal at the following
hypersurfaces $\{\tau=\tau_L,\vs\ge\vs_L\}$ and
$\{\tau\ge\tau_L,\vs=\vs_L\}$ for all states in these sets. Note that
these sets are the lower part of the boundary of $\Ncal$. 
In view of the definition of the
vector field, 
 Lemma \ref{lemfp} implies that, for such states, 
 $X(\omega)\cdot\vn_{\omega}\ge 0$. As a (well-known) 
consequence, $\Ncal$ stays invariant for all positive semi-flows. The
required conclusion follows again from Lemma \ref{lemfp} which says
that no critical point exists in $\Ncal$.
\end{proof}

We now stress another important consequence of the local
properties of the phase portrait at the rest point $\omega_L$. By
opposition to the states in the orbit $\gamma_>(\omega_L)$; the second
orbit $\gamma_<(\omega_L)$ emanating from the region $\tau\le\tau_L$
is by definition made of states $\omega$ that at least when close
enough to but distinct from $\omega_L$ give rise to a compression:
namely locally $\Fcal(\omega)<0$ in view of the governing equation for
$\tau$. In turn, this simple observation implies that the viscous
profile under study must remain uniformly
compressive. This claim is a consequence of the following statement.

\begin{lemma}\label{lemI}
The following set 
\be\label{defI}
\Ical=\left\{ \omega\in\Omega : \tau<\tau_L,~\vs\ge\vs_L,~\Fcal(\omega)\le 0 \right\}
\ee
is positively invariant under the action of the dynamical system
(\ref{syssigma}).
\end{lemma}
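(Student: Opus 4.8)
The plan is to show that $\Ical$ is positively invariant by a standard argument on the sign of the vector field along the relevant pieces of $\del\Ical$. The boundary of $\Ical$ consists of three types of faces: the hyperplane piece $\{\tau=\tau_L,\ \vs\geq\vs_L\}$, the coordinate hyperplane pieces $\{\tau<\tau_L,\ s_i=(s_i)_L\ \text{for some }i\}$ where a single entropy component attains its minimal value, and the ``characteristic'' face $\{\tau<\tau_L,\ \vs\geq\vs_L,\ \Fcal(\omega)=0\}$. On each of these I would check that the vector field $X$ of \eqref{syssigma} points into $\Ical$ (or is tangent to the boundary), so that no orbit can leave $\Ical$; this is the usual Nagumo/flow-invariance criterion, and since $X$ is $C^1$ it suffices to verify $X(\omega)\cdot\vn_\omega\geq 0$ for the inward normal $\vn_\omega$ at boundary points.

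First I would handle the entropy faces $\{s_i=(s_i)_L\}$: here the inward normal is $+e_{i+1}$ and the corresponding component of $X$ is $\dot s_i = \frac{\mu_i}{\mu^2 T_i}\Fcal^2 \geq 0$ by \eqref{condvisco} and \eqref{Tpositif}, so the flow can never decrease any $s_i$ below $(s_i)_L$ — this is exactly the mechanism already used in Lemma~\ref{leminvariant}, and it shows the constraint $\vs\geq\vs_L$ is preserved. Next, the $\Fcal=0$ face: the inward normal is proportional to $-\nabla\Fcal$ (pointing toward $\Fcal<0$), so I need $X\cdot\nabla\Fcal\leq 0$ on this face, i.e. $\frac{d}{dt}\Fcal(\omega_0\cdot t)\leq 0$ whenever $\Fcal=0$. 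Computing $\frac{d}{dt}\Fcal = \del_\tau\Fcal\,\dot\tau + \sum_i \del_{s_i}\Fcal\,\dot s_i$ and using $\dot\tau = \Fcal/\mu = 0$ on this face, this reduces to $\sum_i \del_{s_i}p(\tau,\vs)\,\dot s_i \leq 0$ there. But $\del_{s_i}p = \del_{s_i}p_i > 0$ by \eqref{hypH2} while $\dot s_i \geq 0$, so the sum is $\geq 0$, not $\leq 0$ — which looks like it goes the wrong way. This is the main obstacle, and I expect it is resolved by the sign of $\dot\tau$ just inside $\Ical$: on $\{\Fcal=0\}\cap\del\Ical$ one actually has $\dot\tau=0$ and the orbit is instantaneously tangent to this face, and one must look at the second-order behavior, or — more likely — use that on the face with $\tau<\tau_L$ the relevant part of this set is empty or reduces to $\omega_L$ by Lemma~\ref{lemfp}. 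Indeed Lemma~\ref{lemfp} asserts $\Fcal(\omega)>0$ strictly for $\omega\neq\omega_L$ in $\{\tau\geq\tau_L,\vs\geq\vs_L\}$; the analogous statement I would want here is that points with $\tau<\tau_L$, $\vs\geq\vs_L$ and $\Fcal(\omega)=0$ can only be approached from the region $\tau<\tau_L$ in a way compatible with invariance, so the $\Fcal=0$ face is crossed only inward.

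Concretely, I would argue as follows on the $\Fcal=0$ portion: parametrize by noting $\dot\tau = \Fcal/\mu$, so $\Fcal<0$ is precisely the region where $\tau$ is decreasing; an orbit sitting in $\{\tau<\tau_L\}$ with $\Fcal$ reaching $0$ from below would have $\tau$ at a local minimum there. Differentiating, at such a point $\ddot\tau = \frac{1}{\mu}\frac{d}{dt}\Fcal + \Fcal\,\frac{d}{dt}(1/\mu) = \frac{1}{\mu}\sum_i\del_{s_i}p_i\,\dot s_i \geq 0$, consistent with a minimum, but then the orbit re-enters $\Fcal<0$ immediately (or stays on $\Fcal=0$, forcing all $\dot s_i=0$, hence $\mu_i=0$ for all $i$, contradicting $\mu>0$ unless the orbit is stationary). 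Hence once $\Fcal\leq 0$ with $\tau<\tau_L$ and $\vs\geq\vs_L$, the orbit cannot escape through $\{\Fcal=0\}$ into $\{\Fcal>0,\tau<\tau_L\}$ without first violating $\tau<\tau_L$, and the $\{\tau=\tau_L\}$ face is handled separately: there $\vn_\omega = -e_1$ (inward means decreasing $\tau$), and $X\cdot\vn_\omega = -\dot\tau = -\Fcal/\mu$, which is $\geq 0$ iff $\Fcal\leq 0$ — and on $\{\tau=\tau_L\}\cap\overline\Ical$ the constraint $\Fcal\leq 0$ holds by continuity, with $\Fcal=0$ only at $\omega_L$ by Lemma~\ref{lemfp}. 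Assembling these three boundary estimates via the flow-invariance theorem for $C^1$ (indeed Lipschitz) vector fields yields that $\Ical$ is positively invariant, completing the proof. The delicate point throughout is the $\Fcal=0$ face, where the naive normal-velocity computation is inconclusive and one must exploit the geometric meaning of $\dot\tau=\Fcal/\mu$ together with Lemma~\ref{lemfp}.
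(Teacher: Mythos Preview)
The main gap is that you overlook the simple structural fact that makes this lemma almost immediate: on the face $\{\Fcal(\omega)=0\}$ the \emph{entire} vector field of \eqref{syssigma} vanishes. Indeed $\dot\tau=\Fcal/\mu=0$ and $\dot s_i=\tfrac{\mu_i}{\mu^2 T_i}\Fcal^2=0$ for every $i$, so each point with $\Fcal=0$ is an equilibrium. This is precisely what the paper exploits: if $\Fcal(\omega_0)=0$ the orbit is stationary and trivially stays in $\Ical$; if $\Fcal(\omega_0)<0$, then $\Fcal(\omega_0\cdot t)$ can never reach $0$ at a finite time, since by Lipschitz uniqueness a non-equilibrium orbit cannot hit an equilibrium in finite time. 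Hence $\Fcal$ remains strictly negative, whereupon $\dot\tau=\Fcal/\mu<0$ preserves $\tau<\tau_L$ and $\dot s_i\geq 0$ preserves $\vs\geq\vs_L$. No boundary-flux computation on $\{\Fcal=0\}$ is needed.

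Your computation of $\tfrac{d}{dt}\Fcal=\sum_i\del_{s_i}p_i\,\dot s_i$ on the face $\Fcal=0$ is correct, but you then treat this sum as a potentially positive obstruction: in fact every $\dot s_i$ vanishes there (because $\dot s_i\propto\Fcal^2$), so the sum is identically $0$, which is just another manifestation of the face consisting of equilibria. The subsequent second-order argument (``$\ddot\tau\geq 0$, consistent with a minimum, then the orbit re-enters $\Fcal<0$ immediately'') is unjustified --- from $\dot\tau=0$, $\ddot\tau\geq 0$ you cannot conclude re-entry into $\{\Fcal<0\}$ --- and the parenthetical ``forcing all $\dot s_i=0$, hence $\mu_i=0$ for all $i$'' is backwards: $\dot s_i=0$ follows from $\Fcal^2=0$ regardless of $\mu_i$. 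Your Nagumo/boundary framework could be salvaged, but the paper's argument via the equilibrium structure of $\{\Fcal=0\}$ and finite-time unreachability of rest points is both shorter and avoids the tangential-boundary difficulty entirely.
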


\begin{proof}
The above assertion is trivial for states $\omega_0\in\Ical$ which
satisfy $\Fcal(\omega_0)=0$. Considering states $\omega_0$ with
the property $\Fcal(\omega_0)<0$, we observe that the positive semi-flow
$\omega_0\cdot t$ necessarily satisfies $\Fcal(\omega_0\cdot t)<0$ for
all time in $[0,t^+(\omega_0))$. Indeed assuming the existence of a
finite time $t_c$ in this interval with the property
$\Fcal(\omega_0\cdot t_c)=0$ would result in a critical point
$\omega_0\cdot t_c$ for the dynamical system (\ref{syssigma}). But by
the Lipschitz-continuity property of the vector field in $\Omega$, it
is well-known that such a point cannot be reached in finite time.
\end{proof}

The orbit $\gamma_<(\omega_L)$ is therefore trapped in the region
$\Ical$. We now establish that, in addition, this orbit must remain
 within a compact subset $K$ of $\Ical$. This will imply that
$\gamma_<(\omega_L)$ is relatively compact. Well-known considerations
imply that the associated
$\varpi$-limit set is nonempty, compact and connected. The existence
of $K$ primary stems from the following result.

\begin{lemma}\label{lemvide}
Let $\omega_0$ be a given state in $\Omega$. Then the positive
semi-orbit $\gamma^+(\omega_0)$ has no limit point in the set
$\{\tau=0\}$.
\end{lemma}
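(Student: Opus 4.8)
The plan is to show that the positive semi-orbit $\gamma^+(\omega_0)$ cannot accumulate on the hyperplane $\{\tau=0\}$ by exhibiting a function that blows up there but stays bounded along the flow. The natural candidate is the energy invariant $\Hcal$ from Proposition~\ref{propHconstant}, combined with the asymptotic conditions on the internal energies. Recall $\Hcal(\omega)=e(\omega)-e(\omega_L)-\frac{m^2}{2}(\tau^2-\tau_L^2)+(m^2\tau_L+p(\omega_L))(\tau-\tau_L)$, and along the flow $\Hcal(\omega_0\cdot t)=\Hcal(\omega_0)$ is constant. The key point is that $e(\omega)=\sum_i e_i(\tau,s_i)$, and by \eqref{thermo1} we have $e_i(\tau,s_i)\to+\infty$ as $\tau\to 0^+$ for each fixed $s_i$; moreover, since $(\tau,s_i)\mapsto e_i(\tau,s_i)$ is convex and $\del e_i/\del s_i=T_i>0$, the quantities $s_i$ are nondecreasing along the flow (from the last $N$ equations of \eqref{syssigma}), so $s_i\ge (s_i)_0$ for all $t\ge 0$.

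First I would record that, along $\gamma^+(\omega_0)$, the entropy components satisfy $\vs\ge\vs_0$ (monotonicity from the sign of the right-hand side of the $\dot s_i$ equations, exactly as in Lemma~\ref{leminvariant}). Next, I would argue that for $\vs$ in the relevant range — bounded below by $\vs_0$, and bounded above because $\Hcal$ is conserved while $e$ is increasing in each $s_i$ (via $T_i>0$) and $\tau$ cannot blow up for the same reason the lemma already uses implicitly — the map $\tau\mapsto e(\tau,\vs)$ still tends to $+\infty$ as $\tau\to 0^+$, uniformly over the compact range of $\vs$. More precisely, fix a limit point $\bar\omega=(\bar\tau,\bar\vs)$ of $\gamma^+(\omega_0)$ with $\bar\tau=0$; passing to a sequence of times $t_n$ with $\omega_0\cdot t_n\to\bar\omega$, continuity of $\Hcal$ would force $\Hcal(\omega_0\cdot t_n)\to\Hcal(\bar\omega)$, but the right-hand side is $+\infty$ because $e(\tau,\vs)\to+\infty$ as $\tau\to 0^+$ while the remaining terms $-\frac{m^2}{2}(\tau^2-\tau_L^2)+(m^2\tau_L+p(\omega_L))(\tau-\tau_L)$ stay bounded. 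This contradicts the constancy $\Hcal(\omega_0\cdot t_n)=\Hcal(\omega_0)<+\infty$.

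The main obstacle is making the blow-up of $e$ genuinely uniform: \eqref{thermo1} only asserts $e_i(\tau,s_i)\to+\infty$ as $\tau\to 0^+$ for each fixed $s_i$, so I must control the behaviour as $(\tau,s_i)$ jointly approaches $(0,\bar s_i)$. Here I would use convexity of $e_i$ together with $\del e_i/\del s_i=T_i>0$: monotonicity in $s_i$ gives $e_i(\tau,s_i)\ge e_i(\tau,(s_i)_0)$ whenever $s_i\ge (s_i)_0$, which immediately reduces the joint limit to the one-variable statement $e_i(\tau,(s_i)_0)\to+\infty$ as $\tau\to 0^+$, already granted by \eqref{thermo1}. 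The only remaining subtlety is confirming that $\vs$ does stay bounded along $\gamma^+(\omega_0)$; but since each $s_i$ is nondecreasing and $\Hcal$ is conserved, an upper bound on $s_i$ follows from the fact that $e$ grows without bound in each $s_i$ (again by $T_i>0$) while the $\tau$-dependent terms in $\Hcal$ are bounded below on $\tau>0$ — so an unbounded $s_i$ would contradict $\Hcal=\Hcal(\omega_0)$. Assembling these observations yields the claim, and I would present it as the short contradiction argument above, relegating the uniform-lower-bound estimate on $e$ to a one-line consequence of monotonicity in $\vs$.
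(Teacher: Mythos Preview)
Your argument is correct and follows essentially the same route as the paper: conservation of $\Hcal$, the lower bound $\vs\ge\vs_0$ along the positive semi-flow, and monotonicity via $T_i>0$ to reduce to the one-variable blow-up $e(\tau,\vs_0)\to+\infty$ (equivalently $\Hcal(\tau,\vs_0)\to+\infty$) as $\tau\to 0^+$, which contradicts $\Hcal(\omega_0\cdot t)=\Hcal(\omega_0)$. Your detour through an upper bound on $\vs$ is unnecessary here---once you have $e_i(\tau,s_i)\ge e_i(\tau,(s_i)_0)$ the contradiction follows directly without any compactness in $\vs$; the paper uses exactly this shortcut and defers the upper bound on $\vs$ to the next proposition, where it is actually needed.
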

This assertion immediately gives that the orbit $\gamma_<(\omega_L)$
has the same property.
\begin{proof}
For all time $t$ in $[0,t^+(\omega_0))$, the positive semi-flow
$\omega_0\cdot t$ is known to obey $\Hcal(\omega)=\Hcal(\omega_0)<\infty$
and $\vs\ge\vs_0$. Arguing about the positivity of all the temperatures
$T_i=\del_{s_i}\Hcal$, we immediately get 
$$
h(\tau):=\Hcal(\tau,\vs_0)\le \Hcal(\tau,\vs),
$$
with the property that $h(\tau)$ goes to infinity as $\tau$ goes to
zero (see the asymptotic condition (\ref{hypH3})).

Assume that $\overline{\gamma^+(\omega_0)}\cap\{\tau=0\}$ is
nonempty. As a consequence, for all $\varepsilon>0$ there exists
$t_{\varepsilon}\in (0,t^+(\omega_0))$ such that
$0<\tau|_{\omega_0\cdot t_{\varepsilon}}<\varepsilon$. Necessarily
there exists $\varepsilon_0>0$ so that $h(\tau|_{\omega_0\cdot
  t_{\varepsilon_0}})>\Hcal(\omega_0)$ and this rises the contradiction
with the preservation of $\Hcal(\omega_0)$ along the orbit.
\end{proof}

We have proven that any given positive semi-flow
of the dynamical system (\ref{syssigma}) with initial data $\omega_0$
in $\Omega$ satisfies $t^+(\omega_0)=\infty$ (since $\vs>\vs_0$).
We now conclude with the existence (and therefore uniqueness) of the
required viscous profile.

\begin{proposition}\label{propfinex}
There exists a state $\omega_R$ in $\Hcal^{-1}(0)\cap\Fcal^{-1}(0)$ which
is connected by $\gamma_<(\omega_L)$ in the future.
\end{proposition}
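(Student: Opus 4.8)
The plan is to show that the $\varpi$-limit set of the relatively compact orbit $\gamma_<(\omega_L)$ consists of a single equilibrium point $\omega_R$ satisfying $\Fcal(\omega_R)=0$, and that it lies on $\Hcal^{-1}(0)$. From the lemmas already established we know that $\gamma_<(\omega_L)$ stays trapped inside $\Ical \cap \Omega(\omega_L)$ (combining Lemma~\ref{lemI} with the invariance in Lemma~\ref{leminvariant} and the energy identity $\Hcal(\omega)=0$ from \eqref{H0}), and that it has no limit point on $\{\tau=0\}$ by Lemma~\ref{lemvide}. The $\vs$-components are monotone nondecreasing along the flow (the last $N$ components of the vector field are $\geq 0$), and since $\vs$ cannot leave $\Omega$ through $\{\tau=0\}$, the next step is to bound $\vs$ from above: I would use $\Hcal(\omega)=0$ together with the convexity of $e$ in $(\tau,\vs)$ and the lower bound $\tau\geq\tau_{\min}>0$ coming from Lemma~\ref{lemvide} to conclude that $\vs$ stays in a compact box. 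This gives a compact positively invariant set $K\subset\Ical$ containing the orbit, hence $\varpi(\omega_L)\neq\emptyset$ is compact and connected.

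Next I would identify the $\varpi$-limit set. The natural Lyapunov function is the total-entropy functional $\Scal(\omega):=\sum_i (\rho s_i)u = m\sum_i s_i$, or equivalently $\sum_i s_i$ itself, which is monotone nondecreasing along the flow with $\frac{d}{dt}\sum_i s_i = \sum_i \frac{\mu_i}{\mu^2 T_i}\Fcal^2 \geq 0$, vanishing exactly when $\Fcal(\omega)=0$ (using $\mu(\omega)>0$ and $\mu:=\sum_i\mu_i>0$). Since $\sum_i s_i$ is bounded above on $K$, it converges, so by the LaSalle invariance principle $\varpi(\omega_L)$ is contained in the largest invariant subset of $\{\omega\in K:\Fcal(\omega)=0\}$. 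But a point with $\Fcal(\omega)=0$ is an equilibrium of \eqref{syssigma}, so every point of $\varpi(\omega_L)$ is a rest point. It remains to check that there is exactly one such point in $K$: on the slice $\vs=\vs_R$ (the limiting entropy value, which is constant on $\varpi(\omega_L)$ since connected sets of equilibria in this problem are $\vs$-level sets and $\sum s_i$ is constant there), the equation $\Fcal(\tau,\vs_R)=0$ reads $p(\tau,\vs_R)-p(\tau_L,\vs_L)+m^2(\tau-\tau_L)=0$; the convexity hypothesis \eqref{hypH2} makes $\tau\mapsto\Fcal(\tau,\vs_R)$ strictly convex, so it has at most two roots, and the compressive trapping $\Fcal<0$ along $\gamma_<(\omega_L)$ together with $\tau<\tau_L$ selects the unique root in $\{\tau<\tau_L\}$. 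Hence $\varpi(\omega_L)=\{\omega_R\}$ and the orbit converges to it.

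Finally I would record the two required properties of $\omega_R$. Membership in $\Hcal^{-1}(0)$ is immediate since $\Hcal\equiv 0$ along the orbit (this is \eqref{H0}) and $\Hcal$ is continuous, so $\Hcal(\omega_R)=0$; membership in $\Fcal^{-1}(0)$ is exactly the equilibrium condition just obtained. I would also note in passing that $\del_\tau\Fcal(\omega_R)>0$: this follows from the strict convexity of $\tau\mapsto\Fcal(\tau,\vs_R)$ together with the fact that the other root lies to the left, so that at the right-most root in the relevant range the derivative is positive; this is consistent with the earlier remark that a connecting point in the future must satisfy $\del_\tau\Fcal(\omega_R)>0$, and it is what later identifies $\omega_R$ as a stable (rather than unstable) manifold point, yielding the Lax inequality \eqref{lax_bis}.

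The main obstacle I anticipate is the a priori upper bound on $\vs$ along the orbit — i.e.\ ruling out the escape $\sum_i s_i\to+\infty$ while $\tau$ stays bounded away from $0$ and $\infty$. The energy constraint $\Hcal=0$ links $\vs$ and $\tau$ through the internal energy $e(\tau,\vs)$, and one must exploit the growth condition $\lim_{s_i\to+\infty}e_i(\tau,s_i)=+\infty$ from \eqref{thermo1} (uniformly on compact $\tau$-intervals) to force a contradiction; getting this uniformity cleanly, together with confirming that $\tau$ itself cannot run off to $+\infty$ (for which \eqref{hypH3} on $\sum p_i\to 0$ and the structure of $\Fcal$ are the relevant ingredients), is the delicate part. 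Everything after the compactness of $K$ is a fairly standard LaSalle-plus-uniqueness-of-equilibrium argument.
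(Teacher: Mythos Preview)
Your argument is correct and follows essentially the same route as the paper: bound $\vs$ from above via the energy constraint $\Hcal=0$ (using that $\Hcal(\tau,\vs)\to+\infty$ as any $s_i\to+\infty$ for $\tau$ in a compact interval), deduce relative compactness of $\gamma_<(\omega_L)$, then apply LaSalle to locate the $\varpi$-limit set inside $\{\Fcal=0\}\cap\Hcal^{-1}(0)$. The only substantive difference is the choice of Lyapunov function: the paper simply uses $\tau$, which is monotone nonincreasing on $\Ical$ since $\dot\tau=\Fcal/\mu\le 0$ there, whereas you use $\sum_i s_i$. Both choices work and yield the same conclusion; the paper's is marginally cleaner because $\tau$ is already a coordinate and its derivative vanishes exactly on $\{\Fcal=0\}$ without needing the positivity of some $\mu_i/T_i$.

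Two minor remarks. Your concern about $\tau\to+\infty$ is unnecessary: the orbit lives in $\Ical$, where $\tau<\tau_L$ by definition. And the uniqueness of $\omega_R$ that you append is not part of this proposition---existence is all that is claimed, uniqueness having been secured earlier via the one-dimensionality of $\Wcal^u(\omega_L)$ (Corollary~\ref{corounik}). Your uniqueness sketch in fact has a gap: for $\vs_R\neq\vs_L$ \emph{both} roots of $\tau\mapsto\Fcal(\tau,\vs_R)$ lie in $(0,\tau_L)$ (this is established in the next section of the paper), so ``selects the unique root in $\{\tau<\tau_L\}$'' does not isolate a single point. This does not affect the existence result.
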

\begin{proof}

We first establish that the specific entropy vector $\vs$ stays
upper-bounded along all positive semi-flows with initial data in
$\Ical$. For fixed $\tau$ in $(0,\tau_L)$, the conditions
(\ref{Tpositif}) and (\ref{hypH1}) shows that $\Hcal(\tau,\vs)$ rises
arbitrarily with $\vs$. The
same steps as the ones involved in the previous proof, apply to give
the required result. As a consequence, $\gamma_<(\omega_L)$ must be
 included in a compact subset, namely $K$, of the
positively invariant region $\Ical$. This orbit is, therefore, relatively
compact and its $\varpi$-limit set is non-empty. This limit set must be
 included in $\Hcal^{-1}(0)$. To conclude,  observe
that $\tau$, when understood as a mapping of the variable $\omega$, 
trivially yields a Lyapunov function on $\Ical$ where, by
construction, $\Fcal(\omega)\le 0$. The LaSalle invariance principle
applied in connection with this Lyapunov function then ensures that
the non-empty $\varpi$-limit set is included in $\{\omega\in\Ical :
\Fcal(\omega)=0\}$. This establishes the existence of $\omega_R$.
\end{proof}


\section{End states for viscous layers with varying
  viscosity}
\label{EXP-0}

The existence (and uniqueness up to translation) of traveling wave
solutions to the multi-pressure Navier-Stokes equations was established
 in the previous section for $N$ viscosity laws
satisfying the non degeneracy condition (\ref{condvisco}). Being
given a fixed state $\omega_L\in\Omega$ and a velocity $\sigma$
according to the condition (\ref{lax}), we aim here at characterizing
the subset of $\Omega$ made of all the states $\omega_R$ that can be
reached in the future by a traveling wave with speed $\sigma$ and
connecting $\omega_L$ in the past. We naturally expect the exit state
$\omega_R$ to depend on the specific form of the $N$-uple
of viscosity laws. The dynamical system (\ref{syssigma}) shows that
such a dependence is in the ratios of the
viscosity laws. As a consequence, possible states $\omega_R$ to be
reached in the future from a fixed $\omega_L$ (at some speed
$\sigma$) generically depend on $N-1$ degrees of freedom. The set of
exit states we are seeking is thus expected to have $(N-1)$-dimension.

It will be convenient to study the projection of this set onto the
following positive cone of $\RR^N$ (understod as the space of the
specific entropies $\vs=(s_1,...,s_N)$) with origin $\vs_L$:
\be\label{def_cone}
S^+(\vs)=\left\{
\vs\in\RR^N ~/~ \vs=\vs_L+\lambda \va,~\va\in
\Scal_+^N,~\lambda\ge 0
\right\}.
\ee
Here, $\Scal^N_+$ denotes the (positive) part of the unit sphere in
$\RR^N$ defined by
$$
\Scal^N_+ = \left\{ \va\in\RR^N_+;~\parallel \va  \parallel = 1\right\}.
$$

For all possible entropies $\vs_R$ in the cone (\ref{def_cone}), the existence
 simply comes from the property that the
heterocline solutions of Theorem~\ref{theo_main} obey
$\vs_R\ge \vs_L$ and a strict inequality holds for (at least) one specific
entropy $s_i$ ($1\le i\le N$).

We show hereafter that the projection in the half cone
(\ref{def_cone}) of the states
$\omega_R$ that can be reached when varying the definition of the $N$
viscosity laws, is a smooth manifold with co-dimension one:
\be\label{newdefCcal}
\Ccal=\big\{\vs\in S^+(\vs) ~/~\vs= \vs_L + \Lambda_0(\va)\va,~\va\in \Scal^N_+\big\},
\ee
for some suitable mapping $\Lambda_0(\va):\va\in \Scal^N_+ \mapsto \Lambda_0(\va)\in\RR$
which precise definition will be given latter on. The derivation of
the proposed manifold is performed in two steps. In a first step, we
analyse closely all the critical points $(\tau_c,\vs_c)$ of the
dynamical system (\ref{syssigma}), {\it i.e.} the solutions of 
\be
\label{eq1critique}
\Fcal(\tau_c,\vs_c) =0,
\ee
without reference to a precise $N$-uple of viscosity laws. 

We
emphasize that eligible critical points that can be reached from the
state $\omega_L$ in the past must preserve the total energy as stated
in (\ref{H0}). Such states must therefore solve in addition 
\be
\label{eq2critique}
\Hcal(\tau_c,\vs_c) =0,\quad\mbox{with}\quad
\vs_R\ge \vs_L.
\ee
Analyzing the solution of (\ref{eq1critique})-(\ref{eq2critique}) will
give birth to the manifold (\ref{newdefCcal}).

In a second step, we will establish that any given value $\vs$ in the
proposed manifold can be actually achieved for at least one suitable
$N$-uple of viscosity laws. As a consequence, the manifold
(\ref{newdefCcal}) is entirely made of all the specific entropy
$\vs_R$ that can be reached in the future by a traveling wave
solution with speed $\sigma$ and issued from $\omega_L$, when varying
the definition of the $N$ viscosity laws. 

Let us outline the content
of this section. We first analyze the mappings $\vs\in
S^+(\vs)\mapsto\tau_{\Fcal}(\vs)\in\RR_+$ that solve 
\be
\label{eq3critique}
\Fcal(\tau_{\Fcal}(\vs),\vs) =0.
\ee
We then characterize the mapping $\vs\in
S^+(\vs)\mapsto\tau_{\Hcal}(\vs)\in\RR_+$ solving
\be
\label{eq4critique}
\Hcal(\tau_{\Hcal}(\vs),\vs) =0.
\ee
Equipped with these two families of functions we will study for
existence values of the specific entropy $\vs_c$ in $S^+(\vs)$ that
satisfy $\tau_{\Fcal}(\vs_c)=\tau_{\Hcal}(\vs_c)$, namely values of
$\vs$ that simultaneously solve (\ref{eq3critique}) and
(\ref{eq4critique}). We now state the main result of this section.
\begin{theorem}
\label{main_cp}

Assume that (\ref{thermo1})-(\ref{hypH5}) on the
thermodynamics are satisfied. Then there exists a
unique map $\Tcal \in \Ccal^0\big({\bar \Kcal},\RR^*_+\big)\cap 
{\Ccal}^1\big(\Kcal,\RR^*_+\big)$ where $\Kcal\subset S^+(\vs)$ reads 
\be
\Kcal = \big\{ \vs\in S^+(\vs) ~/~ \vs = \vs_L + \lambda \va, ~ \va
\in \Scal^N_+,~\lambda\in ]0, \Lambda_0(\va)[\big\},
\label{defK}
\ee
for some smooth application $\Lambda_0 \in
\Ccal^1\big(\Scal^N_+,\RR^*_+\big)$ with the following properties:
\begin{enumerate}
\item[(i)]  $ \Hcal \big(\Tcal(\vs),\vs\big)=0$, for all $\vs \in {\bar \Kcal}$,
\item[(ii)] $ \Fcal \big(\Tcal(\vs),\vs\big)=0$, for all $\vs\in \Ccal$
  where
\be
\label{defCcal}
\Ccal = \big\{ \vs\in S^+(\vs) ~ /~ \vs=\vs_L + \Lambda_0(\va)\va,~\va\in \Scal^N_+\big\}.
\ee
\end{enumerate}
In addition $\Tcal$ obeys
\begin{enumerate}
\item[(iii)] $\Fcal \big(\Tcal(\vs_L),\vs_L\big)=0$,
\item[(iv)]  $\Fcal \big(\Tcal(\vs),\vs\big) < 0$, for all $\vs \in \Kcal$.
\end{enumerate}
\end{theorem}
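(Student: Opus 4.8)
The argument rests on one structural identity and two monotonicities. First, $\del_\tau \Hcal(\tau,\vs) = -\Fcal(\tau,\vs)$, which is immediate from $\del_\tau e(\tau,\vs) = -\sum_{i}p_i(\tau,s_i) = -p(\tau,\vs)$. Second, by (\ref{hypH1}) the map $\tau \mapsto \Fcal(\tau,\vs)$ is strictly convex, while since $\del_{s_i}\Hcal = T_i > 0$ and $\del_{s_i}\Fcal = \del_{s_i}p_i > 0$ (cf.\ (\ref{Tpositif}) and (\ref{hypH2})), both $\Hcal$ and $\Fcal$ are strictly increasing in each $s_i$. The plan is to use these to reduce everything to a one-dimensional study. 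I would first fix $\vs$ and examine $\tau \mapsto \Fcal(\tau,\vs)$: by (\ref{hypH3}) together with positivity of the pressures (cf.\ (\ref{Tpositif})) it tends to $+\infty$ both as $\tau \to 0^+$ and as $\tau \to +\infty$ (in the second limit the term $m^2\tau$ dominates), so by strict convexity it has a unique minimiser $\tau^*(\vs)$; put $\theta(\vs) := \Fcal(\tau^*(\vs),\vs)$. Written for $\sigma = 0$, the Lax condition (\ref{lax}) reads precisely $\del_\tau\Fcal(\tau_L,\vs_L) = m^2 - (\rho_L c_L)^2 > 0$, which forces $\theta(\vs_L) < 0$ and identifies $\tau_L$ as the larger of the two simple roots $\tau_-(\vs_L) < \tau^*(\vs_L) < \tau_+(\vs_L) = \tau_L$ of $\Fcal(\cdot,\vs_L) = 0$. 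Whenever $\theta(\vs) < 0$ these two roots $\tau_\pm(\vs)$ persist, and by the implicit function theorem (using $\del_\tau\Fcal \neq 0$ at a simple root and $\del^2_{\tau\tau}\Fcal \neq 0$ at $\tau^*$) they depend on $\vs$ as smoothly as the thermodynamic data.

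Next I would track, along each ray $\vs(\lambda) = \vs_L + \lambda\va$ with $\va \in \Scal^N_+$, the two scalar functions $\phi_\pm(\lambda) := \Hcal(\tau_\pm(\vs(\lambda)),\vs(\lambda))$. Because $\del_\tau\Hcal = -\Fcal$ vanishes at $\tau_\pm$, differentiating kills the $d\tau_\pm/d\lambda$ contribution and gives $\phi_\pm'(\lambda) = \sum_i a_i\,T_i(\tau_\pm(\vs(\lambda)),s_i) > 0$; thus $\phi_\pm$ are strictly increasing, $\phi_+(0) = \Hcal(\omega_L) = 0$, and $\phi_-(0) = \Hcal(\tau_-(\vs_L),\vs_L) = \int_{\tau_-(\vs_L)}^{\tau_L}\Fcal(\tau,\vs_L)\,d\tau < 0$ since $\Fcal(\cdot,\vs_L) < 0$ strictly between its roots. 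Likewise $\theta(\vs(\lambda))$ is strictly increasing in $\lambda$, and using (\ref{hypH5}) to drive $\del_\tau p(\cdot,\vs(\lambda)) \to -\infty$ pointwise one gets $\tau^*(\vs(\lambda)) \to +\infty$ and hence $\theta(\vs(\lambda)) \to +\infty$; so there is a finite first value $\Lambda_1(\va)$ with $\theta(\vs(\Lambda_1(\va))) = 0$, beyond which $\Fcal(\cdot,\vs(\lambda))$ has no root. On $[0,\Lambda_1(\va))$ both roots exist, $\phi_+ > 0$ on $(0,\Lambda_1(\va))$, and $\phi_-$ increases to the positive limit $\lim_{\lambda\to\Lambda_1(\va)^-}\phi_+(\lambda)$ (the two roots merging into $\tau^*$); hence $\phi_-$ has a unique zero $\Lambda_0(\va) \in (0,\Lambda_1(\va))$. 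I would \emph{define} $\Lambda_0(\va)$ by this equation, i.e.\ by $\Hcal(\tau_-(\vs_L + \Lambda_0(\va)\va),\vs_L + \Lambda_0(\va)\va) = 0$; since the $\lambda$-derivative of $\phi_-$ equals $\sum_i a_i T_i \neq 0$, the implicit function theorem yields $\Lambda_0 \in \Ccal^1(\Scal^N_+,\RR^*_+)$ (positivity from $\phi_-(0) < 0$, and $\Lambda_0$ bounded away from $0$ by compactness of $\Scal^N_+$), so that $\Kcal$ and $\Ccal$ are exactly as in (\ref{defK}) and (\ref{defCcal}).

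Now I would define $\Tcal$. For $\vs = \vs_L + \lambda\va \in \Kcal$ the function $\Hcal(\cdot,\vs)$ is strictly increasing on $[\tau_-(\vs),\tau_+(\vs)]$ (its derivative $-\Fcal$ is positive strictly between the roots) with endpoint values $\phi_-(\lambda) < 0 < \phi_+(\lambda)$, so it has a unique zero there, which I take as $\Tcal(\vs)$. Since $\Fcal(\cdot,\vs)$ has exactly two roots, $\Hcal(\cdot,\vs)$ has exactly three zeros, and only this middle one lies in $\{\Fcal < 0\}$; hence $\Tcal|_{\Kcal}$ is the only map compatible with (i) and (iv), which gives the asserted uniqueness (and with it uniqueness of $\Lambda_0$, $\Kcal$, $\Ccal$). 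On $\Kcal$ we have $\del_\tau\Hcal(\Tcal(\vs),\vs) = -\Fcal(\Tcal(\vs),\vs) > 0$, so the implicit function theorem gives $\Tcal \in \Ccal^1(\Kcal,\RR^*_+)$ and property (iv); property (i) holds by construction. I would extend $\Tcal$ continuously to $\bar\Kcal$ by $\Tcal(\vs_L) := \tau_L$ and $\Tcal(\vs) := \tau_-(\vs)$ for $\vs \in \Ccal$; then (ii) and (iii) follow because $\tau_\pm$ are roots of $\Fcal(\cdot,\vs)$ and, on $\Ccal$, $\Hcal(\tau_-(\vs),\vs) = \phi_-(\Lambda_0(\va)) = 0$ by the definition of $\Lambda_0$. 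Continuity at $\Ccal$ holds since, as $\lambda \uparrow \Lambda_0(\va)$, $\phi_-(\lambda) \uparrow 0$ while $\tau_-(\vs)$ is a genuine local minimum of $\Hcal(\cdot,\vs)$ (there $\del^2_{\tau\tau}\Hcal = -\del_\tau\Fcal > 0$), which forces the middle zero to merge into $\tau_-(\vs)$; continuity at $\vs_L$ holds because $\Tcal(\vs(\lambda)) \in (\tau_-(\vs(\lambda)),\tau_+(\vs(\lambda)))$, so every limit point as $\lambda \to 0$ lies in $[\tau_-(\vs_L),\tau_L]$ and solves $\Hcal(\cdot,\vs_L) = 0$, whose only root in that interval is $\tau_L$ (as $\Hcal(\cdot,\vs_L)$ increases there from $\phi_-(0) < 0$ to $0$). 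Note that $\Ccal$ is bounded away from $\vs_L$, so these two boundary pieces do not interfere.

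The two places where genuine work is required are: first, the finiteness of the merging threshold $\Lambda_1(\va)$ — equivalently, that $\theta(\vs(\lambda))$ eventually becomes positive — which must be extracted from the one-sided asymptotic hypotheses (\ref{hypH3}) and (\ref{hypH5}) by simultaneously controlling the motion of $\tau^*(\vs(\lambda))$ and of $\vs(\lambda)$ as $\lambda \to +\infty$; and second, the continuity of $\Tcal$ up to the apex $\vs_L$ of the cone, uniformly in $\va \in \Scal^N_+$, where the ray parametrisation of $\bar\Kcal$ degenerates, which needs the limiting arguments above to be uniform over the compact set $\Scal^N_+$.
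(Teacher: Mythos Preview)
Your proposal is correct and follows essentially the same route as the paper: analyze the strictly convex map $\tau\mapsto\Fcal(\tau,\vs)$ to obtain two roots $\tau^\pm$, exploit $\del_\tau\Hcal=-\Fcal$ so that along each ray $\lambda\mapsto\Hcal(\tau^\pm(\vs_L+\lambda\va),\vs_L+\lambda\va)$ has derivative $\sum_i a_iT_i>0$, extract $\Lambda_0$ via the implicit function theorem from the unique zero of the $\tau^-$ branch, and take $\Tcal$ as the middle root of $\Hcal(\cdot,\vs)=0$ on $(\tau^-,\tau^+)$. For your first flagged point the paper argues slightly more directly than pushing $\tau^*\to\infty$: it locates a finite $\lambda_\star(\va)$ at which the minimiser $\bar\tau(\vs_L+\lambda_\star\va)$ equals $\tau_L$ (by making $\del_\tau p(\tau_L,\vs_L+\lambda\va)+m^2$ change sign via (\ref{hypH5})), and there $\Fcal(\bar\tau,\vs)=p(\tau_L,\vs)-p(\tau_L,\vs_L)>0$ by (\ref{hypH2}).
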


The mapping $\Lambda_0:\Scal^N_+\to\RR^*_+$ will be built in the
course of the proof. Rephrasing the above result, the function
$\Tcal(\vs)$ with $\vs\in\Ccal$, simultaneously makes vanish $\Fcal$
and $\Hcal$, so that all the values $\vs$ in the smooth manifold
$\Ccal$ are candidate for being reached in the future from the state
$\omega_L$ via a traveling wave with speed $\sigma$ for suitable
choice of the $N$ viscosity laws. 

We now show that all the values $\vs$ in the manifold
$\Ccal$, defined by (\ref{defCcal}), are actually eligible candidates
for entering the definition of the specific entropy in exit states
$\omega_R$.

\begin{lemma}
A state $\omega_L$ being given in $\Omega$ and a velocity $\sigma$
being prescribed according to (\ref{lax}). For any given
$\vs\in\Ccal$, there exists at least one relevant definition of the
$N$-uple of viscosity laws which yields a traveling wave solution with
speed $\sigma$ issued from $\omega_L$ and connecting a state
$\omega_R$ in the future with $\vs_R=\vs$.
\end{lemma}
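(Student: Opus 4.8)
The plan is to show, for a fixed $\vs \in \Ccal$, that the single equilibrium constraint $\Fcal(\Tcal(\vs),\vs)=0$ (guaranteed by Theorem~\ref{main_cp}(ii), together with $\Hcal(\Tcal(\vs),\vs)=0$ from (i)) can always be \emph{realized} as the exit state of the heteroclinic orbit $\gamma_<(\omega_L)$ for a judicious choice of the viscosity ratios $\mu_1,\dots,\mu_N$. The key point is that, by Proposition~\ref{propfinex}, whatever the choice of viscosities, $\gamma_<(\omega_L)$ converges in forward time to \emph{some} $\omega_R = (\tau_R,\vs_R) \in \Hcal^{-1}(0)\cap\Fcal^{-1}(0)$ with $\vs_R \ge \vs_L$; so the map $(\mu_1:\dots:\mu_N) \mapsto \vs_R$ is well-defined on the space of admissible viscosity profiles, and what must be proven is that its image covers all of $\Ccal$. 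First I would reduce to the ``constant ratio'' case: by homogeneity of the right-hand side of \eqref{syssigma} in $\mu$ (only the ratios $\mu_i/\mu$ enter the $\dot s_i$ equations, up to the overall time reparametrization by $1/\mu$), it suffices to consider viscosity $N$-uples of the form $\mu_i(\omega) = a_i\, m(\omega)$ for a fixed positive weight vector $a=(a_1,\dots,a_N)$ with $\sum a_i = 1$, where $m$ is any fixed admissible scalar viscosity; then $\dot s_i / \dot s_j = a_i T_j(\tau,s_j)/(a_j T_i(\tau,s_i))$ along the orbit.

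Next I would extract the entropy increments. Along $\gamma_<(\omega_L)$ one has $\dot s_i = \dfrac{a_i}{m(\omega)\,T_i(\tau,s_i)}\,\Fcal^2(\omega) \ge 0$, so each $s_i$ is monotone increasing from $(s_i)_L$ to $(s_i)_R$, and the total increment vector $\vs_R - \vs_L$ points in a direction determined by the relative sizes of the $a_i/T_i$ along the orbit. In the limit where one weight dominates, say $a_k \to 1$ and the others $\to 0$, only $s_k$ increases appreciably, so $\vs_R - \vs_L$ approaches a positive multiple of the $k$-th coordinate direction $e_k$ — more precisely its projection onto $\Scal^N_+$ approaches $e_k$, while by Theorem~\ref{main_cp} the scalar length is pinned to $\Lambda_0(e_k)$. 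By continuous dependence of the flow on the parameter $a$ (the vector field depends continuously, indeed smoothly, on $a$ on any compact subset of $\Ical$ avoiding $\{\tau=0\}$, and the orbit stays in the compact $K$ of Proposition~\ref{propfinex} uniformly in $a$), the map $a \mapsto \va_R := (\vs_R-\vs_L)/\|\vs_R-\vs_L\| \in \Scal^N_+$ is continuous and sends each vertex $e_k$ of the simplex to $e_k$. A degree/Brouwer-type argument — or simply the fact that a continuous self-map of the closed simplex (after identifying $\Scal^N_+$ with the standard simplex by radial projection) fixing all vertices is surjective onto it — then shows every direction $\va \in \Scal^N_+$ is attained by some $a$; combined with Theorem~\ref{main_cp}(ii), which forces the realized exit entropy in direction $\va$ to be exactly $\vs_L + \Lambda_0(\va)\va$, this exhibits every $\vs \in \Ccal$ as an exit state.

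The main obstacle I expect is the surjectivity/continuity step: one must verify that the exit-state map $a \mapsto \vs_R(a)$ is genuinely continuous up to the boundary of the weight simplex, which is delicate because as $a_k \to 0$ the $s_k$-equation degenerates and the orbit could, a priori, spend unbounded ``time'' near $\omega_L$ or near $\{\Fcal = 0\}$. This is handled by invoking the uniform confinement to the compact set $K \subset \Ical$ (Lemma~\ref{lemI} and the $\vs$-boundedness argument in Proposition~\ref{propfinex}, both uniform in $a$ since $\Hcal$ and the sign of $\Fcal$ are controlled independently of the viscosities) together with the fact that on $K$ one has $\Fcal(\omega) < 0$ bounded away from $0$ except in a neighborhood of the endpoints, so the LaSalle argument localizing $\varpi(\gamma_<(\omega_L)) \subset \{\Fcal = 0\}$ is stable under perturbation of $a$. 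A secondary technical point is ruling out that the $\varpi$-limit set, though connected and contained in $\Hcal^{-1}(0)\cap\{\Fcal=0\}$, reduces to a set larger than a single point or jumps discontinuously in $a$; this follows from the center-manifold analysis at equilibria (the nontrivial eigenvalue $\del_\tau\Fcal(\omega_R)/\mu(\omega_R) > 0$ makes each candidate $\omega_R$ a stable node transversally to the center directions, so the limit set is a single point depending continuously on $a$ by the $C^1$ structure established in Theorem~\ref{main_cp}). Once continuity up to the boundary and the vertex-to-vertex behavior are in hand, the covering conclusion is immediate.
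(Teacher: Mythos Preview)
Your approach is genuinely different from the paper's, and more complicated than necessary. The paper avoids any topological argument by a clever \emph{explicit} choice of viscosities: it takes $\mu_i(\tau,s_i) = \mu_i^0\, T_i(\tau,s_i)$ with constants $\mu_i^0 \ge 0$, $\sum_i \mu_i^0 > 0$. With this choice the temperature in the denominator of $\dot s_i$ cancels, giving $\dot s_i = \frac{\mu_i^0}{\mu^2}\Fcal^2$, hence $\dot s_i/\dot s_j = \mu_i^0/\mu_j^0$ is \emph{constant} along the orbit. Integrating, $s_i^R - s_i^L = (\mu_i^0/\mu_N^0)(s_N^R - s_N^L)$, so the direction $(\vs_R-\vs_L)/\|\vs_R-\vs_L\|$ is exactly $(\mu_1^0,\dots,\mu_N^0)/\|(\mu_1^0,\dots,\mu_N^0)\|$. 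Any $\va\in\Scal^N_+$ is then realized by setting $\mu_i^0 = a_i$, and Theorem~\ref{main_cp} pins the length to $\Lambda_0(\va)$. No continuity or degree argument is needed.

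Your route, by contrast, chooses $\mu_i = a_i\, m(\omega)$, for which $\dot s_i/\dot s_j = (a_i T_j)/(a_j T_i)$ is \emph{not} constant, so you cannot read off the exit direction and must instead invoke a topological covering argument. That argument, as written, has a genuine gap: the assertion that ``a continuous self-map of the closed simplex fixing all vertices is surjective'' is \emph{false} in dimension $\ge 2$ (one can build a continuous $f:\Delta^2\to\partial\Delta^2$ fixing the three vertices by arranging $f|_{\partial\Delta^2}$ to have degree~$0$). Your argument is salvageable because your map has more structure than you used: when $a_k = 0$ one has $\dot s_k \equiv 0$, hence $(\va_R)_k = 0$, so the map $a\mapsto\va_R(a)$ is \emph{face-preserving} (each coordinate face of the simplex is sent into itself). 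Face-preserving continuous self-maps of the simplex \emph{are} surjective, since the linear homotopy to the identity stays face-preserving and hence $f|_{\partial\Delta}$ has degree~$1$. With that correction, and after justifying continuous dependence of the exit state on $a$ (which, as you note, requires care near the boundary of the weight simplex), your strategy would go through---but the paper's choice of viscosities sidesteps all of this.
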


\begin{proof}
The proof of this result makes use of particular viscosity laws under
the form 
\be
\label{eq7visco}
\mu_i(\tau,s_i)=\mu_i^0 T_i(\tau,\vs_i),
\quad
\mu_i^0\ge 0,
\quad 1\le i\le N.
\ee
The non degeneracy condition (\ref{condvisco}) is satisfied as soon as 
\be
\label{condviscostar}
\sum_{i=1}^N \mu_i^0 >0,
\ee
since each of the temperature law $T_i(\tau,\vs_i)$ is assumed to be
positive. Without lost of generality, we assume $\mu_N^0>0$.

We stress that viscosity laws which linearly depend on the
temperature naturally arise in the kinetic theory for dilute gases. We
refer the reader to the book by Hirschefelder, Curtiss
and Bird \cite{hirschefelder}.

Observe that viscosity laws in the special form (\ref{eq7visco}) let
evolve each specific entropy according to
$$
\dot{s}_i=\frac{\mu_i^0}{\mu^2(\tau,\vs)}\Fcal^2(\tau,\vs),
  \qquad 1\le i \le N.
$$
We thus infer the following  $(N-1)$ balance equations linking
the evolution of the first $(N-1)$ specific entropies $s_i$ to the
last one:
$$
\dot{s}_i=\frac{\mu_i^0}{\mu_N^0}\dot{s}_N,
  \qquad 1\le i \le N-1.
$$
Since the ratios $\mu_i^0/\mu_N^0$ are constant real numbers,
we deduce:
$$
\left(s_i-\frac{\mu_i^0}{\mu_N^0}s_N\right)(\xi)
=s_i^L-\frac{\mu_i^0}{\mu_N^0}s_N^L,
\qquad\mbox{for all}\quad
\xi\in\RR.
$$
We therefore end up with $(N-1)$ jump relations 
\be
\label{eq8si}
s_i^R-s_i^L = \frac{\mu_i^0}{\mu_N^0} \left( s_N^R-s_N^L \right),
\quad 1\le i \le N.
\ee

We emphasize at this stage that $s_N^R-s_N^L>0$ in view of our
assumption $\mu_N^0>0$. From the jump relation (\ref{eq8si}), we
therefore get 
\be
\label{eq9vs}
\vs_R-\vs_L = \frac{s_N^R -s_N^L}{\mu_N^0}
\left(\begin{array}{c}
\mu_1^0 \\ \vdots \\ \mu_N^0
\end{array}\right),
\ee
which is obviously in the form $\Lambda_0(\va)\va$, for $\va$ in
$\Scal^N_+$, given by
$$
\va = \frac{1}
{\sqrt{\sum_{j=1}^N (\mu_j^0)^2}} \left( \mu_j^0 \right)_{1\le j \le N}.
$$

Next and up to some relabelling in the viscosity in order to
allow $\mu_N^0$ to vanish, any given $\va\in\Scal^N_+$ gives rise to
an admissible $N$-uple of viscosity coefficients.
This concludes the proof.
\end{proof}

\begin{remark}
The identity (\ref{eq9vs}) shows in addition that the mapping
$\va\mapsto\Lambda_0(\va)$ can be built as soon as the jump in the
last specific entropy $s_N^R-s_L^N$ is known.
This evaluation can be performed numerically; see, for instance,
\cite{BerthonCoquel1}.
\end{remark}

We now give a proof of the main result of this section, namely Theorem~\ref{main_cp}. In that aim, and as already claimed, we propose to
first study for existence in $\Scal^N_+$ the roots $\tau(\vs)$ of
$\Fcal(\tau,\vs)=0$. Then, we shall study their distinctive properties
by investigating the values of $\Hcal(\tau(\vs),\vs)$. 

\begin{proposition}
\label{prop1_cp}
There exists two maps, we denote by $\tau^\pm$ belonging to
$\Ccal^1\big({\Dcal}\cup\{\vs_L\},\RR^*_+)\cap\Ccal^0\big({\bar
  D},\RR^*_+)$, where $\Dcal$ is the subset of $S^+(\vs_L)$ defined by 
\be
\label{def_D}
\Dcal = \big\{ \vs\in S^+(\vs_L) ~/~ \vs = \vs_L + \lambda \va,~ \va\in \Scal^N_+,~ \lambda \in ]0, {\bar \Lambda}(\va)[\big\},
\ee
for some ${\bar \Lambda}\in\Ccal^1\big(\Scal^N_+,\RR^*_+\big)$ with 
${\bar \Lambda}(\va) > \Lambda_0(\va)$ for all $\va\in \Scal^N_+$, so that 
$$
\Fcal(\tau^\pm(\vs),\vs) = 0, \quad \quad \vs\in{\bar \Dcal}.
$$
In addition, these two families of roots are interlaced according to 
\begin{enumerate}
\item[(i)]   $\tau^-(\vs) < \tau^+(\vs) < \tau_L$, for all $\vs \in
  \Dcal\backslash \{\vs_L\}$,
\item[(ii)]  $\tau^+(\vs) = \tau_L$ in ${\bar \Dcal}$ iff $\vs =
  \vs_L$ with $\tau^-(\vs_L)<\tau^+(\vs_L)=\tau_L$,
\item[(iii)] $\tau^-(\vs)=\tau^+(\vs)$ in ${\bar \Dcal}$ iff $\vs =
  \vs_L + {\bar \Lambda}(\va) \va$, $\va\in \Scal^N_+$.
\end{enumerate}

\end{proposition}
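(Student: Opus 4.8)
The plan is to study, for each fixed direction $\va \in \Scal^N_+$, the one-parameter family of functions $\lambda \mapsto \Fcal(\tau, \vs_L + \lambda \va)$ and to locate its roots in $\tau$. First I would record the structural facts about $\Fcal(\tau,\vs) = p(\tau,\vs) - p(\tau_L,\vs_L) + m^2(\tau - \tau_L)$: by the convexity assumption \eqref{hypH1} the map $\tau \mapsto \Fcal(\tau,\vs)$ is strictly convex for every fixed $\vs$, by the asymptotic conditions \eqref{hypH3}--\eqref{hypH4} it tends to $+\infty$ as $\tau \to 0^+$ and its derivative $m^2 + \del_\tau p$ tends to $0$ as $\tau \to +\infty$, so $\Fcal(\tau,\vs) \to -\infty$ or stays bounded depending on the balance — in any case, combined with the Lax condition \eqref{lax} which gives $f'(\tau_L) = m^2 - (\rho_L c_L)^2 > 0$ at $\vs = \vs_L$, one sees that for $\vs$ close to $\vs_L$ the convex function $\Fcal(\cdot,\vs)$ has its minimum at some $\tau_m(\vs) < \tau_L$ and takes a negative value there, hence has exactly two roots $\tau^-(\vs) < \tau^+(\vs)$, both strictly below $\tau_L$ when $\vs \neq \vs_L$. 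At $\vs = \vs_L$ itself, $\Fcal(\tau_L,\vs_L) = 0$ and $f'(\tau_L) > 0$ force $\tau^+(\vs_L) = \tau_L$ and $\tau^-(\vs_L) < \tau_L$. This already gives assertions (i) and (ii) locally, and the monotonicity in $\vs$ — namely $\Fcal(\tau,\vs) > f(\tau) = \Fcal(\tau,\vs_L)$ whenever $\vs > \vs_L$, which is exactly inequality \eqref{sf} from Lemma~\ref{lemfp} — controls how the two roots move as $\lambda$ increases.

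Next I would carry out the continuation in $\lambda$. Since $\del_\tau \Fcal(\tau^\pm(\vs),\vs) = m^2 + \del_\tau p(\tau^\pm(\vs),\vs)$ is nonzero at a simple root — positive at $\tau^+$ and negative at $\tau^-$ by convexity, as long as $\tau^- < \tau^+$ — the implicit function theorem produces $C^1$ branches $\va \mapsto \tau^\pm(\vs_L + \lambda \va)$ that extend as long as the two roots stay distinct. The function $\vs \mapsto \min_\tau \Fcal(\tau,\vs)$ is continuous and, by \eqref{sf}, strictly increasing along each ray $\lambda \mapsto \vs_L + \lambda\va$; moreover as $\lambda \to +\infty$ the conditions \eqref{hypH3} force $p(\tau,\vs) \to +\infty$ pointwise in $\tau$ which, together with the convexity, eventually pushes the minimum value of $\Fcal(\cdot,\vs)$ up to (and past) zero. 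Hence there is a first value $\lambda = \bar\Lambda(\va)$ at which the minimum of $\Fcal(\cdot,\vs_L + \lambda\va)$ equals zero; at that value $\tau^-$ and $\tau^+$ coalesce at the double root, giving assertion (iii). I would then define $\Dcal$ by \eqref{def_D} with this $\bar\Lambda$, and check $\bar\Lambda \in \Ccal^1(\Scal^N_+,\RR^*_+)$: smoothness follows from applying the implicit function theorem to the coupled system $\Fcal(\tau,\vs) = 0$, $\del_\tau \Fcal(\tau,\vs) = 0$ at the coalescence point, where the relevant $2 \times 2$ Jacobian is nonsingular precisely because $\del^2_{\tau\tau}\Fcal = \del^2_{\tau\tau}p > 0$ there (assumption \eqref{hypH1}, in the form used in Lemma~\ref{lemfp}) and $\del_{s_i}\del_\tau\Fcal$ does not simultaneously vanish (by \eqref{hypH2}, \eqref{hypH5}). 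Positivity $\bar\Lambda(\va) > 0$ for every $\va$ is immediate since for small $\lambda$ the minimum value is still negative. The strict inequality $\bar\Lambda(\va) > \Lambda_0(\va)$ I would obtain by comparing: $\Lambda_0$ is the value at which $\Fcal$ and $\Hcal$ vanish simultaneously (a critical point on the energy surface), whereas $\bar\Lambda$ is the larger value at which the two $\Fcal$-roots merge, and the relative compactness / trapping argument of Proposition~\ref{propfinex} shows the reachable exit state corresponds to the \emph{inner} root, so $\Lambda_0 < \bar\Lambda$; alternatively one checks directly that at $\lambda = \Lambda_0(\va)$ the function $\Fcal(\cdot,\vs)$ still has two distinct roots because $\Hcal(\tau,\vs) = 0$ forces $\tau$ to lie in the interior of the interval where $\Fcal < 0$.

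The main obstacle I anticipate is controlling the boundary behavior of the branches $\tau^\pm$ as $\vs$ approaches $\partial\Dcal$ and as $\va$ ranges over the closed sphere, i.e.\ establishing the claimed regularity $\tau^\pm \in \Ccal^0(\bar\Dcal,\RR^*_+)$ up to the coalescence set and uniformly in $\va$, and in particular ruling out that $\tau^\pm \to 0$ or that the two branches fail to meet transversally somewhere. Ruling out $\tau^\pm \to 0$ is exactly the content of the asymptotic condition \eqref{hypH3} on $\sum p_i$ near $\tau = 0$ (the same mechanism as in Lemma~\ref{lemvide}), and transversality of the coalescence is the $\del^2_{\tau\tau}p > 0$ point above; so these are handled, but assembling them into uniform $\Ccal^0$/$\Ccal^1$ statements over the compact sphere $\Scal^N_+$ — including the delicate matching at $\vs = \vs_L$ where $\tau^+$ hits $\tau_L$ and the interlacing degenerates — is where the technical care is concentrated. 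I would organize that final bookkeeping by working on each ray, proving the quantitative estimates with constants depending only on $\vs_L$, $m$, and the thermodynamic data, and then invoking compactness of $\Scal^N_+$ to conclude uniformity.
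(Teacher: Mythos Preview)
Your overall strategy coincides with the paper's: exploit the strict convexity of $\tau\mapsto\Fcal(\tau,\vs)$, locate its unique minimum $\bar\tau(\vs)$, and track the minimum value along each ray $\lambda\mapsto\vs_L+\lambda\va$ until it crosses zero, defining $\bar\Lambda(\va)$ at the coalescence of the two roots. The paper organizes exactly this into Lemmas~\ref{lem1_cp} and~\ref{lem2_cp}, and likewise defers the inequality $\bar\Lambda(\va)>\Lambda_0(\va)$ to the subsequent analysis of $\Hcal$.

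Two points in your execution are genuinely wrong, however. First, the limit of $\Fcal(\tau,\vs)$ as $\tau\to+\infty$: since $\del_\tau p\to 0$ by \eqref{hypH4}, the derivative $\del_\tau\Fcal=m^2+\del_\tau p$ tends to $m^2>0$, not to $0$, hence $\Fcal(\tau,\vs)\to+\infty$. This is not cosmetic: it is precisely the $+\infty$ limits on \emph{both} ends that force a strictly convex function with negative minimum to have exactly two roots; your stated alternative ``$\to-\infty$ or stays bounded'' would give only one.

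Second, your argument that the minimum value of $\Fcal(\cdot,\vs_L+\lambda\va)$ becomes positive for large $\lambda$ invokes \eqref{hypH3}, but \eqref{hypH3} concerns only the $\tau$-asymptotics of $p$ and says nothing about the behaviour of $p(\tau,\vs)$ as $\vs\to+\infty$ at fixed $\tau$. The paper argues differently: setting $\phi(\lambda,\va)=\Fcal(\bar\tau(\vs_L+\lambda\va),\vs_L+\lambda\va)$, one computes directly $\partial_\lambda\phi=\sum_i a_i\,\partial_{s_i}p_i>0$ by \eqref{hypH2}, and then shows $\phi$ becomes positive at a finite $\lambda$ by finding $\lambda_\star(\va)$ at which $\bar\tau(\vs_L+\lambda_\star\va)=\tau_L$ (this uses \eqref{hypH5}, via $\del_\tau p(\tau_L,\vs_L+\lambda\va)\to-\infty$), where $\phi(\lambda_\star,\va)=p(\tau_L,\vs)-p(\tau_L,\vs_L)>0$. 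Your appeal to \eqref{sf} for strict monotonicity of the minimum value is also insufficient: \eqref{sf} compares $\Fcal(\cdot,\vs)$ only to $\Fcal(\cdot,\vs_L)$, not to $\Fcal(\cdot,\vs')$ for intermediate $\vs'$ along the ray; the explicit derivative computation above is what is needed.
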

Again, the map ${\bar \Lambda} : \Scal^N_+\to \RR^*_+$ will be built
in the course of the proof. But from now on, notice that ${\bar
  \Kcal}\subset\Dcal$. We shall show that for fixed $\vs\in{\bar \Dcal}$,
$\Fcal(\tau,\vs)=0$ only admits $\tau^\pm(\vs)$ as roots and cannot be
solved in $\tau$ for values of $\vs$ in $S^+(\vs_L)\backslash
{\bar \Dcal}$. As a consequence, all the critical points
$(\tau(\vs),\vs)$ of (\ref{syssigma}) are necessarily achieved for
$\vs\in{\bar \Dcal}$ so that $\tau(\vs)$ must coincide with either
$\tau^-(\vs)$ or $\tau^+(\vs)$ for suitable values of $\vs\in{\bar \Dcal}$
: {\it i.e.} such that $\Hcal(\tau^-(\vs),\vs)=0$ or
$\Hcal(\tau^+(\vs),\vs)=0$. In this way, let us state some properties
of $\Hcal$ with respect to the above two families of roots.

\begin{proposition}
\label{prop2_cp}
Using the notation in Propositions \ref{main_cp} and \ref{prop1_cp},
we have 
\begin{enumerate}
\item[(i)]  $\Hcal(\tau^+(\vs_L),\vs_L) = 0$,
\item[(ii)] $\Hcal(\tau^+(\vs),\vs) > 0$, for all $\vs\in {\bar
  \Dcal}\backslash\{\vs_L\}$,
\end{enumerate}
while
\begin{enumerate}
\item[(iii)] $\Hcal(\tau^-(\vs),\vs) < 0$, for all $\vs\in \Kcal\cup\{\vs_L\}$,
\item[(iv)]  $\Hcal(\tau^-(\vs),\vs) = 0$, for all $\vs \in
  \Ccal=\big\{\vs\in\Dcal~/~\vs= \vs_L + \Lambda_0(\va)\va,~\va\in
  \Scal^N_+\big\}$,
\item[(v)]   $ \Hcal(\tau^-(\vs),\vs) > 0$, for all $\vs\in {\bar \Dcal}\backslash {\bar \Kcal}$.
\end{enumerate}
\end{proposition}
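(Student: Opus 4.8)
\textbf{Proof proposal for Proposition~\ref{prop2_cp}.}
The plan is to exploit two facts established above: first, that along a ray $\vs = \vs_L + \lambda\va$ with $\va\in\Scal^N_+$ fixed, the function $\lambda\mapsto\Fcal(\tau,\vs_L+\lambda\va)$ is nondecreasing (this follows from the positivity of the Gr\"uneisen numbers, exactly as in the proof of Lemma~\ref{lemfp}, inequality~\eqref{sf}); second, that along the same ray $\lambda\mapsto\Hcal(\tau,\vs_L+\lambda\va)$ is nondecreasing because $\del_{s_i}\Hcal = T_i > 0$. The key object tying $\Fcal$ and $\Hcal$ together is the elementary observation that $\del_\tau\Hcal(\tau,\vs) = m^2\tau + \del_\tau e(\tau,\vs) = m^2\tau - p(\tau,\vs)$, so that $\del_\tau\big(\Hcal(\tau,\vs) - \Hcal(\tau_L,\vs_L)\big)$ and $\Fcal(\tau,\vs)/\tau$-type quantities are directly comparable; more precisely one checks by direct differentiation in $\tau$ that $\Fcal(\tau,\vs) = \big(m^2\tau + \del_\tau e(\tau,\vs)\big) - \big(m^2\tau_L + \del_\tau e(\tau_L,\vs_L)\big) \cdot(\text{sign bookkeeping})$, i.e.\ $\Fcal$ is essentially $\del_\tau$ of the map $\tau\mapsto\Hcal(\tau,\vs)$ shifted to vanish at $\tau_L$. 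This is the lever that converts statements about the \emph{sign of $\Fcal$ at a root $\tau^\pm(\vs)$} into statements about whether $\Hcal(\tau^\pm(\vs),\vs)$ is above or below zero.

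For items (i) and (iii) about base points: at $\vs = \vs_L$ we have $\Fcal(\tau_L,\vs_L) = 0$ and $\Hcal(\tau_L,\vs_L) = 0$ by construction and by Proposition~\ref{propHconstant}; since $\tau^+(\vs_L) = \tau_L$ by Proposition~\ref{prop1_cp}(ii), item (i) is immediate. For (iii) at $\vs_L$, the point $\tau^-(\vs_L) < \tau_L$ is the \emph{other} root of $\Fcal(\cdot,\vs_L)=0$; between $\tau^-(\vs_L)$ and $\tau_L$ the function $f(\tau)=\Fcal(\tau,\vs_L)$ is $\le 0$ (it is convex in $\tau$ by \eqref{hypH1} with the two roots $\tau^\pm(\vs_L)$, and the Lax condition forces $f'(\tau_L)>0$, so $f<0$ strictly between the roots), hence integrating $\del_\tau\Hcal(\tau,\vs_L)$ — which, by the identification above, has the same sign as (a positive multiple of) $f(\tau) = \Fcal(\tau,\vs_L)$ relative to its value at $\tau_L$ — from $\tau^-(\vs_L)$ up to $\tau_L$ shows $\Hcal(\tau^-(\vs_L),\vs_L) < \Hcal(\tau_L,\vs_L) = 0$.

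For the ray-monotonicity items (ii), (iv), (v): fix $\va\in\Scal^N_+$ and consider $\lambda\mapsto\tau^\pm(\vs_L+\lambda\va)$. Writing $g^\pm(\lambda) := \Hcal\big(\tau^\pm(\vs_L+\lambda\va),\vs_L+\lambda\va\big)$ and differentiating, the $\del_\tau\Hcal$ term is killed up to a controllable sign because at $\tau^-$ one is on the branch where $\Fcal$ changes from negative to nonnegative while at $\tau^+$ it changes from nonnegative to negative — combined with $\del_{s_i}\Hcal = T_i > 0$ and $\del_\lambda\vs = \va \ge 0$ this gives a definite sign for $\frac{d}{d\lambda}g^\pm$ on each branch. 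For the $+$ branch, $g^+(0)=0$ and $g^+$ is increasing, giving (ii). For the $-$ branch, $g^-(0) < 0$ (item (iii) at $\vs_L$) and $g^-$ is increasing in $\lambda$, so it crosses zero exactly once; the crossing value of $\lambda$ \emph{defines} $\Lambda_0(\va)$, its $\Ccal^1$ regularity in $\va$ coming from the implicit function theorem applied to $g^-(\lambda;\va)=0$ using that $\del_\lambda g^- > 0$ there. This simultaneously delivers (iv) (vanishing of $\Hcal(\tau^-,\cdot)$ exactly on $\Ccal$), (iii) on $\Kcal$ (where $\lambda < \Lambda_0(\va)$, so $g^- < 0$), and (v) on $\bar\Dcal\setminus\bar\Kcal$ (where $\lambda > \Lambda_0(\va)$, so $g^- > 0$), provided one also checks $\Lambda_0(\va) < \bar\Lambda(\va)$ so that $\tau^-$ is still defined past the crossing — but that inequality is already asserted in Proposition~\ref{prop1_cp}.

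\textbf{Main obstacle.} The delicate point is the sign of $\frac{d}{d\lambda}\Hcal(\tau^\pm(\vs_L+\lambda\va),\vs_L+\lambda\va)$: the chain rule produces $\del_\tau\Hcal \cdot \frac{d\tau^\pm}{d\lambda} + \nabla_{\!\vs}\Hcal\cdot\va$, and while the second term is manifestly positive, the first term requires knowing both the sign of $\del_\tau\Hcal$ at $\tau^\pm$ \emph{and} the sign of $\frac{d\tau^\pm}{d\lambda}$, neither of which is individually free. The clean way around this is to differentiate the \emph{defining relation} $\Fcal(\tau^\pm(\vs),\vs)=0$ to express $\frac{d\tau^\pm}{d\lambda}$ via $\del_\tau\Fcal(\tau^\pm,\vs)$ and $\nabla_{\!\vs}\Fcal\cdot\va$, note that $\del_\tau\Fcal(\tau^-,\vs) < 0 < \del_\tau\Fcal(\tau^+,\vs)$ (the two roots of a strictly convex-in-$\tau$ function, using \eqref{hypH1}), and then use the algebraic identification of $\del_\tau\Hcal$ with $\Fcal$ to see that the cross term actually vanishes at a root — i.e.\ $\del_\tau\Hcal(\tau^\pm(\vs),\vs)$ is, up to the positive factor in front, precisely $\Fcal(\tau^\pm(\vs),\vs) = 0$. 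Once that cancellation is seen the monotonicity is free and the rest is bookkeeping; getting the $\del_\tau e = -p$, $\del_\tau\Hcal = m^2\tau - p = \Fcal + (\text{const in }\tau)$ identities lined up correctly is where care is needed.
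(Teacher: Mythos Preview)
Your approach is essentially the same as the paper's: differentiate $g^\pm(\lambda):=\Hcal(\tau^\pm(\vs_L+\lambda\va),\vs_L+\lambda\va)$ along a ray, observe that the cross term $\partial_\tau\Hcal\cdot\frac{d\tau^\pm}{d\lambda}$ vanishes because $\partial_\tau\Hcal=-\Fcal$ and $\Fcal(\tau^\pm(\vs),\vs)=0$, so that $(g^\pm)'(\lambda)=\sum_i T_i\,a_i>0$. This is exactly what the paper does (the auxiliary function $\Theta$ introduced for the $\tau^+$ branch is not essential --- the paper's own computation collapses to the same chain-rule identity you isolate in your ``Main obstacle'' paragraph).

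Two points to clean up. First, your explicit formula $\partial_\tau\Hcal=m^2\tau-p$ is wrong: differentiating $\Hcal$ one finds $\partial_\tau\Hcal=-p(\tau,\vs)-m^2\tau+m^2\tau_L+p(\tau_L,\vs_L)=-\Fcal(\tau,\vs)$ exactly, with no leftover constant. You recover the correct conclusion by the end, but the computation in your first paragraph should be replaced by this clean identity; there is no ``sign bookkeeping'' needed.

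Second, and more substantively, your appeal to Proposition~\ref{prop1_cp} for the inequality $\Lambda_0(\va)<\bar\Lambda(\va)$ is circular: the paper explicitly defers that inequality to the proof of Proposition~\ref{prop2_cp}. What you need is that $g^-$ actually becomes positive on $(0,\bar\Lambda(\va)]$, so that a zero crossing exists. The paper obtains this from Lemma~\ref{lem3_cp}, which proves $\Hcal(\bar\tau(\vs),\vs)>0$ on $\Gamma$ by a separate convexity argument. You can avoid that lemma entirely, however: since $\tau^-=\tau^+=\bar\tau$ on $\Gamma$ (Proposition~\ref{prop1_cp}(iii)), you have $g^-(\bar\Lambda(\va))=g^+(\bar\Lambda(\va))$, and the latter is strictly positive by your item~(ii) together with continuity of $g^+$ at $\bar\Lambda(\va)$. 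With $g^-(0)<0$, $g^-$ strictly increasing on $[0,\bar\Lambda(\va))$, and $g^-(\bar\Lambda(\va))>0$, the intermediate value theorem produces $\Lambda_0(\va)\in(0,\bar\Lambda(\va))$ and the implicit function theorem gives its $\Ccal^1$ regularity. This closes the gap and in fact slightly streamlines the paper's argument.
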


Put in other words, the critical points of the differential system
(\ref{syssigma}) necessarily coincide with the set
$\big\{\tau^+(\vs_L),\vs_L\big\}$ and
$\big\{(\big(\tau^-(\vs),\vs\big)~/~\vs\in \Ccal\big\}$. Keeping this
in mind, we next analyze the roots $\tau(\vs)$ of
$\Hcal(\tau(\vs),\vs)$. The following claim states that $\Hcal$ admits
three distinct branches of roots in ${\bar \Kcal}$. A particular
attention is paid to single out a branch $\Tcal$ obeying the
requirements:
\be
\label{TsL}
\Tcal(\vs_L) = \tau^+(\vs_L)
\qquad\mbox{together with}\qquad
\Tcal(\vs)=\tau^-(\vs)
\qquad\mbox{for all}\quad
\vs\in\Ccal,
\ee
as put forward in Proposition \ref{prop2_cp}.

\begin{proposition}
\label{prop3_cp}
There exist three maps in $\Ccal^0({\bar K},\RR^*_+)\cap\Ccal^1\big(\Kcal,\RR^*_+\big)$ respectively denoted by ${\check \Tcal},~\Tcal,
~{\hat \Tcal} : {\bar \Kcal}\to \RR^*_+$, so that:
\begin{enumerate}
\item[(i)] $\Hcal(\Tcal(\vs),\vs) = \Hcal({\check \Tcal}(\vs),\vs) = \Hcal({\hat \Tcal}(\vs),\vs) = 0$,
 for all $\vs\in {\bar \Kcal}$.
\end{enumerate}
These are interlaced with the roots $\tau^\pm(\vs)$ of $\Fcal$ as
follows:
\begin{enumerate}
\item[(ii)]  ${\check \Tcal}(\vs) < \tau^-(\vs) < \Tcal(\vs) <
  \tau^+(\vs) < {\hat \Tcal}(\vs)$, for all $\vs\in\Kcal$,
\item[(iii)] ${\check \Tcal}(\vs_L) < \Tcal(\vs_L) = \tau^+(\vs_L) = {\hat \Tcal}(\vs_L)$, 
\item[(iv)]  ${\check \Tcal}(\vs) = \tau^-(\vs) = \Tcal(\vs) < {\hat \Tcal}(\vs)$, for all $\vs\in\Ccal$.
\end{enumerate}
\end{proposition}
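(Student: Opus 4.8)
The plan is to reduce everything to a one-dimensional study of $\tau \mapsto \Hcal(\tau,\vs)$ at fixed $\vs$, using the key identity
$$
\del_\tau \Hcal(\tau,\vs) = - \bigl( p(\tau,\vs) - p(\tau_L,\vs_L) + m^2(\tau - \tau_L) \bigr) = - \Fcal(\tau,\vs),
$$
which follows by differentiating the formula defining $\Hcal$ in Proposition~\ref{propHconstant} and invoking $\del_\tau e(\tau,\vs) = - p(\tau,\vs)$ (see \eqref{Tpositif}). Granting this, the critical points of $\tau \mapsto \Hcal(\tau,\vs)$ are exactly the zeros of $\Fcal(\cdot,\vs)$, which, by Proposition~\ref{prop1_cp} together with the strict convexity of $\tau \mapsto \Fcal(\tau,\vs)$ guaranteed by \eqref{hypH1}, are for $\vs \in \overline{\Kcal}$ precisely $\tau^-(\vs) < \tau^+(\vs)$, with $\Fcal(\cdot,\vs) > 0$ outside $[\tau^-(\vs),\tau^+(\vs)]$ and $\Fcal(\cdot,\vs) < 0$ inside. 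I would then read off that $\tau \mapsto \Hcal(\tau,\vs)$ is strictly decreasing on $(0,\tau^-(\vs))$, strictly increasing on $(\tau^-(\vs),\tau^+(\vs))$, and strictly decreasing on $(\tau^+(\vs),+\infty)$, so that it has exactly one local minimum, at $\tau^-(\vs)$, and exactly one local maximum, at $\tau^+(\vs)$.

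Next I would record the boundary behaviour in $\tau$: using the asymptotic conditions \eqref{thermo1} on the internal energies together with the explicit quadratic term $-\tfrac{m^2}{2}\tau^2$ in $\Hcal$, one gets $\Hcal(\tau,\vs) \to +\infty$ as $\tau \to 0^+$ and $\Hcal(\tau,\vs) \to -\infty$ as $\tau \to +\infty$, for every fixed $\vs$. Combining this three-piece monotone profile with the signs of the two critical values $\Hcal(\tau^\pm(\vs),\vs)$ supplied by Proposition~\ref{prop2_cp}, a direct intermediate value argument on each monotonicity interval produces the three root branches and pins down their interlacing. Concretely: on $\Kcal$ one has $\Hcal(\tau^-(\vs),\vs) < 0 < \Hcal(\tau^+(\vs),\vs)$, yielding three distinct zeros $\check\Tcal(\vs) < \tau^-(\vs) < \Tcal(\vs) < \tau^+(\vs) < \hat\Tcal(\vs)$, which gives (i) and (ii); at $\vs = \vs_L$ the local maximum value is $0$ while the local minimum is $<0$, collapsing $\Tcal = \hat\Tcal = \tau^+$ and leaving $\check\Tcal < \tau^- < \tau^+$, which is (iii); on $\Ccal$ the local minimum value is $0$ while the local maximum is $>0$, collapsing $\check\Tcal = \Tcal = \tau^-$ and leaving $\hat\Tcal > \tau^+$, which is (iv). Defining each map as the unique zero of $\Hcal(\cdot,\vs)$ lying in the corresponding monotonicity interval makes the three branches well defined on $\overline{\Kcal}$ and, by strict monotonicity, unique.

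It remains to address regularity, and this is where I expect the main obstacle. On the open set $\Kcal$ the three zeros are strictly separated from $\tau^\pm(\vs)$ by the interlacing above, hence $\del_\tau \Hcal = - \Fcal \neq 0$ at each of them; since $\Hcal$ is at least $C^1$ (the thermodynamic state laws being smooth), the implicit function theorem gives $\check\Tcal,\Tcal,\hat\Tcal \in \Ccal^1(\Kcal,\RR^*_+)$. Away from the merging loci the same argument also yields continuity up to $\overline{\Kcal}$. Near $\Ccal$, where $\check\Tcal$ and $\Tcal$ meet $\tau^-$, and near $\vs_L$, where $\Tcal$ and $\hat\Tcal$ meet $\tau^+$, the implicit function theorem degenerates since $\del_\tau \Hcal$ vanishes there; in those regions I would instead argue that each branch is the unique zero of $\Hcal(\cdot,\vs)$ in a sub-interval whose endpoints are the continuous functions $0 < \tau^-(\vs) < \tau^+(\vs) < +\infty$ (continuity of $\tau^\pm$ on $\overline{\Kcal}$ being part of Proposition~\ref{prop1_cp}), and that as $\vs$ approaches the degenerate locus the relevant critical value of $\Hcal$ tends to $0$, so the gap between the two merging branches closes; a monotone/squeeze argument then forces the desired \emph{continuity} through these points. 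Everything else reduces to the essentially one-dimensional convexity and monotonicity analysis made transparent by the identity $\del_\tau \Hcal = - \Fcal$ and by Propositions~\ref{prop1_cp} and \ref{prop2_cp}.
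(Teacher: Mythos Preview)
Your proposal is correct and follows essentially the same route as the paper: both proofs hinge on the identity $\del_\tau\Hcal=-\Fcal$, deduce from it and from Lemma~\ref{lem2_cp}/Proposition~\ref{prop1_cp} the three-piece monotone profile of $\tau\mapsto\Hcal(\tau,\vs)$ with the asymptotics $\Hcal\to+\infty$ as $\tau\to0^+$ and $\Hcal\to-\infty$ as $\tau\to+\infty$, combine this with the signs of $\Hcal(\tau^\pm(\vs),\vs)$ from Proposition~\ref{prop2_cp} to locate the three zeros, and then invoke the implicit function theorem for $\Ccal^1$ regularity on $\Kcal$. If anything, your treatment of continuity up to $\partial\Kcal$ (the squeeze argument where $\del_\tau\Hcal$ degenerates) is more explicit than the paper's, which dispatches $\Ccal^0(\bar\Kcal)$ in a single sentence.
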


Observe that the intermediate mapping $\Tcal$ fulfills the
requirements (\ref{TsL}) so that Theorem~\ref{main_cp} is
established.

We now give the proofs of Propositions \ref{prop1_cp} to
\ref{prop3_cp}. Proposition \ref{prop1_cp} relies on the following two
technical lemma.

\begin{lemma}
\label{lem1_cp}
For all fixed $\vs\in S^+(\vs_L)$, $\Fcal(.,\vs)$ admits a unique
minimum in $\tau$ we denote ${\bar \tau}(\vs)$ where ${\bar
  \tau}\in\Ccal^1\big( S^+(\vs_L),\RR^*_+\big)$ with ${\bar \tau}(\vs)
< \tau_L$ for all $\vs\in {\bar \Dcal}$. This minimum obeys:
\begin{enumerate}
\item[(i)]  $\Fcal({\bar \tau}(\vs),\vs) < 0$, for all $\vs\in \Dcal\cup\{\vs_L\}$,
\item[(ii)] $\Fcal({\bar \tau}(\vs),\vs) = 0$, for all $\vs\in \Gamma :=
  \{\vs\in S^+(\vs_L)~/~\vs=\vs_L+{\bar\Lambda}(\va)\va,~\va\in
  \Scal^N_+\}$, 
\item[(iii)] $\Fcal({\bar \tau}(\vs),\vs) > 0$, for all $\vs\in S^+(\vs_L)\backslash{\bar \Dcal}$,
\end{enumerate}
where the set $\Dcal$ has been defined in Proposition \ref{prop1_cp}.
\end{lemma}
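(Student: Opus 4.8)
The plan is to analyze the one-variable function $\tau \mapsto \Fcal(\tau,\vs) = p(\tau,\vs) - p(\tau_L,\vs_L) + m^2(\tau - \tau_L)$ for each fixed $\vs \in S^+(\vs_L)$, and to track how its minimizer and minimum value vary with $\vs$. First I would observe that, by the convexity hypothesis \eqref{hypH2}, namely $\sum_i \del^2_{\tau\tau} p_i > 0$, the function $\tau \mapsto \Fcal(\tau,\vs)$ is strictly convex on $(0,+\infty)$ for every $\vs$. Its derivative is $\del_\tau \Fcal(\tau,\vs) = m^2 + \del_\tau p(\tau,\vs)$. Using the asymptotic conditions \eqref{hypH4}, we have $\del_\tau p(\tau,\vs) \to -\infty$ as $\tau \to 0^+$ and $\del_\tau p(\tau,\vs) \to 0^-$ as $\tau \to +\infty$; hence $\del_\tau \Fcal \to -\infty$ at $0^+$ and $\del_\tau \Fcal \to m^2 > 0$ at $+\infty$. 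By strict convexity and the intermediate value theorem, there is a unique point ${\bar \tau}(\vs) \in (0,+\infty)$ with $\del_\tau \Fcal({\bar \tau}(\vs),\vs) = 0$, and this is the unique (global, strict) minimizer.

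Next I would establish the regularity and monotonicity properties. Since $\del^2_{\tau\tau}\Fcal({\bar\tau}(\vs),\vs) = \sum_i \del^2_{\tau\tau}p_i > 0$ is nonzero and all the thermodynamic data are $C^1$, the implicit function theorem applied to $\del_\tau\Fcal(\tau,\vs)=0$ gives ${\bar\tau} \in \Ccal^1(S^+(\vs_L),\RR^*_+)$. To see ${\bar\tau}(\vs) < \tau_L$ on ${\bar \Dcal}$, I would evaluate $\del_\tau\Fcal$ at $\tau_L$: at $\vs = \vs_L$ we have $\del_\tau\Fcal(\tau_L,\vs_L) = m^2 + \del_\tau p(\tau_L,\vs_L) = m^2 - (\rho_L c_L)^2 > 0$ by the Lax condition \eqref{lax}, so ${\bar\tau}(\vs_L) < \tau_L$ by convexity; for general $\vs \ge \vs_L$, assumption \eqref{hypH5} (each $\del_\tau p_i$ is decreasing in $s_i$, tending to $-\infty$) actually makes $\del_\tau p(\tau_L,\vs) \le \del_\tau p(\tau_L,\vs_L)$, hence $\del_\tau\Fcal(\tau_L,\vs) > 0$ still, giving ${\bar\tau}(\vs) < \tau_L$ — this works on the whole half-cone $S^+(\vs_L)$, in particular on ${\bar\Dcal}$.

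For the trichotomy (i)--(iii), the key observation is that the minimum value $g(\vs) := \Fcal({\bar\tau}(\vs),\vs)$ is a continuous (indeed $\Ccal^1$) function on the half-cone whose sign dictates the number of roots of $\Fcal(\cdot,\vs)=0$: two distinct roots $\tau^-(\vs)<\tau^+(\vs)$ when $g(\vs)<0$, a double root when $g(\vs)=0$, and no root when $g(\vs)>0$. At $\vs = \vs_L$, $\Fcal(\tau_L,\vs_L)=0$ while ${\bar\tau}(\vs_L)<\tau_L$, so $g(\vs_L) = \Fcal({\bar\tau}(\vs_L),\vs_L) < 0$, which is (i). I would then show $g$ is strictly increasing along each ray $\lambda \mapsto \vs_L + \lambda\va$: computing $\frac{d}{d\lambda}g(\vs_L+\lambda\va) = \del_\vs\Fcal({\bar\tau},\vs)\cdot\va$ (the ${\bar\tau}$-derivative term vanishes by first-order optimality), and $\del_{s_i}\Fcal = \del_{s_i}p_i = T_i\,\del_{e_i}p_i/\dots$ — more directly, by \eqref{thermo} and the definition of $p_i$ as $-\del_\tau e_i$, one checks $\del_{s_i}p_i(\tau,s_i) > 0$ (this follows from the assumed strict convexity of $e_i(\tau,s_i)$ together with \eqref{hypH2}, or can be taken as a consequence of the standard thermodynamic ordering). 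Since $\va \in \Scal^N_+$ has nonnegative entries, not all zero, $\frac{d}{d\lambda}g > 0$ along the ray. Therefore along each ray $g$ starts negative (near $\vs_L$, by continuity from $g(\vs_L)<0$) and strictly increases; it may or may not reach $0$ before leaving the region where ${\bar\tau}$ stays well-defined and positive. I would \emph{define} ${\bar\Lambda}(\va)$ to be the supremum of $\lambda$ for which $g(\vs_L+\lambda\va)<0$ (equivalently, the unique zero of $g$ along the ray, if it exists; otherwise one extends the construction to the natural boundary — in the present setting the asymptotic conditions \eqref{hypH3} and convexity force $g(\vs_L+\lambda\va)\to +\infty$ as $\lambda\to\infty$, so the zero always exists and is unique). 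Then ${\bar\Lambda} \in \Ccal^1(\Scal^N_+,\RR^*_+)$ by the implicit function theorem applied to $g = 0$ (using $\frac{d}{d\lambda}g > 0$), the set $\Dcal$ defined in \eqref{def_D} is exactly $\{g<0\}$, and (i)--(iii) follow: on $\Dcal\setminus\{\vs_L\}$ we have $g(\vs)<0$ so two roots exist, both less than $\tau_L$ because ${\bar\tau}(\vs)<\tau_L$ and $\Fcal(\tau_L,\vs)>0$ (shown above) forces both roots into $(0,\tau_L)$; on $\Gamma$, $g=0$ so the roots coincide; outside ${\bar\Dcal}$, $g>0$ so no root. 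The inequality ${\bar\Lambda}(\va) > \Lambda_0(\va)$ will come later from comparing $g$'s zero with where $\Hcal(\tau^-(\vs),\vs)$ changes sign (Proposition~\ref{prop2_cp}(iv)), and is not needed for this lemma in isolation.

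The main obstacle I anticipate is \emph{not} the convexity argument, which is routine, but rather (a) verifying the sign $\del_{s_i}p_i > 0$ cleanly from the stated hypotheses — one must be careful which of \eqref{thermo}, \eqref{Tpositif}, \eqref{hypH2}, and strict convexity of $e_i$ is actually being invoked, since \eqref{hypH2} as written already asserts $\del_{s_i}p_i > 0$, so this may simply be quoted — and (b) controlling the domain: ensuring ${\bar\tau}(\vs)$ stays strictly positive and $\Ccal^1$ up to $\vs_L$ and on the closure ${\bar\Dcal}$, and that the ray-zero defining ${\bar\Lambda}(\va)$ genuinely exists and is finite and positive uniformly in $\va$. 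Uniform positivity of ${\bar\Lambda}$ on the compact set $\Scal^N_+$ follows from continuity once finiteness and strict positivity are established pointwise, so the real work is the pointwise behavior of $g$ along rays, which the asymptotics \eqref{hypH3}--\eqref{hypH4} are precisely designed to supply.
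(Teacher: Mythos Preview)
Your overall strategy matches the paper's: strict convexity of $\tau\mapsto\Fcal(\tau,\vs)$ from \eqref{hypH1} gives a unique minimizer $\bar\tau(\vs)$, the implicit function theorem gives $\Ccal^1$ regularity, and you correctly compute that the minimum value $g(\vs):=\Fcal(\bar\tau(\vs),\vs)$ is strictly increasing along each ray, since $\partial_\lambda g=\sum_i(\partial_{s_i}p_i)\,a_i>0$ by \eqref{hypH2} (the $\bar\tau$-term drops by optimality). The paper does exactly this, calling $g$ by the name $\phi(\lambda,\va)$.

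There is, however, a genuine gap at the two linked places where your argument diverges. First, your proof that $\bar\tau(\vs)<\tau_L$ on the whole cone is incorrect on two counts: hypothesis \eqref{hypH5} is only a limit, not a monotonicity statement for $\partial_\tau p_i$ in $s_i$; and even granting $\partial_\tau p(\tau_L,\vs)\le\partial_\tau p(\tau_L,\vs_L)$, this yields $\partial_\tau\Fcal(\tau_L,\vs)\le\partial_\tau\Fcal(\tau_L,\vs_L)$, an \emph{upper} bound, from which positivity does not follow. In fact the conclusion is false outside $\bar\Dcal$. Second, your claim that $g\to+\infty$ as $\lambda\to\infty$ via \eqref{hypH3} is unjustified: \eqref{hypH3} and \eqref{hypH4} concern $\tau$-limits of $p$ and $\partial_\tau p$, not $s$-limits, so they do not by themselves force $g$ to become positive at finite $\lambda$.

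The paper closes both gaps with one construction. Using \eqref{hypH5}, the function $\theta(\lambda):=m^2+\partial_\tau p(\tau_L,\vs_L+\lambda\va)$ goes from $\theta(0)>0$ (Lax) to $-\infty$, so it has a smallest zero $\lambda_\star(\va)>0$; at that parameter $\bar\tau=\tau_L$ and hence $g(\lambda_\star,\va)=\Fcal(\tau_L,\vs)=p(\tau_L,\vs)-p(\tau_L,\vs_L)>0$ by \eqref{hypH2}. Strict monotonicity of $g$ then gives a unique zero $\bar\Lambda(\va)\in(0,\lambda_\star(\va))$, and since $\theta>0$ on $[0,\lambda_\star(\va))\supset[0,\bar\Lambda(\va)]$ one obtains $\bar\tau(\vs)<\tau_L$ on $\bar\Dcal$ --- but only there, which is all the lemma asserts.
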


\begin{lemma}
\label{lem2_cp}
For all fixed $\vs\in S^+(\vs_L)$, $\Fcal(.,\vs)$ is strictly decreasing (respectively strictly increasing)
for all $\tau\in (0, {\bar\tau}(\vs))$ (resp. for all $\tau > {\bar\tau}(\vs)$) and achieves the following limits
$$
\lim_{\tau \to 0^+}\Fcal(\tau,\vs)=+\infty,\quad \lim_{\tau\to +\infty}\Fcal(\tau,\vs)=+\infty.
$$
As a consequence, $\Fcal(\tau,\vs)=0$ can be solved in $\tau$ only
when $\vs\in{\bar \Dcal}$, with exactly one solution when $\vs\in
\Gamma$ and exactly two solutions $\tau^\pm(\vs)$ for $\vs\in {\bar
  \Dcal}\backslash \Gamma$. These solutions define two maps $\tau^\pm
\in \Ccal^0({\bar
  \Dcal},\RR_+^*)\cap\Ccal^1\big(\Dcal\cup\{\vs_L\},\RR_+^*\big)$ with
the following properties:
\begin{enumerate}
\item $\tau^-(\vs) < {\bar \tau}(\vs) < \tau^+(\vs) < \tau_L$, for all $\vs\in \Dcal$,
\item $\tau^-(\vs) = {\bar \tau}(\vs) = \tau^+(\vs) < \tau_L$, for all $\vs = \vs_L + {\bar \Lambda}(\va)\va,~\va\in \Scal^N_+$,
\item $\tau^-(\vs_L) < {\bar \tau}(\vs_L) < \tau^+(\vs_L) = \tau_L$.
\end{enumerate}
\end{lemma}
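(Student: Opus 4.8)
The plan is to reduce everything to elementary one‑variable convex analysis of $\tau\mapsto\Fcal(\tau,\vs)$ at fixed $\vs$, and then read off the sign of its minimum value from Lemma~\ref{lem1_cp}. First I would note $\del_\tau\Fcal(\tau,\vs)=m^2+\del_\tau p(\tau,\vs)$ and $\del^2_{\tau\tau}\Fcal(\tau,\vs)=\del^2_{\tau\tau}p(\tau,\vs)=\sum_i\del^2_{\tau\tau}p_i(\tau,s_i)>0$ by \eqref{hypH1}, so $\Fcal(\cdot,\vs)$ is strictly convex on $(0,\infty)$. By Lemma~\ref{lem1_cp} it has a unique minimizer $\bar\tau(\vs)$ (equivalently, $\del_\tau\Fcal(\cdot,\vs)$, being strictly increasing and running from $-\infty$ to $m^2>0$ by \eqref{hypH4}, has a unique zero); hence $\Fcal(\cdot,\vs)$ is strictly decreasing on $(0,\bar\tau(\vs))$ and strictly increasing on $(\bar\tau(\vs),\infty)$, which is the first assertion. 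For the limits I would use \eqref{hypH3}: as $\tau\to0^+$, $p(\tau,\vs)\to+\infty$ while the affine term $m^2(\tau-\tau_L)$ stays bounded, so $\Fcal\to+\infty$; as $\tau\to+\infty$, $p(\tau,\vs)\to0$ while $m^2(\tau-\tau_L)\to+\infty$, so again $\Fcal\to+\infty$.

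The root count is then immediate: a strictly convex function on $(0,\infty)$ with limit $+\infty$ at both endpoints and minimizer $\bar\tau(\vs)$ has, respectively, no zero, the single zero $\bar\tau(\vs)$, or two zeros $\tau^-(\vs)<\bar\tau(\vs)<\tau^+(\vs)$ according as $\Fcal(\bar\tau(\vs),\vs)$ is $>0$, $=0$, or $<0$. Inserting the sign trichotomy of Lemma~\ref{lem1_cp}(i)--(iii) (negative on $\Dcal\cup\{\vs_L\}$, zero on $\Gamma$, positive on $S^+(\vs_L)\setminus{\bar\Dcal}$) gives exactly the claim: $\Fcal(\tau,\vs)=0$ is solvable only for $\vs\in{\bar\Dcal}$, with the unique solution $\bar\tau(\vs)$ on $\Gamma$ and two solutions on ${\bar\Dcal}\setminus\Gamma=\{\vs_L\}\cup\Dcal$. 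The interlacing $\tau^-<\bar\tau<\tau^+$ together with $\bar\tau(\vs)<\tau_L$ (Lemma~\ref{lem1_cp}) yields item (ii) and most of item (i). For the remaining inequalities: at $\vs=\vs_L$ one has $\Fcal(\tau_L,\vs_L)=0$, so $\tau_L$ is a root, and $\del_\tau\Fcal(\tau_L,\vs_L)=m^2+\del_\tau p(\tau_L,\vs_L)=m^2-(\rho_L c_L)^2>0$ by the Lax condition \eqref{lax}, placing $\tau_L$ on the increasing branch, hence $\tau_L=\tau^+(\vs_L)$ and $\bar\tau(\vs_L)<\tau^+(\vs_L)=\tau_L$, which is item (iii). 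For item (i), when $\vs\in\Dcal$ the representation $\vs=\vs_L+\lambda\va$ with $\va\in\Scal^N_+$, $\lambda>0$ forces $\vs\ge\vs_L$, $\vs\neq\vs_L$, so by \eqref{hypH2} we get $\Fcal(\tau_L,\vs)=p(\tau_L,\vs)-p(\tau_L,\vs_L)>0$; since $\bar\tau(\vs)<\tau_L$ and $\Fcal(\tau^+(\vs),\vs)=0<\Fcal(\tau_L,\vs)$ with $\Fcal(\cdot,\vs)$ strictly increasing beyond $\bar\tau(\vs)$, this forces $\tau^+(\vs)<\tau_L$.

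For the regularity claim, on $\Dcal\cup\{\vs_L\}$ one has $\Fcal(\bar\tau(\vs),\vs)<0$ strictly, so $\tau^\pm(\vs)$ sit strictly on either side of $\bar\tau(\vs)$ and $\del_\tau\Fcal(\tau^\pm(\vs),\vs)\neq0$ (negative at $\tau^-$, positive at $\tau^+$); the implicit function theorem, applied to $\Fcal(\tau,\vs)=0$ with $\Fcal$ as smooth as the pressure laws, then gives $\tau^\pm\in\Ccal^1(\Dcal\cup\{\vs_L\},\RR^*_+)$. Continuity up to $\Gamma$, where the two branches merge and the implicit function theorem degenerates into a square-root branch point, is the one delicate point. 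I would obtain it from a uniform convexity estimate $0=\Fcal(\tau^\pm(\vs),\vs)\ge\Fcal(\bar\tau(\vs),\vs)+\frac{c}{2}(\tau^\pm(\vs)-\bar\tau(\vs))^2$, with $c>0$ a lower bound for $\del^2_{\tau\tau}\Fcal$ valid on a compact region containing all the branches, combined with $\Fcal(\bar\tau(\vs),\vs)\to0$ as $\vs\to\vs_0\in\Gamma$ and continuity of $\bar\tau$ (Lemma~\ref{lem1_cp}): this squeezes $\tau^\pm(\vs)\to\bar\tau(\vs_0)$. Boundedness away from $0$ (so the target is $\RR^*_+$) comes from $\Fcal(\tau,\vs)\to+\infty$ as $\tau\to0^+$ uniformly on the compact set ${\bar\Dcal}$, and the upper cap $\tau^+<\tau_L$ is item (i). I expect this merging-at-$\Gamma$ continuity to be the main technical obstacle; the rest is routine convexity bookkeeping once Lemma~\ref{lem1_cp} is available.
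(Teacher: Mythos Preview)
Your proposal is correct and follows essentially the same route as the paper: strict convexity of $\Fcal(\cdot,\vs)$ from \eqref{hypH1}, the limits from \eqref{hypH3}, the root count read off from the sign of the minimum provided by Lemma~\ref{lem1_cp}, the identification $\tau^+(\vs_L)=\tau_L$ via the Lax condition, the inequality $\tau^+(\vs)<\tau_L$ from $\Fcal(\tau_L,\vs)=p(\tau_L,\vs)-p(\tau_L,\vs_L)>0$, and the implicit function theorem for $\Ccal^1$-regularity on $\Dcal\cup\{\vs_L\}$. The only point where you add something beyond the paper is the continuity at $\Gamma$: the paper simply asserts the limit $\tau^\pm(\vs_L+\lambda\va)\to\bar\tau(\vs_L+\bar\Lambda(\va)\va)$ as $\lambda\to\bar\Lambda(\va)^-$, whereas you supply a quantitative squeeze via the second-order Taylor bound $0\ge\Fcal(\bar\tau(\vs),\vs)+\tfrac{c}{2}(\tau^\pm(\vs)-\bar\tau(\vs))^2$; your version is more explicit and equally valid.
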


We now establish Lemma \ref{lem1_cp} underlining that the set $\Dcal$
entering the Proposition \ref{prop2_cp} will be explicitly derived
in the course of the proof.

\begin{proof}[Proof of Lemma \ref{lem1_cp}]
Let $\vs$ be fixed in $S^+(\vs)$. Arguing about the smoothness of the
internal energies, the map $\tau \mapsto \Fcal(.,\vs)$ is at least of class
$\Ccal^2(\RR^*_+)$. Easy calculations then yield for all $\tau > 0$:
$$
\frac{\partial \Fcal}{\partial \tau}(\tau,\vs)=\frac{\partial p}{\partial \tau}(\tau,\vs)+m^2,\nonumber\\
\frac{\partial^2 \Fcal}{\partial \tau^2}(\tau,\vs)=\frac{\partial^2 p}{\partial \tau^2}(\tau,\vs).\nonumber
$$
On the one hand, the map $\tau\mapsto\frac{\partial \Fcal}{\partial
 \tau}(\tau,\vs)$ is strictly increasing in view of the genuine
nonlinearity assumption (\ref{hypH2}) for the total pressure. On the other hand, 
assumptions (\ref{hypH4}) on the asymptotic behaviour of
$\frac{\partial p}{\partial \tau}$  imply that:
$$
\lim_{\tau\to 0^+}\frac{\partial \Fcal}{\partial \tau}(\tau,\vs)=-\infty~\mbox{ and }~
\lim_{\tau\to +\infty}\frac{\partial \Fcal}{\partial \tau}(\tau,\vs)=m^2>0.
$$

As a consequence, for all $\vs\in S^+(\vs_L)$, there exists a unique $\bar{\tau}(\vs)>0$ so that 
$\frac{\partial \Fcal}{\partial \tau}(\bar{\tau}(\vs),\vs)=0$. This defines a map ${\bar \tau}$ in 
$\Ccal^1\big( S^+(\vs_L),\RR^*_+\big)$ thanks to the implicit function
theorem. Note that the assumption (\ref{lax}) on the relative Mach
number implies that $\partial_\tau\Fcal(\tau_L,\vs_L) > 0$ while
$\partial_\tau\Fcal({\bar \tau}(\vs_L),\vs_L)=0$, therefore hypothesis
(\ref{hypH1}) ensures:
\be
\label{lttl}
{\bar \tau}(\vs_L)<\tau_L.
\ee
Next, we construct the set $\Dcal\subset S^+(\vs_L)$ introduced in
Proposition \ref{prop2_cp} when studying for existence the zeros of
$\vs\in S^+(\vs_L) \mapsto \Fcal({\bar \tau}(\vs),\vs)$. In this way, we
first notice that by definition of ${\bar \tau}(\vs)$, for all
$\vs\in S^+(\vs_L)$ we have:
$$
\Fcal({\bar \tau}(\vs),\vs) = p({\bar \tau}(\vs),s) - p(\tau_L,\vs_L) - {{\partial p}\over{\partial \tau}}({\bar \tau}(\vs),\vs)\big({\bar \tau}(\vs)-\tau_L).
$$
Introducing the auxiliary function $\phi : \RR_+\times\Scal^N_+ \to S^+(\vs_L)$ defined by 
$$
\phi(\lambda,\va) = \Fcal\big({\bar \tau}(\vs_L + \lambda \va), \vs_L+\lambda \va\big)
$$
straightforward calculations give 
\be
\label{derphi}
{{\partial \phi}\over{\partial \lambda}}(\lambda,\va) = \sum_{1\le i\le N}\left(
{{\partial p_i}\over{\partial s_i}}a_i + 
\left(\sum_{1\le j\le N} {{\partial^2 p_j}\over{\partial \tau^2}}
{{\partial {\bar \tau}}\over{\partial s_i}}+
{{\partial^2 p_i}\over{\partial \tau \partial s_i}}\right)a_i\right).
\ee

But differentiating the identity $\partial_\tau p({\bar
  \tau}(\vs),\vs) = - m^2$, valid for all $\vs \in S^+(\vs_L)$, easily implies that (\ref{derphi}) reduces to 
\be
{{\partial \phi}\over{\partial \lambda}}(\lambda,\va) = \sum_{1\le i\le N}
{{\partial p_i}\over{\partial s_i}}\left({\bar \tau}(\vs_L+\lambda\va),\vs_L+\lambda\va\right)a_i.
\ee
This derivative is therefore strictly positive for all $\va\in
\Scal^N_+$, as follows from (\ref{hypH1}). Next, arguing about the strict
convexity in $\tau$ of the total pressure and the property
(\ref{lttl}) expressing that ${\bar \tau}(\vs_L) < \tau_L$, we get 
$$
\phi(0,\va) = p({\bar \tau}(\vs_L),\vs_L)-p(\tau_L,\vs_L)-{{\partial p}\over{\partial \tau}}({\bar \tau}(\vs_L),\vs_L)\big({\bar \tau}(\vs_L)-\tau_L\big) < 0.
$$

To conclude, we need to check that for all $\va\in
\Scal^N_+$, the map $\lambda \mapsto \phi(\lambda,\va)$ achieves positive
values for finite values of $\lambda$. Indeed, the implicit function
theorem will thus ensure the existence of a map ${\bar \Lambda} \in
\Ccal^1\big(\Scal^N_+,\RR^*_+\big)$ well defined in $\Dcal$ 
with the following property:
\be
\label{barL}
\phi({\bar \Lambda}(\va),\va) = 0,
\qquad\mbox{for all}\quad \va\in \Scal^N_+
\ee
with $\phi(\lambda, \va) < 0$ (respectively $>0$) for all $\lambda <
{\bar \Lambda}(\va)$ (resp. $\lambda >{\bar \Lambda}(\va)$). This is
nothing but the required result. To establish the validity of
(\ref{barL}), we show that for all $\va\in \Scal^N_+$ there exists
$\lambda_\star(\va)>0$ so that 
\be
\label{egtl}
{\bar \tau}(\vs_L + \lambda_\star(\va)) = \tau_L.
\ee
Indeed for such values of $\lambda$, $\phi$ boils down to 
$$
\phi(\lambda_\star(\va),\va) = p(\tau_L,\vs)-p(\tau_L,\vs_L) > 0,
$$
as follows from (\ref{hypH1}). To derive (\ref{egtl}),
we introduce the auxiliary smooth function $\theta \in
\Ccal^1\big(\RR_+,\RR\big)$ defined for all $\va\in \Scal^N_+$ by 
$$
\theta(\lambda) = {{\partial p}\over{\partial \tau}}(\tau_L,\vs_L + \lambda\va) + m^2.
$$

For any given $\va$ in $\Scal^N_+$, we establish the existence of
solutions to $\theta(\lambda)=0$, $\lambda_\star(a)$ being chosen to
be for instance the smallest one for definiteness. Existence of such
solution(s) readily follows from 
the assumption (\ref{lax}) on the relative Mach number ensuring that
$\theta(0) > 0$ while the asymptotic condition (\ref{hypH4}) ensures
$\theta(\lambda)<0$ for large enough $\lambda$. Note that the
solutions under consideration are strictly positive. In addition and
since ${\bar \Lambda}(\va) < \lambda_\star(\va)$ for all $\va\in
\Scal^N_+$, we obtain 
\be
\label{btlttl}
{\bar \tau}(\vs) < \tau_L, \quad \quad \vs\in {\bar \Dcal},
\ee 
which concludes the proof of Lemma \ref{lem1_cp}.
\end{proof}

\begin{proof}[Proof of Lemma \ref{lem2_cp}]
Let be given $\vs$ in $ S^+(\vs_L)$. By definition of ${\bar
  \tau}(\vs)$, $\Fcal(.,\vs)$ achieves the monotonicity properties
stated in Lemma \ref{lem2_cp}, the required limits immediately
follows from the asymptotic conditions (\ref{hypH3}). 
The study of the sign of $\Fcal({\bar \tau}(\vs),\vs)$ 
(as we 
  proposed earlier) obviously implies that for fixed $\vs\in
S^+(\vs_L)$, the equation $\Fcal(\tau,\vs)=0$ has exactly two
solutions $\tau^-,~\tau^+$ in ${\bar \Dcal}\backslash \Gamma$ so that
$\tau^- < {\bar \tau}(\vs) < \tau^+$; this equation has exactly one solution, namely
${\bar \tau}(\vs)$, when $\vs \in \Gamma$ and has no solution whenever
$\vs\in S^+(\vs_L)\backslash{\bar \Dcal}$. In addition, using the
notation introduced in the proof of Lemma \ref{lem1_cp}, it can be
easily seen that the following limits hold true:
\be
\label{coincide}
\lim_{\lambda \to {\bar \Lambda}(\va)^-} \tau^\pm(\vs_L+\lambda \va) =
    {\bar \tau}(\vs_L+{\bar \Lambda}(\va)\va),
\qquad\mbox{for all}\quad \va\in\Scal^N_+
\ee
These observations allow for the definition of two maps $\tau^\pm : {\bar \Dcal}\to \RR^*_+$ satisfying:
$$
\Fcal(\tau^\pm(\vs),\vs)=0, \quad \hbox{for all } \quad \vs\in {\bar \Dcal},
$$
and so that 
\be
\label{lip}
\tau^-(\vs) < {\bar \tau}(\vs) < \tau^+(\vs), \quad \vs \in {\bar \Dcal}\backslash \Gamma; \quad \tau^-(\vs) = {\bar \tau}(\vs) = \tau^+(\vs), \quad \vs\in \Gamma.
\ee

Then the above inequalities yield $\partial_\tau
\Fcal(\tau^\pm(\vs),\vs) \not= 0$ for all $\vs\in \Dcal\cup\{\vs_L\}$
in view of (\ref{lip}) so that the implicit function theorem ensures
that $\tau^\pm\in \Ccal^1(\Dcal\cup\{\vs_L\},\RR^*_+)$ while
(\ref{coincide}) gives that $\tau^\pm\in \Ccal^0({\bar \Dcal},\RR^*_+)$ .

Next, focusing to some given $\vs\in {\bar \Dcal}\backslash \{\vs_L\}$, we observe that
$$
\Fcal(\tau_L,\vs) = p(\tau_L,\vs)-p(\tau_L,\vs_L) > 0
$$ 
so that necessarily, either $\tau_L < \tau^-(\vs)$ or $\tau^+(\vs) <
\tau_L$. In addition, the identity $\Fcal(\tau_L,\vs_L)=0$ expresses
that either $\tau^-(\vs_L)=\tau_L$ or $\tau^+(\vs_L)=\tau_L$. But
Lemma \ref{lem1_cp} ensures that ${\bar \tau}(\vs) < \tau_L$ for all
$\vs\in {\bar \Dcal}$. This concludes the proof.
\end{proof}

Equipped with these two lemmas, the proof of Proposition \ref{prop1_cp} is 
essentially completed: the required inequality 
${\bar \Lambda}(\va) > \Lambda_0(\va)$ for all $\va\in \Scal^N_+$ will be deduced from the derivation of the set $\Kcal$
we propose hereafter. We will need the following technical result.

\begin{lemma}
\label{lem3_cp}
For all $\vs\in\Gamma$, $\Hcal({\bar \tau}(\vs),\vs) > 0$.
\end{lemma}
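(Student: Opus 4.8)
The plan is to analyze the sign of $\Hcal$ at the critical point $\bar\tau(\vs)$ of $\Fcal(\cdot,\vs)$ precisely on the curve $\Gamma$, where by Lemma~\ref{lem1_cp}(ii) we have $\Fcal(\bar\tau(\vs),\vs)=0$, i.e.\ $\bar\tau(\vs)$ is a genuine (degenerate) equilibrium of the dynamical system. First I would recall the two defining relations at such a point: $\partial_\tau p(\bar\tau(\vs),\vs) = -m^2$ (definition of $\bar\tau$) and $p(\bar\tau(\vs),\vs) - p(\tau_L,\vs_L) + m^2(\bar\tau(\vs)-\tau_L)=0$ (the relation $\Fcal=0$ on $\Gamma$). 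Using these two identities to eliminate $p(\tau_L,\vs_L)$ from the expression
$$
\Hcal(\bar\tau(\vs),\vs)= e(\bar\tau(\vs),\vs)-e(\omega_L)-\frac{m^2}{2}(\bar\tau(\vs)^2-\tau_L^2)+(m^2\tau_L+p(\omega_L))(\bar\tau(\vs)-\tau_L),
$$
should collapse it to a one-variable expression in $\bar\tau(\vs)$ along the fiber (with $\vs$ fixed), of the form $e(\bar\tau) - e(\tau_L) - (\tau_L - \bar\tau)\,\partial_\tau e(\bar\tau) + \frac{m^2}{2}(\tau_L-\bar\tau)^2$ evaluated at the appropriate entropy arguments, i.e.\ a second-order Taylor remainder plus a positive quadratic term. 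The key inequality I would then exploit is the strict convexity of $\tau\mapsto e(\tau,\vs)$ (which follows from the strict convexity of $(\tau,s_i)\mapsto e_i$ assumed in \eqref{thermo1}), together with $\bar\tau(\vs)<\tau_L$ from \eqref{btlttl}.

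More concretely, I would proceed as follows. Step one: write $\Hcal(\bar\tau(\vs),\vs)$ and substitute $p(\tau_L,\vs_L) = p(\bar\tau(\vs),\vs) + m^2(\bar\tau(\vs)-\tau_L)$ and $p(\bar\tau(\vs),\vs) = -\partial_\tau e(\bar\tau(\vs),\vs)$ — being careful that $e$ is the total internal energy $\sum_i e_i$ and that the entropy slot must be handled componentwise, since $\Hcal$ carries $e(\omega)=\sum_i e_i(\tau,s_i)$ with the $s_i$ frozen. After this substitution the $m^2$-linear terms should recombine into a perfect square $\frac{m^2}{2}(\tau_L-\bar\tau(\vs))^2 \ge 0$, strictly positive since $\bar\tau(\vs)<\tau_L$. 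Step two: handle the purely thermodynamic remainder $e(\bar\tau(\vs),\vs) - e(\tau_L,\vs) + (\tau_L-\bar\tau(\vs))\,\partial_\tau e(\bar\tau(\vs),\vs)$ — wait, I must be careful about which entropy the term $e(\omega_L)$ uses; since $\vs\ge\vs_L$ on $\Gamma$ and $s_i\mapsto e_i$ is increasing (because $T_i=\partial_{s_i}e_i>0$), comparing $e(\cdot,\vs)$ against $e(\cdot,\vs_L)$ only \emph{helps}, contributing a further nonnegative term $e(\tau_L,\vs)-e(\tau_L,\vs_L)\ge 0$. Step three: the residual $e(\bar\tau,\vs)-e(\tau_L,\vs)+(\tau_L-\bar\tau)\partial_\tau e(\bar\tau,\vs)$ is exactly the first-order Taylor remainder of the convex function $\tau\mapsto e(\tau,\vs)$ around $\bar\tau(\vs)$ evaluated at $\tau_L$, hence $\ge 0$ by convexity, and $>0$ by strict convexity as long as $\tau_L\neq\bar\tau(\vs)$, which holds by \eqref{btlttl}. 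Summing the three nonnegative contributions, at least one of which is strictly positive, gives $\Hcal(\bar\tau(\vs),\vs)>0$.

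The main obstacle I anticipate is bookkeeping with the entropy arguments and making sure the cancellation of the $m^2$-terms is exact: the definition of $\Hcal$ in Proposition~\ref{propHconstant} is written with $e(\omega_L)=e(\tau_L,\vs_L)$, and one must track both the $\tau$-shift (from $\bar\tau(\vs)$ to $\tau_L$) and the $\vs$-shift (from $\vs$ to $\vs_L$) separately, because convexity of $e$ in the pair $(\tau,\vs)$ is available but it is cleaner to split the comparison into a pure-$\tau$ convexity estimate at fixed $\vs$ plus a pure-$\vs$ monotonicity estimate at fixed $\tau_L$. A secondary subtlety is confirming that the "perfect square" really appears — i.e.\ that after eliminating $p(\tau_L,\vs_L)$ the coefficients of $\bar\tau(\vs)$, $\bar\tau(\vs)^2$, $\tau_L$, $\tau_L^2$, and the cross term $\bar\tau(\vs)\tau_L$ assemble into $\frac{m^2}{2}(\tau_L-\bar\tau(\vs))^2$; this is a short but error-prone algebraic rearrangement. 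Once those two points are checked, strict positivity is immediate from \eqref{btlttl} together with the strict convexity of $e$ and positivity of the temperatures.
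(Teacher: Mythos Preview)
Your algebraic decomposition is correct: after substituting $p(\tau_L,\vs_L)=p(\bar\tau(\vs),\vs)+m^2(\bar\tau(\vs)-\tau_L)$ and $p=-\partial_\tau e$, one indeed obtains
\[
\Hcal(\bar\tau(\vs),\vs)=\underbrace{\bigl[e(\bar\tau,\vs)-e(\tau_L,\vs)+(\tau_L-\bar\tau)\,\partial_\tau e(\bar\tau,\vs)\bigr]}_{(\mathrm a)}
+\underbrace{\bigl[e(\tau_L,\vs)-e(\tau_L,\vs_L)\bigr]}_{(\mathrm b)}
+\underbrace{\tfrac{m^2}{2}(\tau_L-\bar\tau)^2}_{(\mathrm c)}.
\]
The gap is in Step three. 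The expression $(\mathrm a)$ is \emph{not} the first-order Taylor remainder of $\tau\mapsto e(\tau,\vs)$ around $\bar\tau$ evaluated at $\tau_L$; it is its negative. For a convex function $f$ the remainder $f(y)-f(x)-f'(x)(y-x)$ is nonnegative, so here
\[
e(\tau_L,\vs)-e(\bar\tau,\vs)-\partial_\tau e(\bar\tau,\vs)(\tau_L-\bar\tau)\ge 0,
\]
which means $(\mathrm a)\le 0$, with strict inequality since $\bar\tau(\vs)<\tau_L$ and $e$ is strictly convex in $\tau$. Thus your ``three nonnegative contributions'' claim fails: $(\mathrm a)$ pulls in the wrong direction, and convexity of $e$ alone cannot conclude.

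What rescues the argument is precisely the second identity you listed but never used, $\partial_\tau p(\bar\tau(\vs),\vs)=-m^2$, together with the convexity of $p$ in $\tau$ (assumption \eqref{hypH1}). Since $\partial_\tau^2 e=-\partial_\tau p$ and $\partial_\tau p(r,\vs)>\partial_\tau p(\bar\tau,\vs)=-m^2$ for $r>\bar\tau$, one gets $-\partial_\tau p(r,\vs)<m^2$ on $(\bar\tau,\tau_L)$, hence
\[
-(\mathrm a)=\int_{\bar\tau}^{\tau_L}\!\!\int_{\bar\tau}^{s}\bigl(-\partial_\tau p(r,\vs)\bigr)\,dr\,ds
\;<\;\frac{m^2}{2}(\tau_L-\bar\tau)^2=(\mathrm c),
\]
so $(\mathrm a)+(\mathrm c)>0$. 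This is exactly the content of the paper's auxiliary function $\Psi(\tau)=e(\tau,\vs)-e(\tau_L,\vs)+(\tau-\tau_L)p(\tau,\vs)-\tfrac12(\tau-\tau_L)^2\partial_\tau p(\tau,\vs)$, for which $\Psi'(\tau)=-\tfrac12(\tau-\tau_L)^2\partial_{\tau}^2 p(\tau,\vs)\le 0$ and $\Psi(\tau_L)=0$ give $\Psi(\bar\tau)>0$; note that $\Psi(\bar\tau)=(\mathrm a)+(\mathrm c)$ once $\partial_\tau p(\bar\tau,\vs)=-m^2$ is plugged in. In short: your overall strategy and the entropy-monotonicity term $(\mathrm b)$ match the paper, but the crucial positivity of $(\mathrm a)+(\mathrm c)$ requires the convexity of $p$, not of $e$.
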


This statement actually indicates that there is no critical point on $\Gamma$.

\begin{proof}
To shorten the notation, let us introduce 
$$
\epsilon(\tau,\vs)=\sum_{i=1}^N \epsilon_i(\tau,s_i),
$$
and consider the auxiliary function $\psi \in \Ccal^1(\Gamma,\RR)$ defined for all $\vs\in\Gamma$ by 
$$
\psi(\vs)=\epsilon({\bar \tau}(\vs),\vs)-\epsilon(\tau_L,\vs)+H_L({\bar \tau}(\vs)-\tau_L)-
\frac{m^2}{2}({\bar \tau}(\vs)^2-\tau_L^2).
$$
Here we set
$$
H_L = m^2 \tau_L + p(\tau_L,\vs_L),
$$
so that $\Hcal(\tau,\vs)$ recasts as 
$$
\Hcal({\bar \tau}(\vs),\vs)=\psi(\vs)+\epsilon(\tau_L,\vs)-\epsilon(\tau_L,\vs_L).
$$

Arguing about the identity $\Fcal(\bar{\tau}(\vs),\vs)=0$ valid for
all $\vs\in\Gamma$ (see Lemma \ref{lem1_cp}.(ii)), we have 
\be\label{starl6}
H_L=p(\bar{\tau}(\vs),\vs)+m^2\bar{\tau}(\vs),\nonumber\\
m^2(\bar{\tau}(\vs)-\tau_L)=p(\tau_L,\vs_L)-p(\bar{\tau}(\vs),\vs)\nonumber,
\ee
which gives successively for all $\vs\in\Gamma$:
\be
\nonumber
\begin{aligned}
\psi(\vs)&=&\epsilon(\bar{\tau}(\vs),\vs)-\epsilon(\tau_L,\vs)+(\bar{\tau}(\vs)-\tau_L)\left(p(\bar{\tau}(\vs),\vs)
+\frac{m^2}{2}(\bar{\tau}(\vs)-\tau_L)\right),\\
&=& \epsilon(\bar{\tau}(\vs),\vs)-\epsilon(\tau_L,\vs)+(\bar{\tau}(\vs)-\tau_L)\left(p(\bar{\tau}(\vs),\vs)-
\frac{1}{2}(p(\bar{\tau}(\vs),\vs)-p(\tau_L,s_L))\right).
\end{aligned}
\ee
Moreover, the two identities $\Fcal({\bar \tau}(\vs),\vs)=0$ and
$\frac{\partial\Fcal}{\partial\tau}(\bar{\tau},\vs)=0$ valid for all
$\vs\in\Gamma$ are easily seen to give for the $\vs$ under
consideration:
$$
p(\bar{\tau}(\vs),\vs)-p(\tau_L,\vs_L)=(\bar{\tau}(\vs)-\tau_L)
\frac{\partial p}{\partial\tau}(\bar{\tau}(\vs),\vs).
$$

Consequently, for all $\vs\in\Gamma$
$$
\psi(\vs)=\epsilon(\bar{\tau}(\vs),\vs)-\epsilon(\tau_L,\vs)+(\bar{\tau}(\vs)-\tau_L)p(\bar{\tau}(\vs),\vs)
-\frac{1}{2}(\bar{\tau}(\vs)-\tau_L)^2\frac{\partial p}{\partial\tau}(\bar{\tau}(\vs),\vs).
$$
To conclude, we show that
\be
\label{chbtp}
\Hcal(\bar{\tau}(\vs),\vs)=\theta(\bar{\tau}(\vs),\vs)+\epsilon(\tau_L,\vs)-\epsilon(\tau_L,\vs_L) > 0.
\ee
Since $\frac{\partial\epsilon_i}{\partial s_i}(\tau_L,s_i)=T_i(\tau_L,s_i)>0$ then
$\epsilon(\tau_L,\vs)-\epsilon(\tau_L,\vs_L)>0$ for all $s\in\Gamma$ since $\vs_L\not\in\Gamma$.
Indeed, observe that Lemma \ref{lem2_cp} implies that equality to
zero holds iff $\vs=\vs_L$ but $\vs_L\not\in\Gamma$.

To show (\ref{chbtp}), we study the following auxiliary function
$\Psi \in \Ccal^1\big(\RR^*_+,\RR\big)$, setting for fixed
$s\in\Gamma$:
$$
\Psi(\tau)=\epsilon({\tau},\vs)-\epsilon(\tau_L,\vs)+({\tau}-\tau_L)p({\tau},\vs)
-\frac{1}{2}({\tau}-\tau_L)^2\frac{\partial p}{\partial\tau}({\tau},\vs).
$$
Easy calculations give 
$$
\frac{\partial\Psi}{\partial\tau}(\tau)=-\frac{1}{2}(\tau-\tau_L)^2\frac{\partial^2 p}{\partial\tau^2}
(\tau,\vs)\le 0,
$$
with $\Psi(\tau_L)=0$. Consequently, $\Psi(\tau)>0$ for all $\tau<\tau_L$. Since 
$\bar{\tau}(\vs)<\tau_L$ (see Lemma \ref{lem1_cp}) then $\Psi(\bar{\tau}(\vs))>0$ for all $\vs\in\Gamma$, and 
we thus obtain the required inequality: $\Hcal(\bar{\tau}(\vs),\vs)>0$.
\end{proof}

\begin{proof}[Proof of Proposition \ref{prop2_cp}]
We first establish the required properties of $\Hcal$ related to
the branch of solutions $\tau^+$. Arguing about the identity $\tau^+(\vs)={\bar \tau}(\vs)$
for all $\vs\in\Gamma$, the technical Lemma \ref{lem3_cp} allows to
restrict ourselves to $\vs\in {\bar \Dcal}\backslash \Gamma$ where
$\tau^+$ is continuously differentiable. For such $\vs$, the identity
$\Fcal(\tau^+(\vs),\vs)=0$ re-expresses equivalently:
\begin{eqnarray}
m^2\big(\tau^+(\vs)-\tau_L\big)=\left( p(\tau_L,\vs_L)-p(\tau^+(\vs),\vs)\right),\label{l7b}.
\end{eqnarray}

Let us evaluate $\Hcal(\tau^+(\vs),\vs)$ as follows:
$$
\Hcal(\tau^+(\vs),\vs)=\epsilon(\tau^+(\vs),\vs)-\epsilon(\tau_L,\vs_L)
+(\tau^+(\vs)-\tau_L)\!\!\left(p(\tau^+(\vs),s)+\frac{m^2}{2}(\tau^+(\vs)-\tau_L)\right)
$$
where $\epsilon(\tau,\vs)=\sum_{i=1}^N \epsilon_i(\tau,s_i)$. Using (\ref{l7b}), we 
then  obtain 
$$
\Hcal(\tau^+(\vs),\vs)=\epsilon(\tau^+(\vs),\vs)-\epsilon(\tau_L,\vs_L)
-\frac{1}{2m^2}\left(p^2(\tau^+(\vs),\vs)-p^2(\tau_L,\vs_L)\right).
$$
Let us introduce the auxiliary function $\Theta : \RR^*_+\times{\bar \Dcal} \to \RR$ 
by setting 
$$
\Theta(\tau,\vs)=\epsilon(\tau,\vs)-\frac{1}{2m^2}p^2(\tau,\vs),
$$
so that for all $\vs\in{\bar \Dcal}$:
\be
\label{rt1_cp}
\Hcal(\tau^+(\vs),\vs)=\Theta(\tau^+(\vs),\vs)-\Theta(\tau_L,\vs_L),
\ee
with $\vs\mapsto \Theta(\tau^+(\vs),\vs) \in \Ccal^1({\bar \Dcal}\backslash\Gamma,\RR)$.
Since $\tau^+(\vs_L)=\tau_L$ by Lemmas~\ref{lem2_cp} and \ref{rt1_cp} reads equivalently:
$$
\Hcal(\tau^+(\vs),\vs)=\Theta(\tau^+(\vs),\vs)-\Theta(\tau^+(\vs_L),\vs_L).
$$

Moreover, for all $\vs\in{\bar \Dcal}\backslash \Gamma$ we have:
\begin{eqnarray}
\frac{\partial}{\partial s_i}\theta(\tau^+(\vs),\vs)
&=& \frac{\partial\tau^+}{\partial s_i}(\vs)\frac{\partial \Hcal}{\partial\tau}(\tau^+(\vs),\vs)+
   \frac{\partial \Hcal}{\partial s_i}(\tau^+(\vs),\vs),\nonumber\\
&=& -\frac{\partial\tau^+}{\partial s_i}(\vs)\Fcal(\tau^+(\vs),\vs)+
   \frac{\partial \epsilon_i}{\partial s_i}(\tau^+(\vs),\vs)\nonumber\\
&=&\frac{\partial \epsilon_i}{\partial s_i}(\tau^+(\vs),\vs)= T_i(\tau^+(\vs),\vs)>0,
\end{eqnarray}
where we used the identity $\Fcal(\tau^+(\vs),\vs)=0$. Consequently, we deduce that
$$
\theta(\tau^+(\vs),\vs)-\theta(\tau^+(\vs_L),\vs_L)\ge 0,~~\forall \vs\in{\bar \Dcal}\backslash\Gamma
$$
with equality to zero iff $\vs=\vs_L$ (see Lemma
\ref{lem2_cp}). Combining the previous steps with Lemma \ref{lem3_cp}
gives the required properties $(i)$ and $(ii)$.

We now derive the remaining properties of $\Hcal$ related to
$\tau^-$. Observe that the technical Lemma \ref{lem3_cp}
immediately gives 
\be
\label{tmG}
\Hcal(\tau^-(\vs),\vs) > 0, \quad   \vs\in\Gamma,
\ee
since $\tau^-(\vs)={\bar \tau}(\vs)$ for the $\vs$ under consideration. 
We can now 
obtain the following estimate 
$$
\label{tmsl}
\Hcal(\tau^-(\vs_L),\vs_L) < 0.
$$
To that purpose, let us introduce the following auxiliary function $\psi : \RR^*_+ \to \RR$ setting:
$$
\psi(\tau) = \Hcal(\tau,\vs_L).
$$
Since $\psi'(\tau) = -\Fcal(\tau,\vs_L)$ for all $\tau >0$, Lemma \ref{lem2_cp} is easily seen to imply that 
$\psi$ strictly increases in $\Big(\tau^-(\vs_L),\tau^+(\vs_L)\big)$ with $\Hcal(\tau^+(\vs_L),\vs_L)=0$ as  
just established. This yields inequality (\ref{tmsl}).

To conclude the proof, we follow exactly the same steps as those
developed in the proof of Lemma \ref{lem1_cp} devoted to the
derivation of the subset $\Kcal\in S^+(\vs_L)$.

We introduce the following auxiliary function defined by 
$$
\Phi(\lambda,\va) = \Hcal\big(\tau^-(\vs_L + \lambda \va), \vs_L+\lambda \va\big), \quad 
 \quad \va\in \Scal^N_+,\quad \lambda \in [0,{\bar \Lambda}(\va)[.
$$
Note that this function is continuously differentiable on its domain of definition 
since, in view of Lemma \ref{lem2_cp}, the function
$(\tau,\lambda)\mapsto\tau^-(\vs_L+\lambda \va)$ is differentiable.
Straightforward calculations then give
\be
\label{derPhi}
\aligned
{{\partial \Phi}\over{\partial \lambda}}(\lambda,\va) 
= &
-\Fcal(\tau^-(\vs_L+\lambda \va),\vs_L+\lambda \va)\left( \sum_{1\le i\le N} 
{{\partial \tau^-}\over{\partial s_i}}a_i\right) 
\\
&
+ \sum_{1\le i\le N} T_i\left(\tau^-(\vs_L+\lambda \va),\vs_L+\lambda \va\right)a_i.
\endaligned
\ee
But the following identity $\Fcal(\tau^-(\vs_L+\lambda
\va),\vs_L+\lambda \va) = 0$ holds true by definition for all $\va\in
\Scal^N_+$ and $\lambda \in [0,{\bar\Lambda}(\va)]$ so that
(\ref{derPhi}) reduces to 
$$
{{\partial \Phi}\over{\partial \lambda}}(\lambda,\va) = \sum_{1\le i\le N} T_i(\tau^-(\vs_L+\lambda \va),\vs_L+\lambda \va)a_i > 0.
$$
Arguing about the inequalities (\ref{tmG}) and (\ref{tmsl}), the
implicit function theorem implies the existence of a map $\Lambda_0
\in \Ccal^1(\Scal^N_+,\RR^*_+)$ with the following properties:
$$
\Phi(\Lambda_0(\va),\va) = 0, \quad  \quad \va\in \Scal^N_+,
$$
together with $\Phi(\lambda,\va) < 0$ for all $\lambda \in [0,\Lambda_0(\va)[$ and $\Phi(\lambda,\va)>0$ for all
$\lambda\in ]\Lambda_0(\va),{\bar \Lambda}(\va)]$. This concludes the proof of Proposition \ref{prop2_cp}.
\end{proof}

\begin{proof}[Proof of Proposition \ref{prop3_cp}]
Arguing about the identity
$\partial_\tau\Hcal(\tau,\vs)=-\Fcal(\tau,\vs)$ valid for all
$(\tau,\vs)\in \RR^*_+\times\Kcal$,
Lemma \ref{lem2_cp} immediately implies that the smooth map $\tau
\mapsto \Hcal(\tau,\vs)$, $\vs$ being fixed in $\Kcal$,
strictly decreases in $]0,\tau^-(s)[$ and $]\tau^+(s),+\infty[$ while
it strictly increases in $]\tau^-(\vs),\tau^+(\vs)[$ with the
following limits $\lim_{\tau\to 0^+}\Hcal(\tau,\vs) = +\infty$ and
$\lim_{\tau\to\infty}\Hcal(\tau,\vs)=-\infty$ in view of the
asymptotic conditions \ref{thermo1}. In addition, for all $\vs\in\Kcal$,
we infer from Proposition \ref{prop2_cp} that $\Hcal(\tau^-(\vs),\vs) <
0$ and $\Hcal(\tau^+(\vs),\vs) > 0$. These observations allow
the definition of three maps, namely ${\check \Tcal}$, $\Tcal$, ${\hat
  \Tcal} : \Kcal \to \RR^*_+$ with the following properties:
$$
\Hcal({\check \Tcal}(\vs),\vs) = \Hcal({ \Tcal}(\vs),\vs) =
\Hcal({\hat \Tcal}(\vs),\vs) = 0, \quad  \quad \vs\in
\Kcal,
$$
together with 
$$
0 < {\check \Tcal}(\vs) < \tau^-(\vs) < { \Tcal}(\vs) < \tau^+(\vs) <
{\hat \Tcal}(\vs), \quad  \quad \vs\in \Kcal.
$$

Next, using the notation introduced in the proof of Lemma \ref{lem1_cp}, we first compute for all $\va\in \Scal^N_+$:
$$
\lim_{\lambda\to 0^+}{\check \Tcal}(\vs_L+\lambda\va) < \lim_{\lambda
  \to 0^+} { \Tcal}(\vs_L + \lambda \va) = \lim_{\lambda \to 0^+}
    {\hat \Tcal}(\vs_L + \lambda \va) = \tau^+(\vs_L),
$$
since $\Hcal(\tau^-(\vs_L),\vs_L) < \Hcal(\tau^+(\vs_L),\vs_L) = 0$ in
view of $(iii)$ and $(i)$ in Proposition \ref{prop2_cp}.
In the same way, we get 
$$
\aligned
\lim_{\lambda\to \Lambda_0(\va)}{\check \Tcal}(\vs_L+\lambda\va) 
& = \lim_{\lambda \to\Lambda_0(\va)} { \Tcal}(\vs_L + \lambda \va) 
\\
& = \tau^-(\vs_L+\Lambda_0(\va)\va) < \lim_{\lambda \to \Lambda_0(\va)} {\hat \Tcal}(\vs_L + \lambda \va),
\endaligned
$$
since $\Hcal(\tau^-(\vs),\vs) = 0 < \Hcal(\tau^+(\vs),\vs)$ for all
$\vs\in \Ccal$, in view of Proposition \ref{prop2_cp}. To conclude, we
have to establish the smoothness properties put forward in
Proposition \ref{prop3_cp}. In view of the monotonicity properties of
$\tau \mapsto \Hcal(\tau,\vs)$ we have just established for all
$\vs\in\Kcal$, all the three maps are obviously in
$\Ccal^1\big(\Kcal,\RR^*_+\big)\cap\Ccal^0\big({\bar
  \Kcal},\RR^*_+\big)$ thanks to the implicit function theorem.
This concludes the proof of Proposition~\ref{prop3_cp}.
\end{proof}


\section*{Acknowledgments}   

The authors were supported by a DFG-CNRS collaborative grant between France and Germany
on ``Micro-Macro Modeling and Simulation of Liquid-Vapor Flows''.  
The third author was also supported 
by the Centre National de la Recherche Scientifique (CNRS) and 
the Agence Nationale de la Recherche (ANR) via the grant 06-2-134423.


\newcommand{\auth}{\textsc}

\end{document}